\documentclass[11pt,reqno]{amsart}
\usepackage[utf8]{inputenc}
\usepackage{amsthm,amsmath,amssymb,bm}
\usepackage[margin=1.15in]{geometry}
\usepackage{tikz}
\usetikzlibrary{decorations.pathreplacing,calc}
\usepackage[colorlinks=true,linkcolor=blue,citecolor=blue,urlcolor=blue]{hyperref}
\usepackage{mathtools}

\newtheorem{theorem}{Theorem}[section]
\newtheorem{lemma}[theorem]{Lemma}
\newtheorem{proposition}[theorem]{Proposition}
\newtheorem{corollary}[theorem]{Corollary}

\newtheorem{remark}[theorem]{Remark}
\newtheorem{example}[theorem]{Example}
\newtheorem{conjecture}[theorem]{Conjecture}

\newtheorem*{thmA}{Theorem A}
\newtheorem*{thmB}{Theorem B}
\newtheorem*{thmC}{Theorem C}

\newcommand{\R}{\mathbb{R}}
\newcommand{\lmax}{\lambda_{\max}}
\newcommand{\RG}{R_G}
\newcommand{\Nb}[1]{N[#1]}
\newcommand{\dx}{d_x}
\newcommand{\dy}{d_y}

\newcommand{\eps}{\varepsilon}

\begin{document}

\title[Discrete Einstein metrics on unicyclic graphs]{Discrete Einstein metrics on
unicyclic graphs}

\author{Shuliang Bai}
\thanks{Beijing Yanqi Lake Institute of Mathematical Sciences and Applications, Beijing, China.}

\date{\today}

\begin{abstract}
In earlier work with Cheng and Hua we showed that on a finite tree the discrete Einstein
metrics of the Lin--Lu--Yau curvature are the Perron eigenvector of an edge-indexed Ricci
matrix. We extend this theory to unicyclic graphs. We determine exactly when the tree picture
persists --- the balanced regime, where the spectrum becomes periodic rather than
Dirichlet-type --- and compute it in closed form for bare cycles and for regular suns (cycles
with pendant leaves); for a single decorated vertex on a long cycle it persists up to an
explicit golden-ratio threshold. Beyond this regime the problem is piecewise-linear, and
phenomena impossible on a tree appear: the Einstein metric can be non-unique, or absent --- a
triangle with a pendant leaf carries none. For the regular suns we prove that it exists and is
unique.
\end{abstract}

\keywords{Discrete Einstein metric; Ricci matrix; unicyclic graph; Ollivier--Lin--Lu--Yau
curvature; Perron--Frobenius; periodic Jacobi matrix; Chebyshev polynomials}

\maketitle

\section{Introduction}\label{sec:intro}

Ollivier \cite{Ollivier} and Lin--Lu--Yau \cite{LLY} defined Ricci curvature on graphs
through optimal transport, providing a discrete counterpart of Ricci curvature that has found
broad use in geometry and in network science, from Internet topology to community detection
\cite{NiLGGS}. Related developments include limit-free formulations of Ollivier--Ricci curvature
\cite{MW}, curvature-dimension inequalities and spectral characterisations
\cite{JostLiu,BauerJostLiu}, Bakry--\'Emery curvature and eigenvalue estimates
\cite{LinYau,CLP}, curvature from effective resistance \cite{DevriendtLambiotte},
sharp bounds on the curvature sum over weighted graphs \cite{bai2021ricci}, an
explicit curvature formula with applications to Bonnet--Myers sharp graphs
\cite{LiLu}, convergence of the curvature in random geometric graphs
\cite{vanderHoorn}, and applications to local clustering and robustness \cite{BCLMP}. Write $\kappa_e(w)$ for the Lin--Lu--Yau (LLY) curvature of an edge $e$ under a positive
weighting $w:E\to\R_{>0}$ (defined in Section~\ref{sec:prelim}). We call $w$ a
discrete Einstein metric, or say that it has constant curvature, if there is
a real number $\kappa$ with
\[
\kappa_e(w)=\kappa\qquad\text{for every edge }e\in E .
\]
Every edge carries a strictly positive weight, so this is a single condition imposed on all
edges simultaneously. It is the discrete analogue of the Einstein condition
$\mathrm{Ric}(g)=\kappa g$, and it is the fixed-point condition of the discrete Ricci flow
\cite{BLLWY}. On unweighted graphs, where $w\equiv1$ is fixed, constant-curvature
examples are classical: the cycles $C_g$ $(g\ge6)$ are Ricci-flat, and the Ricci-flat graphs
of maximum degree at most four have been completely classified \cite{LLY,LLYflat,BLYflat};
here, by
contrast, the weighting itself is the unknown, and the question is which weightings render the
curvature constant.

For finite trees, in joint work with Cheng and Hua \cite{BCH}, we showed that this
problem is completely solvable and linear. We introduced the Ricci matrix
$R_T\in\R^{E\times E}$,
\begin{equation}\label{eq:RT}
(R_T)_{e,e'}=
\begin{cases}
-\bigl(\tfrac1{\dx}+\tfrac1{\dy}\bigr), & e=e'=\{x,y\},\\[2pt]
\tfrac1{d_z}, & e\neq e',\ e\cap e'=\{z\},\\[2pt]
0, & e\cap e'=\varnothing,
\end{cases}
\end{equation}
proved that a positive weight $w$ is discrete Einstein of curvature $\kappa$ if and only
if $R_T w=\lmax(R_T)\,w$ and $\kappa=-\lmax(R_T)$, and deduced existence and uniqueness
from Perron--Frobenius theory, together with sharp bounds and a caterpillar
classification \cite{BCH,ChengHua,BCHsub,BCHleaf}. The reason is that on a tree
$\kappa_{xy}\,w_{xy}$ is linear in $w$, which is exactly what makes $R_T$ a
fixed matrix.

So far the theory has been confined to trees. We study the extension to a single cycle,
i.e.\ on unicyclic graphs (connected, $|E|=|V|$). This offers the first test of
whether the Ricci-matrix theory extends beyond trees.

\subsection*{Results} Fix a unicyclic graph $G$ with cycle
$C=(v_0,v_1,\dots,v_{g-1})$, cycle edges $c_1,\dots,c_g$ (with $c_j=\{v_{j-1},v_j\}$) and
perimeter $L=\sum_j w(c_j)$. Define $\RG$ by the same local formula \eqref{eq:RT}. Since
$G$ is connected, its line graph is connected, so $\RG+2I$ is irreducible nonnegative and
Perron--Frobenius gives a simple $\lmax(\RG)$ with positive eigenvector $w^\ast$
(Lemma~\ref{lem:pf}).

Our first result determines exactly when the tree matrix $\RG$ still computes the
curvature, and hence when its Perron vector is an Einstein metric.

\begin{thmA}
A cycle edge $c_j$ obeys the tree formula \eqref{eq:tree-kappa} if and only if its three
consecutive cycle edges satisfy $w(c_{j-1})+w(c_j)+w(c_{j+1})\le L/2$, while every pendant
edge obeys it unconditionally. Consequently the Perron eigenvector $w^\ast$ of $\RG$ is a
discrete Einstein metric, with constant curvature $\kappa=-\lmax(\RG)$, if and only if this
inequality holds at every cycle edge measured in $w^\ast$; the Einstein metric is then
unique up to scaling.
\end{thmA}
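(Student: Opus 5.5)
The plan is to work edge by edge with the Lin--Lu--Yau curvature in its limit-free (Münch--Wojciechowski) form \cite{MW}. By Kantorovich duality, $\kappa_{xy}(w)\,w_{xy}$ is a supremum/infimum over $1$-Lipschitz test functions $f$ (with respect to the path metric $d_w$) of a linear expression in the values $f(z)$ at neighbours $z$ of $x$ and $y$, weighted by $1/\dx$ and $1/\dy$. On a tree, \cite{BCH} shows the extremal $f$ is the ``signed distance across the cut'': $f(u)=-d_w(u,x)$ on the $x$-side of the edge and $f(v)=w_{xy}+d_w(y,v)$ on the $y$-side. Evaluating this $f$ gives exactly $\kappa_{xy}w_{xy}=-(R_T w)_{xy}$, i.e.\ \eqref{eq:tree-kappa}. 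I will redo this computation on $G$ and ask, for each edge, whether the same candidate $f$ is still $1$-Lipschitz and still optimal.

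\emph{Pendant edges.} Take $e=xy$ with $y$ a leaf. The only neighbour of $y$ is $x$, so the relevant configuration is the star of $x$. The candidate $f$ is built from $d_w(\cdot,x)$ together with the single value at $y$. Distances measured from the single vertex $x$ are honest graph distances whether or not the cycle is present, so $f$ is $1$-Lipschitz. The matching transport plan (move all mass through $x$) has cost equal to the dual value, so the two bounds coincide and the tree formula holds with no condition. Here I would check carefully that the definition of $\kappa$ used in Section~\ref{sec:prelim} evaluates neighbour distances as $d_w(x,z)$, and that nothing changes if a cycle edge at $x$ is not itself a geodesic.

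\emph{Cycle edges.} For $c_j=\{v_{j-1},v_j\}$, the only pair of neighbours whose distance can differ from the tree value is $u=v_{j-2}$ and $v=v_{j+1}$. Their tree-path length is $s_j:=w(c_{j-1})+w(c_j)+w(c_{j+1})$, while the other arc has length $L-s_j$. All other neighbour pairs are joined through $x$ or $y$ by a unique route, exactly as in a tree.
\begin{itemize}
\item If $s_j\le L/2$, then $d_w(u,v)=s_j$. The signed-distance $f$ is then $1$-Lipschitz; the critical check is $|f(v)-f(u)|=s_j\le d_w(u,v)$. The tree transport plan is admissible with the same cost, so primal and dual values agree and \eqref{eq:tree-kappa} holds.
\item If $s_j>L/2$, I will exhibit a strictly cheaper coupling that routes the $u\to v$ mass along the short arc of length $L-s_j<s_j$. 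This makes $W_1$ strictly smaller than the tree value, so $\kappa_{c_j}w_{c_j}>-(R_Gw)_{c_j}$ and the tree formula fails.
\end{itemize}
The main obstacle is this strictness in the second case. The optimal plan must actually send positive mass from $u$ to $v$; otherwise shortening $d(u,v)$ changes nothing. I would prove this from the structure of the tree-optimal plan in \cite{BCH}: every neighbour of $x$ other than $y$ must ship its $1/\dx$ mass to the $y$-side, and $v_{j+1}$ is the only $y$-side neighbour besides $x$ on the cycle route. A complementary-slackness argument should then show that no optimal plan avoids the $(u,v)$ pair whenever $s_j>L/2$. The degenerate short cycles $g=3,4$, where $u$ and $v$ coincide or are adjacent, need a separate direct check.

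The consequences then follow from Lemma~\ref{lem:pf}. If every cycle edge satisfies $s_j\le L/2$ in $w^\ast$, then every edge obeys the tree formula. Hence $\kappa_e(w^\ast)=-(R_Gw^\ast)_e/w^\ast_e=-\lmax(R_G)$ for all $e$, so $w^\ast$ is Einstein with $\kappa=-\lmax(\RG)$. Conversely, suppose $s_j>L/2$ at some $c_j$. By the strict inequality above, $\kappa_{c_j}(w^\ast)>-\lmax(\RG)$, so $w^\ast$ is not Einstein with that constant. For uniqueness, any Einstein metric $w$ lying in the balanced regime satisfies $\kappa\,w=-R_Gw$ with $w>0$. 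Since the positive eigenvector of the irreducible matrix $R_G+2I$ is unique up to scaling, $w$ is a multiple of $w^\ast$ and $\kappa=-\lmax(\RG)$.
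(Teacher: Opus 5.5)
Your architecture is the paper's. In the balanced case the distances on $\Nb{x}\times\Nb{y}$ coincide with those of a tree, so the tree optimum survives. The paper gets this from Lemma~\ref{lem:local} applied to the tree obtained by deleting a complementary-arc edge; your explicit plan/potential pair is an equivalent, self-contained certificate. In the unbalanced case the $x'\to y'$ mass is rerouted along the complementary arc, and Lemma~\ref{lem:pf} gives the statement about $w^\ast$.

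However, the step you single out as the main obstacle is aimed at a false target. After deleting a complementary-arc edge, $c_j$ becomes a bridge. This deletion is possible for every $g\ge4$, so $g=4$ needs no separate check. Then $\rho=\mu_x^\alpha-\mu_y^\alpha$ is positive on the $x$-side and negative on the $y$-side, and every cross distance splits as $d(a,x)+w_{xy}+d(y,b)$. Hence \emph{every} coupling of $\rho^+$ with $\rho^-$ has the same cost and is tree-optimal, including couplings that never touch $(u,v)$. So ``no optimal plan avoids $(u,v)$'' is false for the tree problem and cannot be obtained by complementary slackness. For the $G$-problem the same statement would only bound $W_{\mathrm{tree}}-W_G$ from above, which is the wrong direction.

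What you actually need is a single tree-optimal plan charging the pair, and that same additivity supplies it. Send $m=\eps\min(1/\dx,1/\dy)$ from $u$ to $v$, complete the coupling arbitrarily, and reroute in $G$ to get $W_G\le W_{\mathrm{tree}}-m(2s_j-L)$. The gain is of order $\eps$, so it survives the division in \eqref{eq:lly}. This is Lemma~\ref{lem:mono}, and it is exactly the paper's argument.

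The argument also needs $d(x,y)=w(c_j)$, i.e.\ $w(c_j)\le L/2$, which the paper states explicitly. Your claim that $(u,v)$ is the only pair whose distance can differ from its tree value fails off balance. For instance, $d(u,y)$ shortens once $w(c_{j-1})+w(c_j)>L/2$, and $(x,y)$ itself shortens if $w(c_j)>L/2$. The other pairs are harmless by monotonicity, but the normaliser is not.

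The check you defer for pendant edges does not come out as you expect. The curvature sees $d(x,z)$, not $w_{xz}$. If a cycle edge $xz$ at $x$ has $w_{xz}>L/2$, then $d(x,z)=L-w_{xz}<w_{xz}$ enters the certificate, and the tree formula written through $S_x$ fails. Compare case~(C) of Lemma~\ref{lem:ngcert}, where $\delta_0=a_{g-1}+P$ replaces $a_{g-1}+a_0$. So ``unconditionally'' must be read, as in Proposition~\ref{prop:balanced}, as ``for geodesic weightings''.

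This costs nothing for $\Leftarrow$, since balance gives $w^\ast(c_i)\le s_i\le L/2$. For $\Rightarrow$, if the violating edge were itself non-geodesic, its cycle neighbours are geodesic violators. The rerouting argument applied to one of them still yields an edge with curvature strictly above $-\lmax(\RG)$.

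Finally, like the paper, you treat only leaf edges. An interior pendant edge is a bridge, and the same additive-cost certificate gives the tree formula there, again under the geodesic proviso.
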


\noindent We call the length inequality the balanced regime. It is proved in
Proposition~\ref{prop:balanced} and Theorem~\ref{thm:einstein}; for a regular decorated
cycle it holds automatically once $g\ge6$ (Corollary~\ref{cor:sym}). In the balanced regime
the eigenproblem of $\RG$ reduces to a periodic transfer-matrix condition on the cycle.

\begin{thmB}
In the balanced regime the eigenvalue problem of $\RG$ decouples from the pendant trees. Each
pendant tree hanging off the cycle contributes, through a Schur complement, a single scalar
correction to the cycle diagonal at its attachment vertex, given explicitly by a continued
fraction in the spectral parameter $\lambda$. What remains is a periodic eigenvalue problem
along the cycle: encoding one step around the cycle as a $2\times2$ transfer matrix, a real
$\lambda$ that avoids the pendant-tree spectra is an eigenvalue of $\RG$ with eigenvector
supported on the cycle if and only if the ordered product of the $g$ transfer matrices around
the cycle has trace exactly $2$.
\end{thmB}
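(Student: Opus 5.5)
The plan is to treat $R_G$ as a block matrix on the decomposition $E=E_C\sqcup E_P$ into cycle edges and pendant edges, and to eliminate the pendant blocks by Schur complements. Each pendant tree $T_i$ hanging at a cycle vertex $v_i$ has edge set $E_{T_i}$, and these blocks are mutually non-adjacent in the line graph. The only coupling of $E_{T_i}$ to the cycle comes through the root edges of $T_i$ at $v_i$, and those touch only $c_i$ and $c_{i+1}$, with off-diagonal entries $1/d_{v_i}$. Write the eigen-equation $(R_G-\lambda)w=0$ blockwise. For $\lambda\notin\operatorname{spec}(R_{T_i}|_{E_{T_i}})$, where the latter denotes the principal submatrix, solve for $w|_{E_{T_i}}$ in terms of $w(c_i)+w(c_{i+1})$. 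Substituting back gives a rank-one correction $\sigma_i(\lambda)\,(e_{c_i}+e_{c_{i+1}})(e_{c_i}+e_{c_{i+1}})^{\top}$ at the attachment vertex, where $\sigma_i(\lambda)=d_{v_i}^{-2}\,\mathbf 1^{\top}(\lambda-R_{T_i})^{-1}\mathbf 1$ restricted to the root edges. Since the rows of $R_G$ for cycle edges sharing vertex $v_i$ carry the same $1/d_{v_i}$ structure, I will check that this correction amounts to replacing $1/d_{v_i}$ by an effective vertex weight. That replacement is the ``scalar correction at the attachment vertex.''

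For the continued fraction, I will recurse down the tree. Each subtree at a vertex $z$ is itself eliminated by the same Schur step, so the scalar at $z$ is $\bigl(\lambda+\tfrac1{d_z}+\tfrac1{d_{z'}}-\cdots\bigr)^{-1}$ of the child scalars. This is a finite continued fraction, by induction on tree depth. It is routine, and it mirrors the tree computations of \cite{BCH}.

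After elimination, the reduced cycle system is tridiagonal with a corner entry, i.e.\ periodic Jacobi. Row $c_j$ reads
\[
\beta_{j-1}\,w(c_{j-1})+(a_j-\lambda)\,w(c_j)+\beta_j\,w(c_{j+1})=0,
\]
with $\beta_j=1/d_{v_j}+\sigma_j(\lambda)$ on the off-diagonal and $a_j=-(1/d_{v_{j-1}}+1/d_{v_j})+\sigma_{j-1}+\sigma_j$ on the diagonal. The offdiagonals $\beta_j$ are nonzero for $\lambda$ off the exceptional set. Set $T_j=\begin{pmatrix}(\lambda-a_j)/\beta_j & -\beta_{j-1}/\beta_j\\ 1&0\end{pmatrix}$, so that $(w(c_{j+1}),w(c_j))^{\top}=T_j(w(c_j),w(c_{j-1}))^{\top}$. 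A solution on the cycle is then a vector fixed by $M=T_g\cdots T_1$, i.e.\ $M$ has eigenvalue $1$. Now $\det T_j=\beta_{j-1}/\beta_j$, and the product telescopes around the cycle to $\det M=1$. Hence $M$ has eigenvalue $1$ iff $\operatorname{tr}M=2$. Conversely, a fixed vector of $M$ gives a nonzero cycle vector, and the pendant components are reconstructed from the Schur formulas.

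The obstacle I expect is interpreting ``eigenvector supported on the cycle.'' This could mean one of two things: that the eigenvector is nonzero on the cycle, or that the reduced problem lives there. I would state it as the former and note that it follows because the pendant part is determined by the cycle part. A second point needs care: when $\beta_j(\lambda)=0$ the transfer matrix breaks down. Such $\lambda$ are to be excluded alongside the pendant spectra, or handled by showing they coincide with poles.

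Finally, the hypothesis of the balanced regime enters only through Theorem A. In that regime the curvature is computed by $R_G$, so this spectral problem is the Einstein problem.
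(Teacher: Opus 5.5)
Your proposal is correct and is essentially the paper's own argument (Lemma~\ref{lem:feedback} followed by Theorem~\ref{thm:reduction}). Your $\sigma_j$ equals $\Psi_j/d_{v_j}$, so your $\beta_j$ and $\lambda-a_j$ are exactly the paper's $P_j=(1+\Psi_j)/d_{v_j}$ and $Q_j$, and the telescoping $\det T_j=\beta_{j-1}/\beta_j$ and the step ``eigenvalue $1\Leftrightarrow$ trace $2$'' are the same. Three small points:
\begin{itemize}
\item The update $\sigma_j(e_{c_j}+e_{c_{j+1}})(e_{c_j}+e_{c_{j+1}})^{\top}$ is not a renormalised vertex weight. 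Changing $1/d_{v_j}$ to $1/d_{v_j}+\delta$ would add $-\delta(e_{c_j}-e_{c_{j+1}})(e_{c_j}-e_{c_{j+1}})^{\top}$, which moves the diagonal and off-diagonal in opposite directions. Your displayed row equation is nevertheless correct, so just drop that remark.
\item In the continued fraction, sibling edges at a vertex are mutually coupled. Each step must therefore sum over the children and then apply $h_u=\beta_u/(1-\beta_u)$; your single-reciprocal form is right only for a vertex with one child.
\item Your flag on $\beta_j(\lambda)=0$ is a genuine point that the paper does not address. For example, on $S_{g,3}$ one has $1+\Psi=0$ at $\lambda=-5/3$, which lies outside the pendant spectrum $\{-4/3\}$, so the transfer matrix is undefined there.
\end{itemize}
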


\noindent These are Lemma~\ref{lem:feedback} and Theorem~\ref{thm:reduction}, the periodic
 analogue of the Dirichlet--Chebyshev equation of the tree theory
\cite{BCH,BCHsub}. The reduction yields closed forms. The bare cycle $C_g$ has spectrum
$\{\cos(2\pi k/g)-1:k=0,\dots,g-1\}$ (Proposition~\ref{prop:bare}), and the regular sun
$S_{g,d}$, a cycle with $d-2$ pendant leaves at every vertex --- has
$\lmax(R_{S_{g,d}})=2\big(\sqrt{d-1}-1\big)/d$, independent of the girth, with curvature
$-\lmax\to0^-$ as $d\to\infty$ (Theorem~\ref{thm:sun}); the family is almost flat. The maximum
eigenvalue moreover obeys the same universal degree bound as on a tree
(Theorem~\ref{thm:degbound}), resting on the identity $|E|=|V|$, which orients every vertex to
a unique parent and removes the root exception of the tree theory.

Beyond the balanced regime the problem changes character. Once a cycle edge violates the
length condition, the route around the complementary arc is the shorter one and $\RG$ no
longer computes its curvature; the map $w\mapsto(\kappa_e w_e)_e$ is then only piecewise linear, with one linear
piece --- a \emph{transport regime} $\sigma$, carrying its own symmetric matrix $R^{(\sigma)}$ ---
for each set of violating edges. Finding an Einstein
metric becomes a self-consistent Perron problem: a positive weighting $w$ whose set of violating
edges is exactly some regime $\sigma$ and which is the Perron vector not of $\RG$ but of that
regime's own matrix $R^{(\sigma)}$. Existence is then not automatic,
and uniqueness must be decided across all regimes at once; we settle both for the regular suns
(Section~\ref{sec:exist}), and the general case remains open.

The balanced picture fails in two ways. For small girth the tree formula never applies: for
$g\le5$ no cycle edge obeys it, and the cycles $C_3,C_4,C_5$ carry distinct, explicitly
computed Ricci matrices, the $C_4$ matrix being reducible (Section~\ref{sec:short}). And
existence itself can fail: a tree always carries a unique
discrete Einstein metric \cite{BCH}, but a unicyclic graph need not carry one at all.

The triangle with a single pendant leaf admits no discrete Einstein metric: the edge opposite
the leaf is pinned at curvature $\tfrac32$, while the leaf edge stays strictly below $\tfrac43$,
so the two curvatures can never agree; it is the smallest such graph
(Example~\ref{ex:nonexist}). Symmetry, by
contrast, guarantees existence regardless of balance: the bare cycles and all regular suns,
of every girth, carry an Einstein metric (Example~\ref{ex:barecycles},
Proposition~\ref{prop:symexist}), and most other
unicyclic graphs do as well (Remark~\ref{rem:exist}). For the regular suns we prove the
sharpest statement of the paper.

\begin{thmC}
Every regular sun $S_{g,d}$,  a cycle of girth $g$ with $d-2$ pendant leaves attached at
each cycle vertex which carries a discrete Einstein metric, and it is unique up to scaling
across all transport regimes, not only within the balanced one.
\end{thmC}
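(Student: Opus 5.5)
Existence and uniqueness are proved separately. The key tool is the automorphism group of $S_{g,d}$, which acts transitively on cycle edges and on pendant edges.

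\emph{Existence.} I would use Proposition~\ref{prop:symexist}, or argue directly. Restrict to weightings invariant under the automorphism group: every cycle edge has weight $a$ and every leaf edge has weight $b$. Each cycle edge then has $w(c_{j-1})+w(c_j)+w(c_{j+1})=3a$ and $L=ga$. So the regime is determined by $g$ alone: balanced for $g\ge6$, and a single fixed transport regime $\sigma$ (all cycle edges violating) for $g\le5$. In that regime $\kappa_e w_e$ is linear in $w$, so the Einstein condition becomes a $2\times2$ eigenproblem on the quotient $(a,b)$. The quotient matrix has positive off-diagonal entries: leaf--cycle adjacency through the attachment vertex, of weight $1/d$. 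Its Perron vector $(a,b)$ is positive, and because the regime depends only on $g$, it is automatically consistent. For $g\ge6$ this is just the regular-sun Perron vector of Theorem~\ref{thm:sun}. For $g\le5$ I would use the explicit short-cycle matrices of Section~\ref{sec:short}.

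\emph{Uniqueness.} Let $w$ be any Einstein metric, in any regime. First, each leaf edge $e=\{x,\ell\}$ at a cycle vertex $x$ has curvature given by the tree formula, since pendant edges obey it unconditionally by Theorem~A. That formula expresses $\kappa w_e$ linearly in $w_e$ and in the other edges at $x$: the other leaves at $x$ and the two cycle edges. Equating $\kappa$ across the $d-2$ leaves at $x$ forces them to have equal weight $b_x$. Moreover, $b_x$ is determined by $\kappa$ and by $w(c_j)+w(c_{j+1})$, and it depends monotonically on that sum.

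Next I would show that the cycle weights are all equal. The plan is an extremal argument. Take a cycle edge $c_j$ of maximal weight. Being heaviest, it is the one most likely to violate the balance condition. In every regime, its curvature formula (the complementary-arc transport cost) should be monotone in the neighbouring weights, in the direction that makes $\kappa_{c_j}$ strictly different from $\kappa$ unless the neighbours are also maximal. Propagating around the cycle then gives all $w(c_j)$ equal. This reduces $w$ to the invariant form, and uniqueness of the Perron vector of the $2\times2$ quotient, together with simplicity of $\lmax$, finishes the proof up to scaling.

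\emph{Main obstacle.} The extremal step is where I expect the difficulty. The transport regimes mix in the global quantity $L$, so $\kappa_{c_j}w_{c_j}$ involves far-away edges. A max/min comparison must therefore control $L-w(c_{j-1})-w(c_j)-w(c_{j+1})$ as well. My first attempt would compare the maximal edge with the minimal one simultaneously. If $c_j$ is maximal and $c_k$ is minimal, then $c_k$ is balanced whenever $c_j$ is. Writing out both curvature expressions, with the $b_x$ eliminated via the leaf equations, I would show that $\kappa_{c_j}>\kappa_{c_k}$ unless $\max=\min$. If this fails for small $g$, I would fall back on case analysis for $g\le5$ using the explicit matrices of Section~\ref{sec:short}.
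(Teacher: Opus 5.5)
Your existence argument is sound and is the paper's Proposition~\ref{prop:symexist} in matrix form. A $2\times2$ quotient with positive off-diagonal entries has a positive eigenvector, which is equivalent to the paper's one-variable crossing $\Delta(a^\ast)=0$. One correction for $g\le5$: the bare matrices of Section~\ref{sec:short} must be rescaled by $2/d$ and coupled to the leaves. On the slice this gives $\kappa_{\mathrm{cyc}}=-2(d-2)/(ad)+(6-g)/d$.

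The uniqueness half has two genuine gaps.

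\textbf{Non-geodesic weightings.} You take from Theorem~A that leaf edges always obey the tree formula. This holds only for geodesic weightings (last paragraph of Proposition~\ref{prop:balanced}). ``All transport regimes'' includes weightings with a cycle edge $e_0$ of weight $a_0>L/2$. There the distance is the cut-tree metric, and $e_0$ is normalised by $P=L-a_0$. The leaf equation at its endpoints becomes $a_{g-1}+P=(4-\kappa d)b_0$, with no $a_0$ in it. The two neighbouring cycle rows flip sign in the outer weight: $-a_{g-2}$ in $\Psi_B$ instead of $+a_{g-2}$ (Lemma~\ref{lem:ngcert}). Neither your leaf step nor your extremal step covers this case. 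The paper needs a separate argument, Proposition~\ref{prop:nongeo}, to show the resulting system has no positive solution.

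\textbf{Mixed regimes.} This step is only a plan, and the mechanism you propose fails. By Lemma~\ref{lem:fmax}, a wrap edge has $\kappa_{c_j}a_j=\tfrac1d\bigl[3a_j-(L-T_j)-(d-2)(b_j+b_{j+1})\bigr]$. The two neighbouring weights cancel and the far arc replaces them, whereas in the balanced regime they enter with coefficient $-1/d$. So there is no regime-independent local monotonicity to propagate around the cycle.

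Your auxiliary claim that the lightest edge is balanced whenever the heaviest is, is also false in general. Balance is governed by the window $T_j$, not by $a_j$. For $g=7$ with cycle weights $(1,10,1,9.5,0.5,9.5,1)$, the heaviest edge has window $12\le L/2=16.25$, while the lightest has window $19.5>L/2$. Any such comparison would have to come from the Einstein equations, and you give no mechanism for that.

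\textbf{What the paper does instead.} It avoids extremal arguments and uses a global identity that controls $L$. Summing the reduced Einstein equations gives $\sum_{j\in S}T_j=\tfrac L2\bigl(\psi(c)+|S|\bigr)$, with $c=\kappa d$ and $\psi$ strictly increasing. The wrap edges therefore force $c>c_{\mathrm B}$, and the balanced edges force $c\ge c_{\mathrm W}$. The Collatz--Wielandt formula for the regime's Perron problem, tested on the symmetric vector, gives the opposite strict inequality. The remaining cases are handled separately:
\begin{itemize}
\item the all-balanced case, Proposition~\ref{prop:sunbalsym};
\item the all-wrap case, via the non-resonance $|\theta^\ast|>2$ (Remark~\ref{rem:sunrigid});
\item $g=3$, Proposition~\ref{prop:g3global};
\item the boundary girth $g=6$.
\end{itemize}
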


\noindent This combine existence (Proposition~\ref{prop:symexist}) with unconditional global
uniqueness (Theorem~\ref{thm:sununcond}). It is the first unicyclic family for which the
Einstein metric is pinned down beyond the balanced regime, settling both questions for the
suns; the general unicyclic case remains open.

The theory is nonetheless not a special case of the tree theory. Even in the balanced regime
$\RG$ acts on the line graph of a cyclic graph, so its spectrum is periodic: the bare cycle
gives $\cos(2\pi k/g)-1$, the cyclic counterpart of a path's Dirichlet spectrum, and the
regular sun a value independent of the girth. Phenomena that cannot occur on a tree also
arise: a reducible Ricci matrix, whose Einstein metric is not unique ($C_4$), and the
non-existence of any Einstein metric ($C_3$ with a leaf), whereas every tree carries exactly
one. The balanced condition is, moreover, only sufficient: short cycles are Einstein but never
balanced.

Finally, the balanced regime sharpens into an exact threshold for a single localized defect
--- a long cycle carrying one pendant tree at a single vertex. As the girth grows, the
spectral effect of the defect converges to a fixed positive limit
$\lambda_\infty:=\lim_{g\to\infty}\lmax(\RG)$. Writing $\mu>1$ for the root of
$\mu+\mu^{-1}=2(1+\lambda_\infty)$, the graph is balanced --- and the Perron metric $w^\ast$ a
discrete Einstein metric --- for all sufficiently large girth if and only if $\lambda_\infty$
lies below the golden-ratio threshold
\[
\varphi-\tfrac32=\tfrac{\sqrt5-2}{2},\qquad\varphi=\tfrac{1+\sqrt5}{2}
\]
(Lemma~\ref{lem:bound-state}, Proposition~\ref{prop:threshold}); above it $w^\ast$ is
unbalanced, so $\RG$ no longer computes the metric and existence becomes a separate question
(Remark~\ref{rem:threshold-exist}). The Perron-spread $\rho_G$ of
Proposition~\ref{prop:apriori} is the wrong quantity to control here, it grows like
$\mu^{g/2}$ (Remark~\ref{rem:rho-unbounded}), the operative one being the local defect
strength $\lambda_\infty$; for a general multi-vertex decoration we conjecture the same
threshold to be sufficient (Conjecture~\ref{conj:general-threshold}).

\subsection*{Organization}
Section~\ref{sec:prelim} fixes notation and reviews the Lin--Lu--Yau curvature and the Ricci
matrix. Section~\ref{sec:balanced} establishes the balanced regime and proves Theorem~A.
Section~\ref{sec:reduction} develops the periodic reduction and proves Theorem~B.
Section~\ref{sec:closed} derives the closed forms for bare cycles and regular suns, the
universal degree bound, and the single-vertex defect threshold. Section~\ref{sec:short} treats
the short cycles $C_3,C_4,C_5$. Section~\ref{sec:exist} proves Theorem~C together with the
non-existence example and discusses existence. The appendix gives Kantorovich certificates for the
non-geodesic curvature formulas.

\section{Preliminaries}\label{sec:prelim}

Let $G=(V,E,w)$ be a finite connected graph with edge weights $w:E\to\R_{>0}$; write
$w_{xy}=w(\{x,y\})$, let $d_v$ be the combinatorial degree of $v$, and set
$S_v=\sum_{u\sim v}w_{vu}$.

We write $d(x,y)$ for the \emph{weighted distance}, the least total weight of a path
joining $x$ and $y$,
\[
d(x,y)=\min_{P:\,x\rightsquigarrow y}\ \sum_{e\in P}w_e .
\]
For adjacent $x,y$ this is $\min\bigl(w_{xy},\ \text{length of any other route}\bigr)$; on
a tree $d(x,y)=w_{xy}$, but on a graph with a short cycle a route around the cycle
can be shorter; this is the source of the phenomena below.

For an idleness $\alpha\in[0,1]$ we use the lazy one-step random-walk measure at $x$, which
keeps mass $\alpha$ at $x$ and spreads the rest uniformly over the neighbours,
\begin{equation}\label{eq:measure}
\mu_x^\alpha(z)=
\begin{cases}
\alpha, & z=x,\\[2pt]
\dfrac{1-\alpha}{d_x}, & z\sim x,\\[4pt]
0, & \text{otherwise.}
\end{cases}
\end{equation}

For two probability measures $\mu,\nu$ on $V$ the ($L^1$-)Wasserstein distance is, by
Kantorovich duality,
\begin{equation}\label{eq:wasserstein}
W(\mu,\nu)=\min_{\pi}\ \sum_{a,b}\pi(a,b)\,d(a,b)
=\max_{\substack{f:V\to\R\\ 1\text{-Lipschitz}}}\ \sum_{z}f(z)\,\bigl(\mu-\nu\bigr)(z),
\end{equation}
the minimum over couplings $\pi\ge0$ with marginals $\mu,\nu$ (transport plans), and the
maximum over $f$ with $|f(a)-f(b)|\le d(a,b)$ for all $a,b$ (Kantorovich potentials).

The $\alpha$-Ollivier curvature of an edge
$x\sim y$ is $\kappa_\alpha(x,y)=1-W(\mu_x^\alpha,\mu_y^\alpha)/d(x,y)$, and the
Lin--Lu--Yau curvature is the limiting slope as $\alpha\to1$,
\begin{equation}\label{eq:lly}
\kappa_{xy}=\lim_{\alpha\to1}\frac{\kappa_\alpha(x,y)}{1-\alpha}
\end{equation}
(the ratio is constant on an interval $[\alpha_0,1]$, so the limit exists \cite{LLY}). On
a tree it has the closed form
\begin{equation}\label{eq:tree-kappa}
\kappa_{xy}=-\Big(\frac{S_x-2w_{xy}}{w_{xy}\dx}+\frac{S_y-2w_{xy}}{w_{xy}\dy}\Big),
\end{equation}
and multiplying by $w_{xy}$ makes the right-hand side linear in $w$, which yields
the Ricci matrix \eqref{eq:RT} \cite{BCH}. We call \eqref{eq:tree-kappa} the tree
formula.

A connected graph is unicyclic if $|E|=|V|$; equivalently, it contains exactly one
cycle. Concretely (Figure~\ref{fig:unicyclic}), a unicyclic graph is a single cycle
$C=(v_0,\dots,v_{g-1})$ of girth $g\ge3$ together with arbitrary rooted trees
attached at the cycle vertices --- at a given vertex the attached tree may be empty, a
single pendant leaf, or a larger branch. Deleting the $g$ cycle edges leaves exactly
these pendant trees, each rooted at a cycle vertex, and $C$ is the $2$-core of
$G$. Thus a tree (no cycle) and a bare cycle (no pendants) are the two extremes, and a
general unicyclic graph interpolates between them; ``unicyclic'' does not mean
``just a cycle''. We write $R_G$ for the matrix \eqref{eq:RT} on $E(G)$, and
$\lmax:=\lmax(R_G)$.

\begin{figure}[t]
\centering
\begin{tikzpicture}[scale=1.15,
  cv/.style={circle,draw,fill=black,inner sep=1.7pt},
  lf/.style={circle,draw,fill=white,inner sep=1.4pt}]
 \foreach \i in {0,...,5} {\coordinate (v\i) at ({90+60*\i}:1.5);}
 \foreach \i in {0,...,5} {\node[cv] (n\i) at (v\i) {};}
 \draw[very thick] (n0)--(n1)--(n2)--(n3)--(n4)--(n5)--(n0);
 \node at (0,0) {\small cycle $C$};
 \node[lf] (a1) at ($(n1)+(150:0.85)$) {}; \draw (n1)--(a1);
 \node[lf] (b1) at ($(n2)+(215:0.85)$) {}; \node[lf] (b2) at ($(n2)+(160:0.85)$) {};
 \draw (n2)--(b1); \draw (n2)--(b2);
 \node[lf] (c1) at ($(n3)+(255:0.85)$) {}; \draw (n3)--(c1);
 \node[lf] (c2) at ($(c1)+(220:0.75)$) {}; \node[lf] (c3) at ($(c1)+(300:0.75)$) {};
 \draw (c1)--(c2); \draw (c1)--(c3);
 \node[lf] (d1) at ($(n4)+(300:0.85)$) {}; \draw (n4)--(d1);
\end{tikzpicture}
\caption{A unicyclic graph: one cycle $C$ (bold edges, filled vertices) with rooted trees
attached at its vertices (white vertices) --- here none at two vertices, a single leaf at
two, two leaves at one, and a depth-$2$ branch at one. Deleting the cycle edges leaves
these pendant trees.}
\label{fig:unicyclic}
\end{figure}

\begin{lemma}\label{lem:pf}
Let $G$ be connected with $|E|\ge2$. Then $\RG+2I$ is entrywise nonnegative and
irreducible; hence $\lmax(\RG)$ is a simple eigenvalue with a strictly positive
eigenvector $w^\ast$, unique up to scaling, and it is the only eigenvalue with a positive
eigenvector.
\end{lemma}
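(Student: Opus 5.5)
The plan is to apply Perron--Frobenius to the shifted matrix $\RG+2I$; the only work is checking nonnegativity and irreducibility directly from the local formula \eqref{eq:RT}.

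\emph{Nonnegativity.} Off the diagonal, every entry of $\RG$ is either $1/d_z>0$ (when the edges share the vertex $z$) or $0$. On the diagonal, the entry at $e=\{x,y\}$ is $-(1/\dx+1/\dy)$. Since $\dx,\dy\ge1$, this is at least $-2$, so $(\RG+2I)_{ee}=2-1/\dx-1/\dy\ge0$. Hence $\RG+2I$ is entrywise nonnegative.

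\emph{Irreducibility.} The off-diagonal support of $\RG$ is exactly the adjacency relation of the line graph $L(G)$: two distinct edges have a nonzero entry if and only if they share an endpoint. Because $G$ is connected and $|E|\ge2$, $L(G)$ is connected. Indeed, for any two edges $e,e'$, take a path in $G$ from an endpoint of $e$ to an endpoint of $e'$; its consecutive edges, together with $e$ and $e'$, form a walk in $L(G)$. A nonnegative matrix whose associated digraph (here symmetric, equal to $L(G)$ plus loops) is strongly connected is irreducible.

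\emph{Perron--Frobenius.} Applied to the irreducible nonnegative matrix $\RG+2I$, the theorem gives that its spectral radius $\rho$ is a simple eigenvalue with a strictly positive eigenvector $w^\ast$, unique up to scaling. Since $\RG$ is symmetric, its spectrum is real, and the spectrum of $\RG+2I$ is the spectrum of $\RG$ shifted by $2$. The Perron root $\rho$ is the largest eigenvalue of $\RG+2I$, so $\rho=\lmax(\RG)+2$. Therefore $\lmax(\RG)$ is simple, and $w^\ast$ is the corresponding positive eigenvector.

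\emph{Only eigenvalue with a positive eigenvector.} Suppose $\RG v=\lambda v$ with $v>0$ and $\lambda\ne\lmax$. Since $\RG$ is symmetric, eigenvectors for distinct eigenvalues are orthogonal, so $\langle v,w^\ast\rangle=0$. But both vectors are strictly positive, so this inner product is strictly positive, a contradiction. Thus $\lambda=\lmax$.

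None of these steps is a real obstacle. The only point that needs care is the diagonal bound, which requires $d\ge1$ at every vertex; this holds because $G$ is connected with at least one edge. The hypothesis $|E|\ge2$ is used only so that $L(G)$ has at least two vertices, although the statement would hold trivially for a single edge as well.
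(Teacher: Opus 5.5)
Your proof is correct and follows the paper's argument: the diagonal bound $-(1/\dx+1/\dy)\ge-2$ gives nonnegativity, connectivity of $L(G)$ gives irreducibility, and Perron--Frobenius for $\RG+2I$ carries over to $\RG$ because the shift by $2I$ does not change eigenvectors. The one addition is your explicit orthogonality argument, using the symmetry of $\RG$, for the clause that no other eigenvalue has a positive eigenvector; the paper simply takes that clause from the Perron--Frobenius theorem itself.
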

\begin{proof}
Each diagonal entry of $\RG$ is $-(1/\dx+1/\dy)\ge-2$, so $\RG+2I\ge0$; off-diagonal
entries are $1/d_z\ge0$. Two edges have a nonzero off-diagonal entry iff they are
adjacent in the line graph $L(G)$. Since $G$ is connected, $L(G)$ is connected, so
$\RG+2I$ is irreducible. Perron--Frobenius applies; shifting by $-2I$ preserves
eigenvectors.
\end{proof}

The one input we need about $\kappa$ is its locality, immediate from the definition.

\begin{lemma}\label{lem:local}
For an edge $\{x,y\}$, the curvature $\kappa_{xy}$ is a function only of $\dx,\dy$ and of
the matrix of pairwise distances $\big(d(a,b)\big)_{a\in\Nb{x},\,b\in\Nb{y}}$, where
$\Nb{x}=\{x\}\cup\{z:z\sim x\}$.
\end{lemma}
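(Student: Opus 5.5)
The plan is to unwind the definition \eqref{eq:lly} and track exactly which data enter. Fix the edge $\{x,y\}$ and $\alpha\in[0,1]$. By \eqref{eq:measure}, the measure $\mu_x^\alpha$ is supported on $\Nb{x}$, and its values there depend only on $\alpha$ and $\dx$: mass $\alpha$ at $x$, and $(1-\alpha)/\dx$ at each neighbour. The same holds for $\mu_y^\alpha$ on $\Nb{y}$ with $\dy$. The first step is therefore to note that the pair of measures, up to relabelling of vertices, is determined by $(\alpha,\dx,\dy)$ together with the incidence pattern: which points of $\Nb{x}$ and $\Nb{y}$ coincide. I will encode that pattern through the distance matrix itself, since $a=b$ exactly when $d(a,b)=0$. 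Weights are positive, so $d$ is a metric.

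Next I will use the primal form of \eqref{eq:wasserstein}. Any coupling $\pi$ of $\mu_x^\alpha$ and $\mu_y^\alpha$ has $\pi(a,b)=0$ unless $a\in\operatorname{supp}\mu_x^\alpha\subseteq\Nb{x}$ and $b\in\operatorname{supp}\mu_y^\alpha\subseteq\Nb{y}$. The minimization is therefore a finite linear program. Its variables are $\pi(a,b)$ for $(a,b)\in\Nb{x}\times\Nb{y}$, its constraints are the marginal constraints, and its cost coefficients are $d(a,b)$ over the same index set. So $W(\mu_x^\alpha,\mu_y^\alpha)$ is a function of $\alpha,\dx,\dy$ and the matrix $\big(d(a,b)\big)_{a\in\Nb{x},\,b\in\Nb{y}}$ alone. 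The denominator $d(x,y)$ is itself an entry of that matrix, because $x\in\Nb{x}$ and $y\in\Nb{y}$. It follows that $\kappa_\alpha(x,y)=1-W/d(x,y)$ depends only on $\alpha$ and the stated data.

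Finally, I pass to the limit. For each $\alpha<1$, the ratio $\kappa_\alpha(x,y)/(1-\alpha)$ is a function of $\alpha$ and the stated data. The limit $\alpha\to1$ exists by \cite{LLY}, so the limit is also a function of the stated data only.

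The one delicate point is the marginal constraints. They must be expressible without reference to the rest of $G$, and this requires knowing which vertices of $\Nb{x}$ coincide with vertices of $\Nb{y}$, for instance $x$ itself, $y$ itself, and common neighbours in a triangle. I handle this by the zero-distance observation: two vertices $a\in\Nb{x}$ and $b\in\Nb{y}$ are equal iff $d(a,b)=0$. The identification is therefore read off from the distance matrix, and no other feature of $G$ enters. This is the step I would check most carefully; the rest is bookkeeping.
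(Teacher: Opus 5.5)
Your argument is correct and is essentially the paper's proof. Both note that $\mu_x^\alpha,\mu_y^\alpha$ live on $\Nb{x},\Nb{y}$ with masses fixed by $\alpha,\dx,\dy$, that $W$ is then a finite linear program whose only costs are the $d(a,b)$ on $\Nb{x}\times\Nb{y}$ (with $d(x,y)$ among them), and then pass to the limit $\alpha\to1$. Your ``delicate point'' is harmless but not actually needed: in the primal bipartite program the row constraints are indexed by $\Nb{x}$ and the column constraints by $\Nb{y}$ separately, so coincident vertices never have to be identified, and they enter only through the zero cost entries $d(a,a)=0$.
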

\begin{proof}
$\mu_x^\alpha,\mu_y^\alpha$ are supported on $\Nb{x},\Nb{y}$ with masses determined by
$\alpha,\dx,\dy$. The transport cost $W(\mu_x^\alpha,\mu_y^\alpha)$ is the optimum of a
linear program whose only data are these masses and the costs $d(a,b)$ for
$a\in\Nb{x},b\in\Nb{y}$; and $d(x,y)$ is one of these. Thus $\kappa_\alpha(x,y)$, and its
$\alpha\to1$ limit $\kappa_{xy}$, depend only on the stated data.
\end{proof}

\begin{lemma}\label{lem:mono}
Fix an edge $\{x,y\}$ and, by Lemma~\ref{lem:local}, regard $\kappa_{xy}$ as a function of
the whole distance matrix $D=\big(d(a,b)\big)_{a\in\Nb x,\,b\in\Nb y}$. Let $D'$ be a second
such matrix with
\[
D'(a,b)\le D(a,b)\ \ \text{for all pairs }(a,b),\qquad D'(x,y)=D(x,y),
\]
and write $W,W'$ and $\kappa_\alpha,\kappa'_\alpha$ for the corresponding transport costs and
$\alpha$-curvatures. Then $W'\le W$ for every $\alpha$, hence $\kappa'_\alpha\ge\kappa_\alpha$
and $\kappa'_{xy}\ge\kappa_{xy}$: shortening any neighbour distances, with $d(x,y)$ fixed,
cannot lower the curvature. Quantitatively, if some $W$-optimal plan $\pi^\ast$ places mass
$m(\alpha)$ on a pair $(a_0,b_0)$ with $D'(a_0,b_0)<D(a_0,b_0)$, then
\begin{equation}\label{eq:mono-quant}
\kappa'_\alpha-\kappa_\alpha\ \ge\ \frac{m(\alpha)\,\bigl(D(a_0,b_0)-D'(a_0,b_0)\bigr)}{d(x,y)}\,;
\end{equation}
in particular $\kappa'_{xy}>\kappa_{xy}$ whenever $\liminf_{\alpha\to1}m(\alpha)/(1-\alpha)>0$.
\end{lemma}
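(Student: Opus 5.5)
The plan is to compare the two transport problems directly through a single feasible plan, and then pass to the limit $\alpha\to1$. The measures $\mu_x^\alpha,\mu_y^\alpha$ do not depend on the distance matrix. So the set of couplings $\pi$ between them is the same for $D$ and for $D'$, and only the cost $\sum_{a,b}\pi(a,b)D(a,b)$ changes. Take an optimal plan $\pi^\ast$ for $D$. It is feasible for $D'$, which gives
\[
W'\le\sum_{a,b}\pi^\ast(a,b)D'(a,b)=W-\sum_{a,b}\pi^\ast(a,b)\bigl(D(a,b)-D'(a,b)\bigr).
\]
Every term in the last sum is nonnegative because $D'\le D$ entrywise. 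This yields $W'\le W$. Dropping all terms except the pair $(a_0,b_0)$ yields the sharper bound $W'\le W-m(\alpha)\bigl(D(a_0,b_0)-D'(a_0,b_0)\bigr)$.

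Next, convert this to curvatures. The hypothesis $D'(x,y)=D(x,y)=d(x,y)$ is needed exactly here: it makes the denominators in $\kappa_\alpha=1-W/d(x,y)$ and $\kappa'_\alpha=1-W'/d(x,y)$ agree. Subtracting gives
\[
\kappa'_\alpha-\kappa_\alpha=\frac{W-W'}{d(x,y)},
\]
which is $\ge0$ and, by the previous step, at least the right-hand side of \eqref{eq:mono-quant}.

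Finally, divide by $1-\alpha>0$ and let $\alpha\to1$. Both limits $\kappa_{xy}$ and $\kappa'_{xy}$ exist by \eqref{eq:lly}, since the ratio is eventually constant in each case, so $\kappa'_{xy}\ge\kappa_{xy}$. For the strict version, take $\liminf$ of
\[
\frac{\kappa'_\alpha-\kappa_\alpha}{1-\alpha}\ \ge\ \frac{m(\alpha)}{1-\alpha}\cdot\frac{D(a_0,b_0)-D'(a_0,b_0)}{d(x,y)}.
\]
The left side converges to $\kappa'_{xy}-\kappa_{xy}$, and the right side has positive $\liminf$ by assumption, so the inequality is strict.

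The one point needing care is that $D'$ need not be a genuine distance matrix. It is only an array of costs indexed by $N[x]\times N[y]$, so the primal linear-program formulation from the proof of Lemma~\ref{lem:local} must be used rather than the Kantorovich dual. I will note that the limit defining $\kappa'_{xy}$ exists for such an abstract cost matrix: the optimal value of the transport program is piecewise linear in $\alpha$, and at $\alpha=1$ both measures equal $\delta_x,\delta_y$, so the ratio is eventually constant as in \cite{LLY}. Apart from this remark, there is no real obstacle. The whole argument is the observation that the feasible sets coincide while the cost only decreases.
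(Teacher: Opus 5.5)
Your proof is correct and follows essentially the same route as the paper. Both arguments note that the coupling polytope is common to $D$ and $D'$, evaluate a $W$-optimal plan against $D'$ to get $W'\le W-m(\alpha)\bigl(D(a_0,b_0)-D'(a_0,b_0)\bigr)$, use $D'(x,y)=D(x,y)$ to share the normaliser, and divide by $1-\alpha$ before passing to the limit. Your extra remark that the limit defining $\kappa'_{xy}$ exists for an abstract cost array fills a point the paper leaves implicit: the LP value is piecewise linear in $\alpha$ and equals $d(x,y)$ at $\alpha=1$.
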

\begin{proof}
The comparison uses all pairwise distances at once, through the cost of a transport plan.
For fixed marginals $\mu_x^\alpha,\mu_y^\alpha$ the coupling
polytope $\Pi=\{\pi\ge0:\ \pi\ \text{has marginals}\ \mu_x^\alpha,\mu_y^\alpha\}$ depends only
on the masses, not on the costs. For every $\pi\in\Pi$, since $\pi\ge0$ and $D'\le D$
entrywise,
\[
\langle\pi,D'\rangle=\sum_{a\in\Nb x,\,b\in\Nb y}\pi(a,b)\,D'(a,b)
\ \le\ \sum_{a,b}\pi(a,b)\,D(a,b)=\langle\pi,D\rangle .
\]
Minimising over the common polytope $\Pi$ gives
$W'=\min_{\pi}\langle\pi,D'\rangle\le\min_{\pi}\langle\pi,D\rangle=W$. For the quantitative
statement, let $\pi^\ast$ be $W$-optimal; evaluating the same plan against $D'$ and
keeping only the $(a_0,b_0)$ term,
\[
W'\le\langle\pi^\ast,D'\rangle
=W-\sum_{a,b}\pi^\ast(a,b)\bigl(D(a,b)-D'(a,b)\bigr)
\le W-m(\alpha)\bigl(D(a_0,b_0)-D'(a_0,b_0)\bigr),
\]
all summands being $\ge0$. Since $D'(x,y)=D(x,y)$, dividing $W-W'$ by the fixed normaliser
$d(x,y)$ yields $\kappa'_\alpha-\kappa_\alpha=(W-W')/d(x,y)$ and hence
\eqref{eq:mono-quant}. Finally $\kappa_{xy}=\lim_{\alpha\to1}\kappa_\alpha/(1-\alpha)$, so
dividing \eqref{eq:mono-quant} by $1-\alpha$ and passing to the limit gives
$\kappa'_{xy}-\kappa_{xy}\ge \big(\liminf_\alpha \tfrac{m(\alpha)}{1-\alpha}\big)\cdot
(D-D')(a_0,b_0)/d(x,y)$, which is positive under the stated condition.
\end{proof}

\section{The balanced regime}\label{sec:balanced}

On a unicyclic graph the curvature is not in general given by the tree formula. A pendant edge
is never affected by the cycle, so its curvature is always \eqref{eq:tree-kappa}. A cycle edge
is different: besides the edge itself, the two endpoints are joined by a second route running
the other way around the cycle --- along the \emph{complementary arc}, the cycle with the edge
removed --- which can shorten a transport distance and thereby, by Lemma~\ref{lem:mono}, raise
the curvature. Whether the
tree formula still holds
on a cycle edge is governed by a metric (weight) condition, which we state first and only
then read off the curvature. The mechanism is simply that deleting an edge can only lower
a distance $d(a,b)$ by supplying a path around the cycle.

\begin{proposition}\label{prop:balanced}
Let $c_j=\{x,y\}$, $x=v_{j-1}$, $y=v_j$, be a cycle edge of a unicyclic graph with cycle
perimeter $L=\sum_i w(c_i)$, and let $x'=v_{j-2}$, $y'=v_{j+1}$ be the two outer
cycle neighbours of $x,y$. Consider the weight condition
\begin{equation}\label{eq:3sum}
w(c_{j-1})+w(c_j)+w(c_{j+1})\ \le\ \frac{L}{2}
\end{equation}
(the three consecutive cycle edges centred at $c_j$ occupy at most half of the cycle).
Then:
\begin{enumerate}
\item[\rm(i)] if \eqref{eq:3sum} holds, the curvature $\kappa_{c_j}$ is given by the tree
formula \eqref{eq:tree-kappa};
\item[\rm(ii)] if \eqref{eq:3sum} fails, the complementary arc strictly shortens the
distance $d(x',y')$, and $\kappa_{c_j}$ is strictly larger than the value given by
\eqref{eq:tree-kappa}.
\end{enumerate}
Every pendant (non-cycle) edge obeys the tree formula \eqref{eq:tree-kappa} whenever the
weighting is geodesic --- no cycle edge exceeds $\tfrac12 L$ --- as it is throughout the
non-degenerate setting considered here (a cycle edge above half the perimeter is degenerate
and excluded, cf.\ case (ii) and Section~\ref{sec:exist}).
\end{proposition}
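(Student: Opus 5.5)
Via Lemma~\ref{lem:local}, I would reduce everything to the distance matrix $D=\bigl(d(a,b)\bigr)_{a\in\Nb x,\,b\in\Nb y}$ for $c_j=\{x,y\}$. I compare $D$ with the ``tree'' matrix $D^T$ that the same neighbourhoods would have if the cycle were cut open away from $c_j$. In $D^T$, for $a\sim x$ and $b\sim y$ with $a,b\notin\{x,y\}$, we have $D^T(a,b)=w_{ax}+w_{xy}+w_{yb}$, together with $D^T(x,b)=w_{xy}+w_{yb}$, $D^T(a,y)=w_{ax}+w_{xy}$ and $D^T(x,y)=w_{xy}$. The tree formula \eqref{eq:tree-kappa} is exactly the LLY curvature computed from $D^T$, by \cite{BCH}, since on a tree the optimal transport is determined by these data alone. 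So it suffices to decide when $D=D^T$, and, when they differ, to show that the difference strictly raises $\kappa$ through Lemma~\ref{lem:mono}. Note that $D\le D^T$ always holds, because each $D^T$ entry is the length of an actual path in $G$.

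I would enumerate which entries can be shortened by the complementary arc. Any alternative route from $\Nb x$ to $\Nb y$ must go around the cycle, and the only neighbours of $x$ and $y$ on the cycle are $x'$ and $y'$. Pendant neighbours enter only through $x$ or $y$, so their entries are shortened only if $d(x,y)$, $d(x,y')$ or $d(x',y)$ is. The relevant comparisons are therefore: $w(c_j)$ against $L-w(c_j)$; $w(c_j)+w(c_{j+1})$ against $L-w(c_j)-w(c_{j+1})$, and symmetrically for $w(c_{j-1})+w(c_j)$; and $w(c_{j-1})+w(c_j)+w(c_{j+1})$ against $L-(\text{that sum})$. The three-edge sum dominates the other left-hand sides, so \eqref{eq:3sum} implies all the others. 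Hence under \eqref{eq:3sum} we get $D=D^T$, and part (i) follows by locality.

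For part (ii), suppose \eqref{eq:3sum} fails but the weighting is still geodesic enough that $d(x,y)=w_{xy}$. Then $D(x',y')<D^T(x',y')$, while $D(x,y)=D^T(x,y)$. This is precisely the setting of Lemma~\ref{lem:mono} with $D'=D$ and $D^T$ in the role of $D$. It remains to show that some $D^T$-optimal plan moves mass of order $1-\alpha$ from $x'$ to $y'$. For $\alpha$ near $1$, the tree-optimal transport on a tree must send the neighbour mass $(1-\alpha)/\dx$ at $x'$ across the edge to $y$'s side. I would construct an explicit optimal plan: send $x'\mapsto y'$ with mass $\min(1/\dx,1/\dy)(1-\alpha)$, and route the remaining mass as in \cite{BCH}. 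Its optimality would be certified by a tree Kantorovich potential, namely $f$ equal to signed distance from the edge midpoint, which is $1$-Lipschitz for $D^T$. Complementary slackness holds on the pair $(x',y')$ because $f(y')-f(x')=D^T(x',y')$. This gives $\liminf m(\alpha)/(1-\alpha)>0$, and \eqref{eq:mono-quant} yields the strict inequality. I expect this step to be the main obstacle, since it requires checking that the chosen plan is genuinely optimal for the tree costs and not merely feasible.

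For pendant edges $\{x,y\}$ with $y$ a non-cycle vertex, every path from $\Nb x$ to $\Nb y$ enters the branch at $y$ through $x$ alone, so going around the cycle only adds length. Under the geodesic assumption, $d$ among the cycle neighbours of $x$ equals the weight of the edge joining them, so no entry of $D$ changes. The tree formula then holds by the same locality argument.
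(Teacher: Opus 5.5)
Your proposal is correct and follows the paper's proof. You compare $N[x]\times N[y]$ distances with the deleted-arc tree; (i) then follows by locality, because the three-edge through-arc dominates every other route, and (ii) follows from Lemma~\ref{lem:mono} via a tree-optimal plan carrying mass $\min(1/d_x,1/d_y)(1-\alpha)$ from $x'$ to $y'$. As in the paper, this implicitly needs $g\ge4$ so that the cut edge can be taken on the complementary arc. Your midpoint-potential certificate for that plan, where any plan moving mass only from $x$'s side to $y$'s side is tight, and your handling of non-leaf pendant edges fill in details the paper leaves implicit.
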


\begin{figure}[t]
\centering
\begin{tikzpicture}[scale=1.3,
  cv/.style={circle,draw,fill=black,inner sep=1.5pt},
  ot/.style={->,>=stealth,line width=0.9pt}]
 \node[cv,label=above left:{$x'{=}v_{j-2}$}] (xp) at (140:2) {};
 \node[cv,label=above:{$x{=}v_{j-1}$}] (x) at (100:2) {};
 \node[cv,label=above:{$y{=}v_{j}$}] (y) at (60:2) {};
 \node[cv,label=above right:{$y'{=}v_{j+1}$}] (yp) at (20:2) {};
 \foreach \a in {330,290,250,210} {\node[cv] (\a) at (\a:2) {};}
 \draw[line width=1pt] (xp)--(x) node[midway,above left,font=\scriptsize]{$c_{j-1}$};
 \draw[line width=2.4pt] (x)--(y) node[midway,above,font=\scriptsize]{$c_j$};
 \draw[line width=1pt] (y)--(yp) node[midway,above right,font=\scriptsize]{$c_{j+1}$};
 \draw[gray,dashed,line width=0.8pt] (yp)--(330)--(290)--(250)--(210)--(xp);
 \draw[ot,blue!70] ($(x)+(0,-0.26)$) to[bend right=22]
   node[below,font=\scriptsize]{$A$} ($(y)+(0,-0.26)$);
 \draw[ot,red!75,densely dashed] (xp) to[bend right=12]
   node[pos=0.83,below=1pt,font=\scriptsize]{$B$} (yp);
 \node[font=\scriptsize] at (90:1.08) {through arc};
 \node[font=\scriptsize,gray] at (262:1.18) {complementary arc: $L-(\text{through})$};
\end{tikzpicture}
\caption{Proposition~\ref{prop:balanced}: the two routes between the outer neighbours
$x',y'$ of a cycle edge $c_j$ --- the through arc (bold) and the complementary arc (dashed).
The tree formula holds at $c_j$ iff the through arc is the geodesic.}
\label{fig:balanced}
\end{figure}

\begin{proof}
Write $\varepsilon=1-\alpha$ and let $T:=w(c_{j-1})+w(c_j)+w(c_{j+1})$ be the length of the
through-arc $x'\!-\!x\!-\!y\!-\!y'$. The two routes between $x'$ and $y'$ have lengths $T$
and $L-T$ (Figure~\ref{fig:balanced}), so
\begin{equation}\label{eq:dxpyp}
d(x',y')=\min(T,\,L-T),
\end{equation}
and the weight condition \eqref{eq:3sum} reads $T\le L/2$. Compare $G$ with the
deleted-arc tree, obtained by deleting any one cycle edge of the complementary arc
(one exists since $g\ge4$; for $g=3$ the criterion degenerates, Remark~\ref{rem:g3}). This
keeps $c_{j-1},c_j,c_{j+1}$ and all pendant trees intact, and on a tree the curvature of
$c_j$ is the tree formula \eqref{eq:tree-kappa} \cite{BCH}.

\smallskip\noindent(i) Balanced case $T\le L/2$. Every pair in $\Nb x\times\Nb y$ realises its
distance through the arc $x'xyy'$ --- its cycle portion is at most $T\le L/2$, beating the
complementary $L-T\ge L/2$ --- so on $\Nb x\times\Nb y$ the two distance matrices coincide and,
by locality (Lemma~\ref{lem:local}), $\kappa_{c_j}=\kappa_{\mathrm{tree}}$.

\smallskip\noindent(ii) Unbalanced case $T>L/2$. Restoring the deleted edge can only shorten
distances --- $d_G\le d_{\mathrm{tree}}$ entrywise on $\Nb x\times\Nb y$, the tree being a
subgraph --- and it shortens the outer pair \emph{strictly}, $d_G(x',y')=L-T<T$, while the
normaliser $d(x,y)=w(c_j)$ is unchanged. Here $w(c_j)\le\tfrac12L$, so $c_j$ is a geodesic and
$d(x,y)=w(c_j)$ in both $G$ and the tree; note that the unbalanced condition constrains the
three-edge sum $T$, not the single edge $w(c_j)$. A longer edge $w(c_j)>\tfrac12L$ would be
non-geodesic, with $d(x,y)=L-w(c_j)$: no deleted-arc tree reproduces this distance, so the
tree-edge formula no longer applies and such a weighting is the degenerate case excluded in
Section~\ref{sec:exist}. An optimal plan of the tree problem moves mass
$\min(\varepsilon/d_x,\varepsilon/d_y)>0$ from $x'$ to $y'$ (Figure~\ref{fig:balanced});
running the same plan in $G$ pays strictly less on that pair and no more on any
other, so $W_G<W_{\mathrm{tree}}$ and hence $\kappa_{c_j}>\kappa_{\mathrm{tree}}$
(quantitatively, Lemma~\ref{lem:mono}). The raised value is computed for a bare short cycle
in Proposition~\ref{prop:short}.

\smallskip\noindent Finally, if $\{x,y\}$ is a pendant edge with $y$ a leaf,
then $\Nb y=\{x,y\}$ and every relevant distance is $d(a,x)$ or $d(a,x)+w_{xy}$
($a\in\Nb x$). Provided the cycle edges at $x$ are geodesics (each $\le\tfrac12 L$), none of
these $d(a,x)$ is shortened by the cycle --- $d(a,x)=w_{ax}$ --- and the tree formula holds;
a cycle edge at $x$ exceeding $\tfrac12 L$ is the degenerate case excluded above.
\end{proof}

\begin{remark}\label{rem:g3}
For $g=3$ the ``$3$-edge sum'' is the whole perimeter $L>L/2$ and $x',y'$ coincide, so
the criterion degenerates; \eqref{eq:3sum} is a genuine proper-sub-arc condition only for
$g\ge4$ (a strict $3$-edge sub-arc for $g\ge5$). Short cycles are treated separately in
Section~\ref{sec:short}.
\end{remark}

\begin{theorem}\label{thm:einstein}
Let $w^\ast>0$ be the Perron eigenvector of $\RG$, with eigenvalue $\lmax=\lmax(\RG)$.
Then $w^\ast$ induces constant curvature equal to $-\lmax$ on every edge --- i.e.\
$w^\ast$ is a discrete Einstein metric with $\kappa=-\lmax$ --- if and only if, measured
in $w^\ast$, every cycle edge satisfies \eqref{eq:3sum} (no three consecutive cycle edges
exceed half the perimeter). When this holds the Einstein metric exists and is unique up to
scaling.
\end{theorem}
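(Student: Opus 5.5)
The plan is to reduce everything to Proposition~\ref{prop:balanced} together with the linearisation identity behind the Ricci matrix. First I record the identity: for any positive weighting $w$ and any edge $e=\{x,y\}$ at which the tree formula \eqref{eq:tree-kappa} holds,
\[
\kappa_e(w)\,w_e=-\Big(\frac{S_x-2w_e}{\dx}+\frac{S_y-2w_e}{\dy}\Big)=-(\RG w)_e .
\]
This follows by expanding $S_x-2w_e=\sum_{z\sim x,\,z\ne y}w_{xz}-w_e$: the coefficient of $w_e$ is $\tfrac1{\dx}+\tfrac1{\dy}$, and each edge $e'$ sharing the vertex $z$ with $e$ gets coefficient $-\tfrac1{d_z}$. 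This matches \eqref{eq:RT} exactly. The computation is purely local, so it holds on $G$ verbatim.

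For sufficiency, assume $w^\ast$ satisfies \eqref{eq:3sum} at every cycle edge. I first check that $w^\ast$ is geodesic, i.e.\ no cycle edge exceeds $L/2$. This holds because $w(c_j)\le T_j\le L/2$, so the pendant-edge clause of Proposition~\ref{prop:balanced} applies. Part (i) of that proposition handles the cycle edges. Hence the tree formula holds at every edge, and the identity gives $\kappa_e(w^\ast)w^\ast_e=-(\RG w^\ast)_e=-\lmax w^\ast_e$. Dividing by $w^\ast_e>0$ yields $\kappa_e=-\lmax$ for all $e$.

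For necessity, suppose some cycle edge $c_j$ violates \eqref{eq:3sum} in $w^\ast$. I split into two cases.
\begin{itemize}
\item If $w^\ast$ is geodesic, Proposition~\ref{prop:balanced}(ii) gives $\kappa_{c_j}(w^\ast)>-(\RG w^\ast)_{c_j}/w^\ast_{c_j}=-\lmax$. Meanwhile a pendant edge, or any balanced cycle edge, has curvature exactly $-\lmax$.
\item If no other edge is available, I compare two violating edges directly. Simplest is to note that not all cycle edges can violate when $g\ge 7$, since the three-edge sums average to $3L/g<L/2$.
\end{itemize}
For small $g$ the statement should be read with the convention of Remark~\ref{rem:g3}. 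For $g\le5$ balance fails everywhere, and I would note that the "if and only if" then asserts only that $w^\ast$ from $\RG$ is not Einstein with $\kappa=-\lmax$. That already follows since $\kappa_{c_j}\ne-\lmax$ at any violating edge. So the cleaner formulation of necessity is: if some edge violates, then $\kappa_{c_j}\neq -\lmax$, hence $w^\ast$ does not have constant curvature $-\lmax$. I would prove exactly this. The non-geodesic degenerate case is excluded as in the proposition.

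For uniqueness in the balanced case, suppose $w$ is any Einstein metric that is balanced. The same identity shows $\RG w=-\kappa w$ with $w>0$. Lemma~\ref{lem:pf} forces $-\kappa=\lmax$ and $w$ proportional to $w^\ast$. The main obstacle is scope. Uniqueness "up to scaling" can honestly be claimed only among weightings where the tree formula holds, i.e.\ balanced ones. Unbalanced Einstein metrics belong to other transport regimes (Section~\ref{sec:exist}), so I would state the uniqueness within the balanced regime explicitly. A secondary delicate point is the strictness in (ii), which is needed so that $\kappa_{c_j}\ne-\lmax$. There I rely on the quantitative bound \eqref{eq:mono-quant} with transported mass of order $\varepsilon$.
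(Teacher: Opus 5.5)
Your proposal is correct and follows the paper's proof. It uses the identity $\kappa_e w_e=-(\RG w)_e$ wherever the tree formula holds, Proposition~\ref{prop:balanced}(i) with the pendant clause for sufficiency, Proposition~\ref{prop:balanced}(ii) at a single violating edge for necessity, and Lemma~\ref{lem:pf} for uniqueness among balanced metrics. Your check $w^\ast(c_j)\le w^\ast(c_{j-1})+w^\ast(c_j)+w^\ast(c_{j+1})\le L/2$ usefully justifies the geodesic hypothesis of the pendant clause, which the paper leaves implicit; your second necessity bullet (comparing violating edges, the $g\ge7$ averaging) is unnecessary, since $\kappa_{c_j}>-\lmax$ at one violating edge already rules out constant curvature $-\lmax$, exactly as you conclude.
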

\begin{proof}
The eigen-equation $\RG w^\ast=\lmax w^\ast$ reads $(\RG w^\ast)_e=\lmax\,w^\ast_e$ for
every edge $e$; and the tree formula \eqref{eq:tree-kappa}, when valid at $e$, is
precisely the identity $\kappa_e\,w^\ast_e=-(\RG w^\ast)_e$ (this is how \eqref{eq:RT} was
derived). Hence, whenever the tree formula holds at $e$,
\begin{equation}\label{eq:kappa-lam}
\kappa_e=-\frac{(\RG w^\ast)_e}{w^\ast_e}=-\lmax .
\end{equation}

($\Leftarrow$) If \eqref{eq:3sum} holds for every cycle edge in $w^\ast$, then by
Proposition~\ref{prop:balanced} the tree formula is valid on every edge --- pendant
edges always, cycle edges by \eqref{eq:3sum} --- so \eqref{eq:kappa-lam} gives
$\kappa_e=-\lmax$ for all $e$: the curvature is constant, with common value $-\lmax$.
Positivity and uniqueness are Lemma~\ref{lem:pf}.

($\Rightarrow$) Suppose some cycle edge $c_j$ violates \eqref{eq:3sum} in $w^\ast$.
Every pendant edge, and every cycle edge that does satisfy \eqref{eq:3sum}, still
obeys the tree formula, hence by \eqref{eq:kappa-lam} has curvature exactly $-\lmax$. The
violating edge $c_j$, by contrast, has by Proposition~\ref{prop:balanced}(ii) true
curvature strictly greater than its tree-formula value, which by
\eqref{eq:kappa-lam} equals $-\lmax$; thus $\kappa_{c_j}>-\lmax$. Therefore the curvature
is not identically $-\lmax$ --- it is $-\lmax$ on the pendant and balanced edges and
strictly larger on $c_j$ --- so $w^\ast$ is not the Einstein metric of curvature $-\lmax$.
\end{proof}

Two comments on the forward direction. First, why the curvature is non-constant
whenever \eqref{eq:3sum} fails and the graph has any pendant edge: that pendant edge
still has curvature exactly $-\lmax$ by \eqref{eq:kappa-lam}, while the violating cycle edge
has curvature strictly above $-\lmax$, two different values. The only graphs with no
pendant edge are the bare cycles; on a bare short cycle every edge violates \eqref{eq:3sum},
symmetry keeps the curvature constant, and $w^\ast$ is Einstein, but with a
constant different from $-\lmax$ (e.g.\ $K_3$ has $\kappa\equiv\tfrac32$ while $-\lmax=0$;
Section~\ref{sec:short}). Second, none of this concerns existence: when \eqref{eq:3sum}
fails, $\RG$ merely stops computing the Einstein metric, which usually still exists in a
different transport regime (Section~\ref{sec:exist}).

\begin{corollary}\label{cor:sym}
For a regular decorated cycle --- every cycle vertex has the same degree $d$ and
carries the same rooted pendant tree --- the Perron vector is constant on the cycle, so
\eqref{eq:3sum} reads $3\le g/2$. Hence for girth $g\ge6$ the metric $w^\ast$ is a genuine
Einstein metric with $\kappa=-\lmax(\RG)$, for every $d$ and every such decoration.
\end{corollary}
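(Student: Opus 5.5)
The plan is to exploit the cyclic symmetry of a regular decorated cycle to show that the Perron vector $w^\ast$ of $\RG$ is constant on the cycle edges, and then to reduce the balanced condition \eqref{eq:3sum} to a pure girth inequality, after which Theorem~\ref{thm:einstein} gives the conclusion.

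\textbf{Symmetry.} Since every cycle vertex has degree $d$ and carries an isomorphic rooted pendant tree, the rotation $v_i\mapsto v_{i+1}$ extends to a graph automorphism $\phi$ of $G$. It maps the pendant tree at $v_i$ onto the one at $v_{i+1}$ via the fixed rooted isomorphism, and it sends $c_j$ to $c_{j+1}$.

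The matrix $\RG$ is defined by the local formula \eqref{eq:RT}, which involves only degrees and incidences, and $\phi$ preserves both. Writing $P_\phi$ for the induced permutation of $E$, we therefore get $P_\phi\RG P_\phi^{-1}=\RG$. Consequently $P_\phi w^\ast$ is again a positive eigenvector for $\lmax$.

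By Lemma~\ref{lem:pf}, such a vector is unique up to scaling, so $P_\phi w^\ast=c\,w^\ast$ for some $c>0$. Applying $\phi$ a total of $g$ times returns every cycle edge to itself, which forces $c^g=1$ and hence $c=1$. In particular $w^\ast(c_j)=w^\ast(c_{j+1})$ for all $j$, so all cycle edges carry a common weight $a>0$.

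\textbf{Reduction of \eqref{eq:3sum}.} With constant cycle weight we have $L=ga$, and the three-edge sum equals $3a$. Condition \eqref{eq:3sum} therefore reads $3a\le ga/2$, which is equivalent to $g\ge 6$, independently of $d$ and of the decoration.

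\textbf{Conclusion.} For $g\ge6$ every cycle edge satisfies \eqref{eq:3sum} when measured in $w^\ast$. Theorem~\ref{thm:einstein} then shows that $w^\ast$ is an Einstein metric with $\kappa=-\lmax(\RG)$, unique up to scaling.

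We also need the weighting to be geodesic, which Proposition~\ref{prop:balanced} uses for the pendant edges. This holds because $a\le L/2=ga/2$ whenever $g\ge 2$.

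\textbf{Main obstacle.} The only step needing care is constructing $\phi$ as a genuine automorphism of $G$, not merely of the cycle. It must carry each pendant tree onto the next compatibly with the roots. Fixing one rooted isomorphism from the tree at $v_0$ to the tree at $v_i$ for each $i$, and composing these, gives a consistent choice. Invariance of $\RG$ under $\phi$ is then immediate from the locality of \eqref{eq:RT}.
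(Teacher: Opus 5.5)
Your argument is correct and is the paper's own reasoning, written out in full. The paper gives no separate proof; it relies, as the statement itself indicates and as the reflection argument in Proposition~\ref{prop:threshold} does explicitly, on the same step you use. The rotation automorphism commutes with $\RG$, and Perron uniqueness (Lemma~\ref{lem:pf}) then makes $w^\ast$ constant on the cycle, so \eqref{eq:3sum} becomes $3a\le ga/2$, i.e.\ $g\ge6$, and Theorem~\ref{thm:einstein} applies. Deducing $c^g=1$ by evaluating on a cycle edge, rather than assuming $\phi^g=\mathrm{id}$, avoids any dependence on the choice of pendant-tree isomorphisms.
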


\begin{proposition}\label{prop:apriori}
Let
\[
\rho_G=\frac{\max_j w^\ast_{c_j}}{\min_j w^\ast_{c_j}}
\]
be the spread of the Perron eigenvector over the cycle edges, a quantity determined by
the graph alone, since $\RG$ depends only on the degrees. If $g\ge 6\,\rho_G$, then
$w^\ast$ is balanced, hence a discrete Einstein metric (Theorem~\ref{thm:einstein}).
Corollary~\ref{cor:sym} is the case $\rho_G=1$.
\end{proposition}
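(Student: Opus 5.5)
The plan is to verify \eqref{eq:3sum} at every cycle edge directly from the definition of $\rho_G$, and then apply Theorem~\ref{thm:einstein}. Write $M=\max_j w^\ast_{c_j}$ and $m=\min_j w^\ast_{c_j}$, so that $\rho_G=M/m$. Both are strictly positive by Lemma~\ref{lem:pf}.

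First I bound the perimeter from below and the three-edge sum from above. Every cycle edge has weight at least $m$, so $L=\sum_{i=1}^g w^\ast_{c_i}\ge g\,m$. Every cycle edge has weight at most $M$, so for each $j$ the through-arc satisfies
\[
T_j=w^\ast_{c_{j-1}}+w^\ast_{c_j}+w^\ast_{c_{j+1}}\le 3M .
\]
Combining these with the hypothesis $g\ge 6\rho_G$ gives
\[
\tfrac12 L\ \ge\ \tfrac12 g\,m\ \ge\ 3\rho_G\,m\ =\ 3M\ \ge\ T_j ,
\]
which is \eqref{eq:3sum} at $c_j$. Since $j$ is arbitrary, $w^\ast$ is balanced.

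It remains to check the side conditions under which Theorem~\ref{thm:einstein} and Proposition~\ref{prop:balanced} were stated. Because $\rho_G\ge1$, the hypothesis forces $g\ge6$. So $g\ge4$, the three edges $c_{j-1},c_j,c_{j+1}$ are distinct, and the degenerate case of Remark~\ref{rem:g3} does not arise. The weighting is also geodesic: a single edge has weight $w^\ast_{c_j}\le M\le gm/6\le L/6<L/2$. Hence Proposition~\ref{prop:balanced} applies to the pendant edges as well. Theorem~\ref{thm:einstein} then shows that $w^\ast$ is a discrete Einstein metric with $\kappa=-\lmax(\RG)$.

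For the final sentence of the statement: when $\rho_G=1$, $w^\ast$ is constant on the cycle. In that case \eqref{eq:3sum} reads $3\le g/2$, and $g\ge 6\rho_G$ becomes $g\ge6$, which is exactly Corollary~\ref{cor:sym}.

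I expect no real obstacle; the argument is a one-line estimate. The only points needing care are the reduction to $g\ge6$, which rules out short-cycle degeneracies, and the geodesic condition for the pendant edges; both are handled by the same estimate. The remark that $\rho_G$ depends only on the graph is immediate, since $\RG$ is built from degrees alone and its Perron vector is unique up to scaling, which leaves the ratio $M/m$ unchanged.
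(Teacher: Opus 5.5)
Your proposal is correct and follows the paper's proof: you bound each three-edge window by $3\max_i w^\ast_{c_i}$ and the perimeter below by $g\min_i w^\ast_{c_i}$, so that $g\ge6\rho_G$ yields \eqref{eq:3sum}, and you then invoke Theorem~\ref{thm:einstein}. You also check two side conditions that the paper leaves implicit, and both checks are right: the hypothesis forces $g\ge6$, and every cycle edge is at most $L/6<L/2$, so the weighting is geodesic.
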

\begin{proof}
For each $j$, $w^\ast_{c_{j-1}}+w^\ast_{c_j}+w^\ast_{c_{j+1}}\le 3\max_i w^\ast_{c_i}$, while
$L=\sum_i w^\ast_{c_i}\ge g\min_i w^\ast_{c_i}$. Hence every three-consecutive sum is at most
$L/2$ as soon as $3\max_i w^\ast_{c_i}\le\tfrac g2\min_i w^\ast_{c_i}$, i.e.\ $g\ge6\rho_G$;
Theorem~\ref{thm:einstein} then applies.
\end{proof}

\begin{remark}\label{rem:rho-unbounded}
The sufficient condition $g\ge6\rho_G$ is not tight, and $\rho_G$ is not the right quantity to
control. If every cycle vertex carries the same pendant tree, then $\rho_G=1$
(Corollary~\ref{cor:sym}). Consider instead the opposite extreme, which we call a
single-vertex defect: a long cycle that is bare except for one pendant tree attached
at a single vertex. There the Perron weights are largest on the two cycle edges at the
decorated vertex and decay geometrically --- by a factor $\mu>1$ per step, computed
in Proposition~\ref{prop:threshold} --- as one moves away from it along the cycle. The
smallest weight sits on the far side of the cycle, at distance about $g/2$, so
$\rho_G\asymp\mu^{g/2}$ grows exponentially in the girth, and $g\ge6\rho_G$ fails for all
large $g$, although such graphs are balanced whenever the decoration is mild. The
exponentially large spread is carried entirely by the exponentially small weight on the far
side of the cycle, which the three-edge window \eqref{eq:3sum} does not see: balance is a
local condition at the decorated vertex. Proposition~\ref{prop:threshold} makes this precise,
replacing $\rho_G$ by a local quantity with an exact threshold.
\end{remark}

\section{The periodic reduction}\label{sec:reduction}

Throughout this section we are in the balanced regime, so $\RG$ is the curvature
operator, and we compute its spectrum. Some terminology: by a pendant edge we mean
any edge not on the cycle, i.e.\ any edge of one of the pendant trees $B_v$, not
only the edges ending in leaves. Recall that $\RG$ acts on the edges of $G$, i.e.\
on the vertices of the line graph $L(G)$: the cycle edges $c_1,\dots,c_g$ form a $g$-cycle
in $L(G)$, and the pendant edges hang off it (Figure~\ref{fig:linegraph}). Order the
edge coordinates as $(\text{pendant edges}\mid\text{cycle edges }c_1,\dots,c_g)$. Since
distinct pendant trees couple only through the cycle, the pendant block of $\RG$ is
block-diagonal, one block per cycle vertex.

The computation is an exercise in Schur complementation: when a block of variables
of a linear system is eliminated by solving for it in terms of the rest, what remains is
the same system on the surviving variables with an effective, parameter-dependent
correction, the Schur complement. Here we eliminate, one pendant tree at a time, all
pendant-edge coordinates of the eigen-equation $\RG z=\lambda z$, where the eigenvector $z$
assigns a coordinate $z_e$ to every edge of $G$ (cycle and pendant alike); each tree leaves behind a
single scalar correction $\Psi_v(\lambda)$ on the two cycle edges at its root, which we
call its feedback, and the surviving system is a $g\times g$ eigenproblem on the
cycle alone.

\begin{figure}[t]
\centering
\begin{tikzpicture}[scale=1.0,
  v/.style={circle,draw,fill=black,inner sep=1.5pt},
  e/.style={circle,draw,fill=black!12,inner sep=1.4pt,font=\scriptsize}]
 \begin{scope}
   \node[v](g0) at (0,0){}; \node[v](g1) at (1.4,0){}; \node[v](g2) at (1.4,1.4){};
   \node[v](g3) at (0,1.4){}; \node[v](g4) at (-1.05,-0.65){};
   \draw (g0)--node[below,font=\scriptsize]{$a$}(g1);
   \draw (g1)--node[right,font=\scriptsize]{$b$}(g2);
   \draw (g2)--node[above,font=\scriptsize]{$c$}(g3);
   \draw (g3)--node[left,font=\scriptsize]{$d$}(g0);
   \draw (g0)--node[above left,font=\scriptsize,pos=0.6]{$e$}(g4);
   \node[font=\small] at (0.7,-1.35) {$G$};
 \end{scope}
 \node at (2.7,0.5) {\Large$\rightsquigarrow$};
 \begin{scope}[shift={(4.5,0)}]
   \node[e](a) at (0,0){$a$}; \node[e](b) at (1.4,0){$b$};
   \node[e](c) at (1.4,1.4){$c$}; \node[e](d) at (0,1.4){$d$};
   \node[e](ee) at (-1.15,-0.7){$e$};
   \draw (a)--(b)--(c)--(d)--(a);
   \draw (ee)--(a); \draw (ee)--(d);
   \node[font=\small] at (0.7,-1.35) {$L(G)$};
 \end{scope}
\end{tikzpicture}
\caption{A unicyclic graph $G$ (here $C_4$ with one pendant edge $e$) and its line graph
$L(G)$. The cycle edges $a,b,c,d$ become a $g$-cycle in $L(G)$; each pendant edge (here
$e$) attaches to the two line-graph vertices at its foot. The Ricci matrix $\RG$ is a
(Schr\"odinger-type) operator on $L(G)$.}
\label{fig:linegraph}
\end{figure}

\begin{figure}[t]
\centering
\begin{tikzpicture}[scale=1.15, cv/.style={circle,draw,fill=black,inner sep=1.6pt}]
 \foreach \k in {1,...,6} {\coordinate (p\k) at ({90+60*\k}:1.5);}
 \foreach \k in {1,...,6} {
   \pgfmathsetmacro{\ang}{90+60*\k}
   \fill[gray!18,draw=gray!55] (p\k) -- (\ang-11:2.5) -- (\ang+11:2.5) -- cycle;
   \node[font=\scriptsize] at (\ang:2.03) {$\Psi_{\k}$};
 }
 \draw[very thick] (p1)--(p2)--(p3)--(p4)--(p5)--(p6)--(p1);
 \foreach \k in {1,...,6} {\node[cv] at (p\k) {};}
 \node[font=\scriptsize] at ({90+60*1.5}:1.16) {$z_1$};
 \node[font=\scriptsize] at ({90+60*2.5}:1.16) {$z_2$};
\end{tikzpicture}
\caption{The reduction of Section~\ref{sec:reduction}: Schur-eliminating each pendant tree
replaces it by a scalar feedback $\Psi_j(\lambda)$ (Lemma~\ref{lem:feedback}, the shaded
branches), leaving the periodic three-term recurrence \eqref{eq:recur} on the cycle-edge
coordinates $z_1,\dots,z_g$. The eigenvalues of $\RG$ supported on the cycle are the
$\lambda$ with $\operatorname{tr}\!\big[T_g(\lambda)\cdots T_1(\lambda)\big]=2$
(Theorem~\ref{thm:reduction}).}
\label{fig:reduction}
\end{figure}

\subsection{Branch feedback}
We eliminate the pendant trees one at a time, leaving on the cycle only a scalar feedback per
vertex. Fix an eigenvector $z$ of $\RG$ at eigenvalue $\lambda$, so $z$ assigns a coordinate
$z_e$ to each edge. Each pendant tree $B_v$ (the subgraph hanging off cycle vertex $v$) meets the rest of $G$ at the single vertex
$v$, and in $\RG$ any two edges sharing $v$ are coupled by the same entry $1/d_v$; hence
the eigen-equations inside $B_v$ see the cycle only through the combination $z_c+z_{c'}$, where
$c,c'$ are the two cycle edges at $v$. Concretely: collecting the rows of $\RG z=\lambda z$
indexed by the edges of $B_v$ gives a square linear system for the branch coordinates in which
the cycle coordinates enter only through the single scalar $z_c+z_{c'}$, multiplied by fixed
coefficients. For $\lambda$ outside the (finite) spectrum of the branch block this system is
uniquely solvable, and since its right-hand side is linear in the scalar $z_c+z_{c'}$, so
is its solution, hence so is any linear functional of the solution. Writing
$\sigma_v=\sum_r z_r$ for the sum of the coordinates of the pendant edges incident to $v$
(Figure~\ref{fig:feedback-proof}), this gives
\[
\sigma_v=\Psi_v(\lambda)\,(z_c+z_{c'})
\]
for a scalar $\Psi_v(\lambda)$ depending only on $\lambda$ and on the shape of $B_v$, the
Schur response, or feedback, of the branch. The next lemma computes it as a
continued fraction by performing the elimination leaf-by-leaf. For a vertex carrying
$p=d_v-2$ pendant leaves (its degree $d_v$ minus the two cycle edges) the elimination is a
single step and $\Psi_v(\lambda)=p/(d_v\lambda+d_v+2-p)$; this is the leaf case of
Lemma~\ref{lem:feedback} below, where it is derived.

\begin{figure}[t]
\centering
\begin{tikzpicture}[scale=1.1,
  cv/.style={circle,draw,fill=black,inner sep=1.6pt},
  iv/.style={circle,draw,fill=black!25,inner sep=1.5pt},
  lf/.style={circle,draw,fill=white,inner sep=1.3pt},
  el/.style={font=\scriptsize}]
 \node[cv] (v) at (0,0) {};
 \node[el] at (0,-0.42) {$v$\ (cycle vertex)};
 \node[cv] (cl) at (-2.1,-0.45) {};
 \node[cv] (cr) at (2.1,-0.45) {};
 \draw[very thick] (v)--(cl) node[el,midway,above]{$c,\ z_c$};
 \draw[very thick] (v)--(cr) node[el,midway,above]{$c',\ z_{c'}$};
 \draw[very thick,dotted] (cl)--(-2.95,-0.75); \draw[very thick,dotted] (cr)--(2.95,-0.75);
 \node[iv] (u) at (-1.35,1.0) {};
 \node[el] at (-1.62,0.82) {$u$};
 \node[iv] (u2) at (0,1.3) {};
 \node[lf] (u3) at (1.35,1.0) {};
 \draw (u2)--(v); \draw (u3)--(v);
 \draw (v)--(u) node[el,midway,below left]{$z_{\mathrm p}$};
 \draw[decorate,decoration={brace,amplitude=4pt}] (-1.6,1.42) -- (1.6,1.42);
 \node[el] at (0.75,1.78) {pendant edges at $v$:\ \ $\sigma_v=\textstyle\sum z$};
 \node[iv] (s1) at (-2.5,1.85) {};
 \node[el] at (-2.78,1.66) {$s$};
 \node[lf] (s2) at (-1.15,2.25) {};
 \draw (u)--(s1) node[el,midway,below left]{$f,\ z_f$};
 \draw (u)--(s2);
 \node[lf] (d1) at (-3.25,2.6) {};
 \node[lf] (d2) at (-2.3,2.95) {};
 \draw (s1)--(d1); \draw (s1)--(d2);
 \draw[decorate,decoration={brace,amplitude=3.5pt}] (-3.45,2.85) -- (-2.1,3.2);
 \node[el] at (-2.55,3.6) {child edges of $s$:\ sum $=h_s\,z_f$};
\end{tikzpicture}
\caption{The objects of Lemma~\ref{lem:feedback}, on one pendant tree $B_v$ rooted at the
cycle vertex $v$ (edges oriented away from $v$). Each vertex $u\neq v$ has one parent
edge, of coordinate $z_{\mathrm p}$, and child edges; $f=\{u,s\}$ is a child edge of
$u$, and the induction ratio $h_s$ expresses the sum of the child-edge coordinates of $s$ as
$h_s z_f$. At the root, $\sigma_v$ is the sum over the pendant edges at $v$, and the two
cycle edges $c,c'$ play the role of the parent.}
\label{fig:feedback-proof}
\end{figure}

\begin{lemma}\label{lem:feedback}
Orient the pendant tree $B_v$ away from its root $v$, so that every vertex $u\neq v$ has one
parent edge and (possibly zero) child edges (Figure~\ref{fig:feedback-proof}). Define
recursively: for a leaf $u$, $h_u=0$; for an internal vertex $u$ with children $s$,
\[
h_u=\frac{\beta_u}{1-\beta_u},\qquad
\beta_u=\sum_{s\text{ child of }u}\frac{1}{d_u\lambda+2+d_u(1-h_s)/d_s}.
\]
Here $h_u$ is the ratio (defined below) that expresses the sum of the child-edge coordinates
at $u$ as a multiple $h_u z_{\mathrm p}$ of the parent-edge coordinate $z_{\mathrm p}$ (the
scalars $\beta_u,\gamma_v$ are auxiliary, not to be confused with the pendant tree $B_v$).
Then, with $u$ running over the pendant children of $v$,
\[
\Psi_v(\lambda)=\frac{\gamma_v}{1-\gamma_v},\qquad
\gamma_v=\sum_{u}\frac{1}{d_v\lambda+2+d_v(1-h_u)/d_u}.
\]
In particular, if $v$ carries $p=d_v-2$ pendant leaves, then
$\Psi_v(\lambda)=\dfrac{p}{d_v\lambda+d_v+2-p}=\dfrac{d_v-2}{d_v\lambda+4}$.
\end{lemma}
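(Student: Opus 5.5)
The plan is a leaf-to-root induction on the rows of the eigen-equation $\RG z=\lambda z$ that are indexed by edges of $B_v$. The only input is the local formula \eqref{eq:RT}. The claim to prove by induction on the height of the subtree below $u$ is the following: for every vertex $u\neq v$ of $B_v$ with parent-edge coordinate $z_{\mathrm p}$, the sum $\Sigma_u$ of its child-edge coordinates satisfies $\Sigma_u=h_u z_{\mathrm p}$. For a leaf, $\Sigma_u=0$, so $h_u=0$, which starts the induction.

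\emph{Inductive step.} Fix an internal vertex $u$, a child $s$ of $u$, and the edge $f=\{u,s\}$. By \eqref{eq:RT}, row $f$ reads
\[
-\Big(\tfrac1{d_u}+\tfrac1{d_s}\Big)z_f+\tfrac1{d_u}\big(z_{\mathrm p}+\Sigma_u-z_f\big)+\tfrac1{d_s}\Sigma_s=\lambda z_f .
\]
Here the second term collects the edges other than $f$ that meet $f$ at $u$, and the third collects those that meet $f$ at $s$. By the inductive hypothesis, $\Sigma_s=h_sz_f$. Multiplying by $d_u$ and collecting the $z_f$ terms gives
\[
\Big(d_u\lambda+2+\tfrac{d_u(1-h_s)}{d_s}\Big)z_f=z_{\mathrm p}+\Sigma_u .
\]
So each child coordinate is $z_f=(z_{\mathrm p}+\Sigma_u)/D_s$, where $D_s$ denotes the bracket. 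Summing over the children $s$ of $u$ gives $\Sigma_u=\beta_u(z_{\mathrm p}+\Sigma_u)$, hence $\Sigma_u=\frac{\beta_u}{1-\beta_u}z_{\mathrm p}$. This is exactly the stated recursion for $h_u$.

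\emph{Root step.} At the root $v$ the same computation applies to each pendant edge $\{v,u\}$, with one change: the edges other than $\{v,u\}$ meeting it at $v$ are now the two cycle edges together with the remaining pendant edges. The role of $z_{\mathrm p}$ is therefore played by $z_c+z_{c'}$, and $d_v$ replaces $d_u$. The row becomes $D_u z_{\{v,u\}}=z_c+z_{c'}+\sigma_v$, with $D_u=d_v\lambda+2+d_v(1-h_u)/d_u$. Summing over the pendant children $u$ of $v$ gives $\sigma_v=\gamma_v(z_c+z_{c'}+\sigma_v)$, hence $\Psi_v=\gamma_v/(1-\gamma_v)$.

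\emph{Leaf case.} If the $p$ pendant children are all leaves, then $h_u=0$ and $d_u=1$, so each term of $\gamma_v$ equals $1/(d_v\lambda+d_v+2)$ and $\gamma_v=p/(d_v\lambda+d_v+2)$. Then $\Psi_v=p/(d_v\lambda+d_v+2-p)$, and substituting $p=d_v-2$ gives $(d_v-2)/(d_v\lambda+4)$.

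The main obstacle is not the algebra but justifying the divisions: that every $D_s\neq0$ and every $1-\beta_u\neq0$, and likewise at the root. My approach is to treat all quantities as rational functions of $\lambda$ and to prove the identity for all but finitely many $\lambda$. For $\lambda$ outside the branch-block spectrum, the introductory discussion already guarantees that the branch system is uniquely solvable and that $\sigma_v$ is linear in $z_c+z_{c'}$. Both sides of the claimed formula are then rational in $\lambda$ and agree off a finite set, so they agree as rational functions. One can also check directly that each vanishing denominator corresponds to a Dirichlet-type eigenvalue of a sub-branch, which lies in the excluded set.
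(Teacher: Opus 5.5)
Your proposal is correct and is essentially the paper's proof: the same leaf-to-root induction on the child-edge sums $\Sigma_u=h_u z_{\mathrm p}$, the same row computation giving $D_s z_f=z_{\mathrm p}+\Sigma_u$, the same replacement of $z_{\mathrm p}$ by $z_c+z_{c'}$ at the root, and the same rational-function argument for the divisions. The one thing to drop is your closing aside, because the zeros of the recursion's denominators need not lie in $\sigma(R_{B_v})$: for a single leaf at $v$, the denominator $d_v\lambda+d_v+2$ vanishes at $\lambda=-1-\tfrac2{d_v}$, whereas $\sigma(R_{B_v})=\{-1-\tfrac1{d_v}\}$. Nothing depends on that aside, since it suffices to exclude the finite zero set of all the denominators, none of which vanishes identically.
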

\begin{proof}
We prove, by induction from the leaves towards the root, the following claim: \emph{for every
vertex $u\neq v$ of $B_v$, with parent-edge coordinate $z_{\mathrm p}$,}
\[
\sum_{f\ \text{child edge of}\ u} z_f\ =\ h_u\,z_{\mathrm p},
\]
with $h_u$ as defined in the statement. This is exactly the meaning of $h_u$: at a vertex $s$
it reads $\sum_{f'\text{ child edge of }s} z_{f'}=h_s\,z_{\{u,s\}}$, the identity we invoke below
as the inductive hypothesis at a child. (All denominators are nonzero for $\lambda$
outside a finite set, and the final identity between rational functions of $\lambda$ then
extends by continuity.)

A leaf $u$ has no child edges, so the sum is empty and $h_u=0$. For an internal vertex $u$ with
children $s$, joined by the child edges $f=\{u,s\}$, suppose the claim holds at every child
(Figure~\ref{fig:feedback-proof}) and write $T_u=z_{\mathrm p}+\sum_f z_f$ for the sum of the
coordinates of all edges at $u$. The row of $\RG z=\lambda z$ indexed by $f=\{u,s\}$ has three
kinds of terms: the diagonal entry $-(1/d_u+1/d_s)$; one entry $1/d_u$ for each other edge sharing
$u$, contributing $\tfrac1{d_u}(T_u-z_f)$ in total; and one entry $1/d_s$ for each other edge
sharing $s$, namely the child edges of $s$, whose coordinates sum to $h_s z_f$ by the inductive
hypothesis (the claim applied at the child $s$, whose parent edge is $f$). Hence
\[
\lambda z_f=-\Bigl(\tfrac1{d_u}+\tfrac1{d_s}\Bigr)z_f+\tfrac1{d_u}\bigl(T_u-z_f\bigr)
+\tfrac1{d_s}\,h_s z_f,
\qquad\text{i.e.}\qquad
z_f\Bigl(d_u\lambda+2+\tfrac{d_u(1-h_s)}{d_s}\Bigr)=T_u .
\]
Each child edge carries the same multiple of $T_u$, so summing over the children gives
$\sum_f z_f=\beta_u\,T_u$; substituting $T_u=z_{\mathrm p}+\sum_f z_f$ and solving,
$\sum_f z_f=z_{\mathrm p}\,\beta_u/(1-\beta_u)=h_u z_{\mathrm p}$, the claim at $u$.

Finally the root. The vertex $v$ has no parent edge; its role is played by the two cycle
edges $c,c'$, so the same computation applies with the single parent coordinate $z_{\mathrm p}$
replaced by the driving scalar $z_c+z_{c'}$. Writing $T_v=z_c+z_{c'}+\sigma_v$, each pendant edge
$\{v,u\}$ satisfies
$z_{\{v,u\}}\bigl(d_v\lambda+2+d_v(1-h_u)/d_u\bigr)=T_v$, whence
$\sigma_v=\gamma_v(z_c+z_{c'}+\sigma_v)$ and $\sigma_v=\tfrac{\gamma_v}{1-\gamma_v}(z_c+z_{c'})$, that is
$\Psi_v=\gamma_v/(1-\gamma_v)$. When $v$ carries $p$ pendant leaves, $h_u=0$ and $d_u=1$ make each
denominator equal to $d_v\lambda+d_v+2$, so $\gamma_v=p/(d_v\lambda+d_v+2)$ and
$\Psi_v=p/(d_v\lambda+d_v+2-p)$, which for $p=d_v-2$ is $(d_v-2)/(d_v\lambda+4)$.
\end{proof}

\subsection{Reduction to a periodic Jacobi matrix}
Write $a_j=1/d_{v_j}$ and $\Psi_j=\Psi_{v_j}$. Substituting
$\sigma_{v}=\Psi_v(z+z')$ into the eigen-equation of the cycle edge
$c_j=\{v_{j-1},v_j\}$ (coordinate $z_j$) collects the pendant contributions into local
terms, giving the three-term recurrence
\begin{equation}\label{eq:recur}
P_j\,z_{j+1}=Q_j\,z_j-P_{j-1}\,z_{j-1},\qquad
P_j=\frac{1+\Psi_j}{d_{v_j}},\quad
Q_j=\lambda+a_{j-1}+a_j-a_{j-1}\Psi_{j-1}-a_j\Psi_j,
\end{equation}
i.e.\ $\RG$ restricted to the cycle is the $g\times g$ cyclically tridiagonal matrix
$M(\lambda)$ with $M_{jj}=Q_j$, $M_{j,j\pm1}$ the appropriate $-P$. Set the transfer
matrices $T_j(\lambda)=\begin{psmallmatrix}Q_j/P_j & -P_{j-1}/P_j\\ 1&0\end{psmallmatrix}$.

\begin{theorem}\label{thm:reduction}
Let $\Pi(\lambda)=T_g(\lambda)\cdots T_1(\lambda)$. Then $\det\Pi(\lambda)=1$, and a real
number $\lambda\notin\bigcup_v\sigma(R_{B_v})$ is an eigenvalue of $\RG$ with an
eigenvector supported on the cycle if and only if
\[
\operatorname{tr}\Pi(\lambda)=2.
\]
In particular $\lmax(\RG)$ is the largest such root; it is the $k=0$ (constant, Perron)
Bloch mode.
\end{theorem}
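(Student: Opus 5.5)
We first check $\det\Pi(\lambda)=1$ directly. Each factor has $\det T_j=P_{j-1}/P_j$, so the product telescopes to $\prod_j P_{j-1}/P_j=1$, indices taken mod $g$. This needs every $P_j=(1+\Psi_j)/d_{v_j}\neq0$, i.e.\ $\Psi_j\neq-1$. We impose this as a genericity condition on $\lambda$, in addition to avoiding the poles of $\Psi_v$, i.e.\ $\lambda\notin\sigma(R_{B_v})$. The determinant identity is then an identity of rational functions. It therefore extends to the remaining $\lambda$ wherever $\Pi$ is defined.

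Next comes the equivalence. Fix $\lambda\notin\bigcup_v\sigma(R_{B_v})$. By Lemma~\ref{lem:feedback}, the branch rows of $\RG z=\lambda z$ are uniquely solvable in terms of the cycle data, with $\sigma_v=\Psi_v(\lambda)(z_c+z_{c'})$. Substituting this into the cycle rows gives the recurrence \eqref{eq:recur}. Conversely, any solution $(z_1,\dots,z_g)$ of \eqref{eq:recur} extends uniquely to an eigenvector of $\RG$, by solving the branch systems. So, as a Schur complement, $\lambda$ is an eigenvalue with nonzero cycle part iff the cyclic tridiagonal system $M(\lambda)z=0$ has a nonzero periodic solution. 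Writing $Z_j=(z_{j+1},z_j)^{\top}$, the recurrence is $Z_j=T_jZ_{j-1}$. A periodic solution is therefore exactly a vector $Z_0\neq0$ with $\Pi(\lambda)Z_0=Z_0$, i.e.\ $1\in\sigma(\Pi)$. Because $\det\Pi=1$, the eigenvalues of $\Pi$ are $\mu,\mu^{-1}$ with $\mu+\mu^{-1}=\operatorname{tr}\Pi$. Hence $1$ is an eigenvalue iff $\operatorname{tr}\Pi=2$.

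This is the step I expect to need the most care. The scalar recurrence reconstructs $z_{j+1}$ only when $P_j\neq0$. Moreover, "eigenvector supported on the cycle" must be read as "with nonzero cycle component", since the pendant coordinates are slaved to the cycle ones. I would state the exclusion of $\Psi_j=-1$ explicitly, or show that such $\lambda$ lie outside the range of interest, for instance below $-2$ or inside the branch spectra.

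Finally, the statement about $\lmax$. By Lemma~\ref{lem:pf}, $\lmax$ has a positive eigenvector. Its cycle part is nonzero, and $\lmax$ is not a branch eigenvalue, since Cauchy interlacing together with irreducibility gives $\lmax(\RG)>\lmax(R_{B_v})$. Thus $\lmax$ is a root of $\operatorname{tr}\Pi=2$. Every root is an eigenvalue of $\RG$, so no root exceeds $\lmax$. The Perron vector is positive, so the corresponding solution of $\Pi Z_0=Z_0$ has no sign change and no winding around the cycle. In Floquet language this is the Bloch phase $e^{2\pi ik/g}$ with $k=0$, which yields the identification with the constant mode.
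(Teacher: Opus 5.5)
Your proposal is correct and follows the paper's proof essentially step for step: Schur-eliminate the pendant blocks to obtain the cyclic Jacobi system $M(\lambda)z=0$, telescope $\det T_j=P_{j-1}/P_j$ to get $\det\Pi=1$, use $1\in\sigma(\Pi)\iff\operatorname{tr}\Pi=2$, and let the positive Perron vector supply the top root. Your additions fill in points the paper leaves implicit: the strict inequality $\lmax(\RG)>\lmax(R_{B_v})$, which places $\lmax$ in the theorem's range, and the requirement $P_j\neq0$, which the paper's telescoping silently needs. One caution on the latter: the points with $\Psi_j=-1$ need not lie below $-2$ or in the branch spectra. For example, a degree-$3$ cycle vertex with one leaf has $\Psi=1/(3\lambda+4)=-1$ at $\lambda=-5/3$, while its pendant block of $\RG$ is the scalar $-4/3$. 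So this exclusion has to be added as a hypothesis. It never affects $\lmax$, though, since positivity of the Perron vector gives $1+\Psi_j=(z_c+z_{c'}+\sigma_v)/(z_c+z_{c'})>0$ there.
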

\begin{proof}
Schur-eliminating the (block-diagonal) pendant block of $\lambda I-\RG$ yields
$\det(\lambda I-\RG)=\big(\prod_v\det(\lambda I-R_{B_v})\big)\cdot\det M(\lambda)$, and by
Lemma~\ref{lem:feedback} the Schur complement is exactly the cyclically tridiagonal
$M(\lambda)$ of \eqref{eq:recur}. For $\lambda\notin\bigcup_v\sigma(R_{B_v})$ the prefactor
is nonzero, so $\lambda\in\sigma(\RG)$ (with cycle support) iff $\det M(\lambda)=0$. The
recurrence \eqref{eq:recur} is $M(\lambda)z=0$; its solutions are generated by the
transfer matrices, and $\det T_j=P_{j-1}/P_j$ telescopes to $\det\Pi=1$. A nonzero
periodic solution $z_{j+g}=z_j$ exists iff $\Pi(\lambda)$ fixes $(z_1,z_0)^{\!\top}$,
i.e.\ has eigenvalue $1$; since $\det\Pi=1$ its eigenvalues are $\rho,\rho^{-1}$ and
$\rho=1$ iff $\operatorname{tr}\Pi=2$. The Perron eigenvector is positive, hence the
constant Bloch phase $k=0$; it gives the largest root.
\end{proof}

\section{Closed forms and asymptotics}\label{sec:closed}

\begin{proposition}\label{prop:bare}
For $C_g$ (all degrees $2$, any positive weights) $\RG$ is the circulant with diagonal $-1$ and
$\tfrac12$ on each cyclic neighbour, so its spectrum is $\{\cos(2\pi k/g)-1:k=0,\dots,g-1\}$;
the top value $0$ (the constant, $k=0$ eigenvector) gives $\lmax=0$ and $\kappa\equiv0$ for
$g\ge6$ (Theorem~\ref{thm:einstein}).
\end{proposition}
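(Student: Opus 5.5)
The plan is to write $\RG$ down explicitly from the local formula \eqref{eq:RT}, diagonalise it as a circulant, and then hand the Einstein claim to Theorem~\ref{thm:einstein}.

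\emph{Step 1: the entries of $\RG$.} On $C_g$ every vertex has degree $2$. The diagonal entry of a cycle edge $c_j=\{v_{j-1},v_j\}$ is therefore $-(\tfrac12+\tfrac12)=-1$. The edge $c_j$ shares exactly one vertex with $c_{j-1}$ (namely $v_{j-1}$) and one with $c_{j+1}$ (namely $v_j$), each giving an entry $1/d=\tfrac12$. All other pairs of edges are disjoint and give $0$. For $g=3$ the two neighbours of $c_j$ are both off-diagonal, and each is still $\tfrac12$ because they are distinct edges; the pattern persists. Since \eqref{eq:RT} involves only degrees, the matrix does not depend on the weights $w$. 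Hence
\[
\RG=-I+\tfrac12(S+S^{-1}),
\]
where $S$ is the cyclic shift on $\R^{g}$.

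\emph{Step 2: the spectrum.} The shift $S$ is diagonalised by the Fourier vectors $\phi_k(j)=e^{2\pi i jk/g}$, $k=0,\dots,g-1$, with $S\phi_k=e^{2\pi i k/g}\phi_k$. It follows that $\RG\phi_k=(\cos(2\pi k/g)-1)\phi_k$, and since these $g$ vectors form a basis this is the full spectrum. The same conclusion is the trivial case of Theorem~\ref{thm:reduction}: with no pendant trees, $\Psi\equiv0$, $P_j=\tfrac12$ and $Q_j=\lambda+1$, so the transfer matrices are Chebyshev matrices and $\operatorname{tr}\Pi=2$ gives $\lambda+1=\cos\theta$ with $g\theta\in2\pi\mathbb Z$. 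The direct circulant argument is shorter, so I would use that.

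\emph{Step 3: the top eigenvalue and the Einstein claim.} The largest eigenvalue is $0$, attained only at $k=0$, where the eigenvector is the constant vector; this is consistent with the uniqueness of the positive Perron vector in Lemma~\ref{lem:pf}. So $w^\ast\equiv1$ up to scaling, giving $L=g$ and every three-consecutive sum equal to $3$. Condition \eqref{eq:3sum} then reads $3\le g/2$, i.e.\ $g\ge6$, exactly as in Corollary~\ref{cor:sym}. In that range Theorem~\ref{thm:einstein} gives $\kappa\equiv-\lmax=0$.

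There is no real obstacle here. The only points needing care are the small-$g$ indexing in Step 1 (that $c_{j-1}$ and $c_{j+1}$ are genuinely distinct for $g\ge3$) and stating clearly that the $\kappa\equiv0$ conclusion is claimed only for $g\ge6$, since shorter cycles are unbalanced.
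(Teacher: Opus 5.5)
Your proposal is correct and is essentially the paper's argument. You read off the circulant $-I+\tfrac12(S+S^{-1})$ from \eqref{eq:RT}, diagonalise it by Fourier modes to get $\cos(2\pi k/g)-1$ with the constant $k=0$ mode on top, and invoke Theorem~\ref{thm:einstein} once the constant Perron vector satisfies \eqref{eq:3sum}, i.e.\ $3\le g/2$. Your side remark that this is the $\Psi\equiv0$ case of Theorem~\ref{thm:reduction} matches how the paper frames it, as the base case of the periodic reduction.
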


\noindent This is the base case of the periodic reduction, and reproduces the curvature of the
(unweighted) cycle computed by Lin--Lu--Yau \cite{LLY}; only the passage to a Bloch spectrum ---
the eigenmodes of the translation-invariant cycle operator, indexed by a phase $e^{2\pi ik/g}$
($k=0,\dots,g-1$), the $k=0$ constant mode being the Perron one --- on
the line graph is new.

For the homogeneous decorated cycle (degree $d$, identical feedback $\Psi$) the
recurrence \eqref{eq:recur} is constant-coefficient, $z_{j-1}+z_{j+1}=\mu(\lambda)z_j$
with
\[
\mu(\lambda)=\frac{d\lambda+2-2\Psi(\lambda)}{1+\Psi(\lambda)},
\]
and the periodic condition is $\mu(\lambda)=2\cos(2\pi k/g)$, $k=0,\dots,g-1$; the Perron
value is the $k=0$ root of $\mu(\lambda)=2$.

\begin{figure}[t]
\centering
\begin{tikzpicture}[scale=1.0,
  cv/.style={circle,draw,fill=black,inner sep=1.5pt},
  lf/.style={circle,draw,fill=white,inner sep=1.2pt}]
 \foreach \k in {0,...,5} {\node[cv] (n\k) at ({90+60*\k}:1.4) {};}
 \draw[very thick] (n0)--(n1)--(n2)--(n3)--(n4)--(n5)--(n0);
 \foreach \k in {0,...,5} {
   \node[lf] (a\k) at ({90+60*\k-12}:2.15) {};
   \node[lf] (b\k) at ({90+60*\k+12}:2.15) {};
   \draw (n\k)--(a\k); \draw (n\k)--(b\k);
 }
\end{tikzpicture}
\caption{The regular sun $S_{6,4}$: the cycle $C_6$ with $d-2=2$ pendant leaves at each
vertex (so each cycle vertex has degree $d=4$). By Theorem~\ref{thm:sun},
$\lmax=\tfrac{2(\sqrt{d-1}-1)}{d}$, independent of the girth $g$.}
\label{fig:sun}
\end{figure}

\begin{theorem}\label{thm:sun}
Let $S_{g,d}$ be the cycle $C_g$ with $d-2$ pendant leaves at each vertex (so every cycle
vertex has degree $d\ge3$; Figure~\ref{fig:sun}). Then, for $g\ge6$,
\[
\lmax(R_{S_{g,d}})=\frac{2\big(\sqrt{d-1}-1\big)}{d},
\]
independent of $g$, with a positive Perron vector; hence $\kappa(S_{g,d})=-\lmax<0$.
\end{theorem}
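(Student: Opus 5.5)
The plan is to exploit the homogeneity of $S_{g,d}$: guess a positive eigenvector that is constant on the cycle and constant on the leaves, find its eigenvalue from the $k=0$ condition $\mu(\lambda)=2$ of the periodic reduction, and then identify that eigenvalue as $\lmax$ using the last clause of Lemma~\ref{lem:pf}. This avoids having to compare all the roots of $\operatorname{tr}\Pi(\lambda)=2$ directly.

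First, the feedback. Every cycle vertex carries $p=d-2$ leaves, so Lemma~\ref{lem:feedback} gives $\Psi(\lambda)=(d-2)/(d\lambda+4)$ at every vertex. The $k=0$ condition $\mu(\lambda)=2$ reads $d\lambda+2-2\Psi=2+2\Psi$, that is, $d\lambda=4\Psi$. Writing $x=d\lambda$, this becomes
\[
x(x+4)=4(d-2),\qquad\text{i.e.}\qquad x^2+4x-4(d-2)=0 .
\]
Its positive root is $x=-2+2\sqrt{d-1}$, which gives $\lambda_0=2(\sqrt{d-1}-1)/d$. This value is strictly positive for $d\ge3$.

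Second, positivity. Rather than relying on the determinant factorisation, I will build the eigenvector directly. Set $z_c=1$ on every cycle edge. On each leaf edge set $z_\ell=T_v/(d\lambda_0+d+2)$, where $T_v=2+\sigma_v$ and $\sigma_v=\Psi(\lambda_0)\cdot2$; this is the root computation in the proof of Lemma~\ref{lem:feedback}. Since $\lambda_0>0$, we have $\Psi(\lambda_0)>0$ and every denominator is positive, so $z>0$. The leaf rows of $\RG z=\lambda_0z$ hold by construction. The cycle rows reduce to the recurrence \eqref{eq:recur} with constant $z_j$, which holds exactly because $\mu(\lambda_0)=2$. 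A direct check of one leaf row and one cycle row confirms this.

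Third, identification. By Lemma~\ref{lem:pf}, $\lmax(\RG)$ is the only eigenvalue of $\RG$ with a positive eigenvector, so $\lmax=\lambda_0$, and this holds for every $g$. As a sanity check, $\lambda_0$ avoids the pendant-block spectra: the leaf block at $v$ has eigenvalues $-(1+1/d)+(p-1)/d<0$ and $-1-2/d<0$, consistent with Theorem~\ref{thm:reduction}. Finally, for $g\ge6$, Corollary~\ref{cor:sym} shows that the Perron vector, being constant on the cycle, satisfies \eqref{eq:3sum}. Theorem~\ref{thm:einstein} then gives $\kappa\equiv-\lmax<0$.

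The only delicate point is the identification step. Going through $\operatorname{tr}\Pi=2$ would require showing that no other Bloch mode and no pendant-supported eigenvector exceeds $\lambda_0$. The explicit positive eigenvector together with the uniqueness clause of Perron--Frobenius (Lemma~\ref{lem:pf}) sidesteps this, so I expect the remaining work to be routine verification of the two row equations.
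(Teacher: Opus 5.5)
Your proposal is correct and follows the paper's proof. You compute $\Psi(\lambda)=(d-2)/(d\lambda+4)$, solve the $k=0$ condition $d\lambda=4\Psi(\lambda)$ for its positive root, and identify that root as $\lmax$ because its eigenvector (constant on the cycle, positive on the leaves) is positive. You then invoke Corollary~\ref{cor:sym} and Theorem~\ref{thm:einstein} for $g\ge6$, exactly as the paper does. Your explicit leaf coordinates $z_\ell=2/(d\lambda_0+4)$ and your appeal to the uniqueness clause of Lemma~\ref{lem:pf} only spell out the one-line identification step the paper states implicitly.
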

\begin{proof}
Here $\Psi(\lambda)=(d-2)/(d\lambda+4)$ by Lemma~\ref{lem:feedback}. The Perron condition
$\mu(\lambda)=2$ is $d\lambda+2-2\Psi=2+2\Psi$, i.e.\ $d\lambda=4\Psi(\lambda)$, giving
$d\lambda(d\lambda+4)=4(d-2)$, i.e.\ $d^2\lambda^2+4d\lambda-4(d-2)=0$. The positive root
is $\lambda=2(\sqrt{d-1}-1)/d$. It is the $k=0$ Bloch mode, whose eigenvector is constant
on the cycle and positive on the leaves, so it is the Perron value. By
Corollary~\ref{cor:sym}, $g\ge6$ places $S_{g,d}$ in the balanced regime, so
$\kappa=-\lmax$.
\end{proof}

\begin{corollary}\label{cor:flat}
The function $F(d)=2(\sqrt{d-1}-1)/d$ satisfies $F(d)\to0^+$ as $d\to\infty$, and over
real $d$ attains its maximum at $d=4+2\sqrt2\approx6.83$; over integers the maximum is
$F(7)=\tfrac{2(\sqrt6-1)}{7}$. Thus $\{S_{g,d}\}_{d}$ is a family of Einstein
unicyclic graphs with $\kappa\to0^-$, and $\lmax(R_{S_{g,d}})\le\tfrac{2(\sqrt6-1)}{7}$
for all $d\ge3$, independent of $g$.
\end{corollary}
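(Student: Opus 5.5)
This is a one-variable calculus problem followed by an appeal to Theorem~\ref{thm:sun}. The function is $F(d)=2(\sqrt{d-1}-1)/d$ on $d\ge 3$ (or $d>1$ for the real-variable statement).

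\emph{Step 1: the limit.} Write $F(d)=2\sqrt{d-1}/d-2/d$. Both terms tend to $0$ as $d\to\infty$, since $\sqrt{d-1}/d\le d^{-1/2}$. Positivity comes from $\sqrt{d-1}>1$ for $d>2$, so the limit is $0^+$.

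\emph{Step 2: the real maximiser.} Substitute $s=\sqrt{d-1}>1$, so $d=s^2+1$. Then
\[
F=\frac{2(s-1)}{s^2+1},
\]
and differentiating in $s$ gives
\[
\frac{d}{ds}\,\frac{s-1}{s^2+1}=\frac{(s^2+1)-2s(s-1)}{(s^2+1)^2}=\frac{-s^2+2s+1}{(s^2+1)^2}.
\]
This vanishes at $s=1+\sqrt2$. It is positive for $1<s<1+\sqrt2$ and negative beyond, so $F$ is unimodal with a unique maximum. The maximiser is $d=s^2+1=(1+\sqrt2)^2+1=4+2\sqrt2\approx 6.83$.

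\emph{Step 3: the integer maximiser.} Unimodality means only the two neighbouring integers need comparing:
\[
F(6)=\frac{\sqrt5-1}{3}\approx 0.41202,\qquad F(7)=\frac{2(\sqrt6-1)}{7}\approx 0.41412 .
\]
To make the comparison rigorous, I would show $7(\sqrt5-1)<6(\sqrt6-1)$, which is equivalent to $7\sqrt5+1<6\sqrt6$. Squaring both sides gives $246+14\sqrt5<216$? That is false, so this form of the comparison is wrong and I need to recheck it.

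Rewriting, $F(7)>F(6)$ is equivalent to $3\cdot 2(\sqrt6-1)>7(\sqrt5-1)$, i.e.\ $6\sqrt6-6>7\sqrt5-7$, i.e.\ $6\sqrt6+1>7\sqrt5$. Squaring gives $217+12\sqrt6>245$, i.e.\ $12\sqrt6>28$, i.e.\ $\sqrt6>7/3$. This holds because $54>49$. Hence the integer maximum is $F(7)$.

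\emph{Step 4: the consequence for suns.} Theorem~\ref{thm:sun} identifies $\lmax(R_{S_{g,d}})=F(d)$ for $g\ge6$, with $\kappa=-\lmax$ (balanced by Corollary~\ref{cor:sym}). So the bound $\lmax\le F(7)$ holds for all $d\ge3$, and $\kappa\to0^-$ follows from Step~1.

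For $g\le5$, where Theorem~\ref{thm:sun} is not stated, I read the bound $\lmax\le F(7)$ as still holding. The reason is that the Perron computation of $\RG$ via $\mu(\lambda)=2$ does not use $g$; only the Einstein interpretation needs $g\ge6$.

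The only step needing care is the exact comparison of $F(6)$ and $F(7)$ in Step 3. Everything else is routine.
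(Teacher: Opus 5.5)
Your argument is correct and follows the paper's own route: substitute $t=\sqrt{d-1}$, locate the critical point $t=1+\sqrt2$, and compare integer neighbours. Your sign analysis of the derivative supplies the unimodality that makes the paper's extra check $F(7)>F(8)$ unnecessary, and your remark that $\lmax(R_{S_{g,d}})=F(d)$ also for $g\le5$ is right, since the symmetric positive eigenvector of $\RG$ does not depend on $g$. In Step~3, simply delete the false start: it was a sign slip, because $7(\sqrt5-1)<6(\sqrt6-1)$ is equivalent to $7\sqrt5<6\sqrt6+1$, which is exactly the inequality you then verify.
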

\begin{proof}
Write $t=\sqrt{d-1}$, so $F=2(t-1)/(t^2+1)$; then $F'=0$ at $t^2-2t-1=0$, i.e.\
$t=1+\sqrt2$, $d=4+2\sqrt2$, and $F\to0$ as $t\to\infty$. Comparing integer neighbours,
$F(7)>F(6),F(8)$, giving the stated integer maximum.
\end{proof}

The value $\tfrac{2(\sqrt6-1)}{7}$ bounds only the leaf-decorated (sun) family; deeper
branches exceed it. The symmetric $2$-ary decorated $C_6$ --- each cycle vertex carrying a
binary tree of depth $2$, balanced by Corollary~\ref{cor:sym} --- is a genuine Einstein
metric whose $\lmax(\RG)$ exceeds this value. The correct universal bound is the same degree bound
as for trees, which we now prove.

\begin{figure}[t]
\centering
\begin{tikzpicture}[scale=1.05,
  cv/.style={circle,draw,fill=black,inner sep=1.6pt},
  lf/.style={circle,draw,fill=white,inner sep=1.3pt},
  a/.style={->,line width=0.9pt}]
 \foreach \k in {1,...,6} {\coordinate (p\k) at ({90+60*\k}:1.5);}
 \foreach \k in {1,...,6} {\pgfmathtruncatemacro{\n}{mod(\k,6)+1}\draw[a] (p\k)--(p\n);}
 \foreach \k in {1,...,6} {\node[cv] at (p\k) {};}
 \node[lf] (q1) at ($(p1)+(150:0.9)$) {}; \draw[a] (p1)--(q1);
 \node[lf] (q2) at ($(q1)+(120:0.72)$) {}; \draw[a] (q1)--(q2);
 \node[lf] (q3) at ($(q1)+(180:0.72)$) {}; \draw[a] (q1)--(q3);
 \node[lf] (r1) at ($(p4)+(330:0.9)$) {}; \draw[a] (p4)--(r1);
\end{tikzpicture}
\caption{The orientation used in Theorem~\ref{thm:degbound}: the cycle is directed (each
cycle vertex receives one in-edge from the cycle), and every pendant tree is oriented away
from the cycle. Because $|E|=|V|$, every vertex has exactly one in-edge (its
``parent''), the cycle absorbs the root, so there is no root-edge exception.}
\label{fig:orient}
\end{figure}

\begin{theorem}\label{thm:degbound}
For any unicyclic graph $G$ with maximum degree $\mathcal D=\max_v d_v$,
\[
\lmax(\RG)\ \le\
\begin{cases}
1-\dfrac{4}{\mathcal D}+\dfrac{2\sqrt{\mathcal D-1}}{\mathcal D}, & 2\le\mathcal D\le18,\\[8pt]
\dfrac{15+6\sqrt2}{19}, & \mathcal D\ge19,
\end{cases}
\]
the same bound as for trees \cite{BCH}.
\end{theorem}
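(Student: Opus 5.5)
The plan is to bound $\lmax(\RG)$ through the Rayleigh quotient of the symmetric matrix $\RG$, and to reduce the unicyclic case to exactly the local inequality used for trees in \cite{BCH}. The first step is the sum-of-squares identity, read off from \eqref{eq:RT}. For $z\in\R^E$ write $s_v=\sum_{e\ni v}z_e$. Then
\[
z^{\top}\RG z=\sum_{v}\frac{s_v^2}{d_v}-2\sum_{e=\{x,y\}}\Bigl(\frac1{\dx}+\frac1{\dy}\Bigr)z_e^2 .
\]
To check this, note that the term $s_v^2/d_v$ contributes $1/d_v$ to the diagonal entry of each edge at $v$, and $2/d_v$ to each pair of edges meeting at $v$. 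Subtracting twice the diagonal sum then gives the diagonal $-(1/\dx+1/\dy)$ required by \eqref{eq:RT}. So it suffices to show $\sum_v s_v^2/d_v\le\sum_e\bigl(2(1/\dx+1/\dy)+B\bigr)z_e^2$, where $B$ is the claimed bound.

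Next I fix the orientation of Figure~\ref{fig:orient}: the cycle is directed cyclically and every pendant tree is oriented away from the cycle. Because $|E|=|V|$, each vertex $v$ has exactly one in-edge $\mathrm{in}(v)$ and $d_v-1$ out-edges. Moreover, each edge $e=(x\to y)$ is the in-edge of exactly one vertex ($y$) and an out-edge of exactly one vertex ($x$). At each $v$ I split $s_v=z_{\mathrm{in}(v)}+\sum_{\mathrm{out}}z_f$ and apply weighted Cauchy--Schwarz with a free parameter $t_v>0$:
\[
s_v^2\le(1+t_v)\,z_{\mathrm{in}(v)}^2+\bigl(1+t_v^{-1}\bigr)(d_v-1)\sum_{f\ \mathrm{out\ of}\ v}z_f^2 .
\]
Summing over $v$ and regrouping by edges, the coefficient of $z_e^2$ for $e=(x\to y)$ becomes
\[
\frac{1+t_y}{\dy}+\frac{(1+t_x^{-1})(\dx-1)}{\dx},
\]
which depends only on the pair $(\dx,\dy)$ and the choice $t_v=t(d_v)$. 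This is precisely where unicyclicity helps. In a tree the root has no in-edge, so the edges at the root need separate treatment (the ``root exception'' of the tree theory). Here every vertex has a parent, so every edge is governed by the same two-vertex expression.

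It then remains to choose a function $t(d)$ such that, for all degrees $2\le\dx,\dy\le\mathcal D$,
\[
\frac{1+t(\dy)}{\dy}+\frac{(1+t(\dx)^{-1})(\dx-1)}{\dx}-\frac2{\dx}-\frac2{\dy}\le B(\mathcal D).
\]
I would take the same choice of $t(d)$ as in the tree proof of \cite{BCH}, since the per-edge inequality is identical to the one for non-root tree edges. The threshold at $\mathcal D=19$ and the constant $(15+6\sqrt2)/19$ should come out of that optimization unchanged. The natural first attempt is $t(d)=\sqrt{d-1}$, which balances the two Cauchy--Schwarz terms at a single vertex of degree $d$. At $\dx=\dy=d$ the two terms contribute
\[
\frac{1+\sqrt{d-1}}{d}+\frac{d-1+\sqrt{d-1}}{d}=1+\frac{2\sqrt{d-1}}{d},
\]
and subtracting $4/d$ gives exactly $1-4/d+2\sqrt{d-1}/d$, the small-$\mathcal D$ branch of the statement.

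The main obstacle is the mixed-degree maximization. One has to show that the worst edge over all pairs $(\dx,\dy)\le\mathcal D$ is the equal-degree case $\dx=\dy=\mathcal D$ when $\mathcal D\le18$, and that beyond that the maximum saturates at the value $(15+6\sqrt2)/19$. I expect this to require a cap such as $t(d)=t(19)$ for $d\ge19$, checked by a monotonicity argument in each variable, as in \cite{BCH}. The leaf edges ($\dy=1$) also need a check, since they have no out-edges at $y$ and so escape the pattern above. They only improve the coefficient, but this must be verified.
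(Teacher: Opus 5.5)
Your framework is the paper's own: the vertex decomposition, the orientation giving every vertex exactly one in-edge, the weighted Cauchy--Schwarz split, and the regrouping by edges. Your per-edge coefficient $\frac{t_y-1}{d_y}+\frac{(1+t_x^{-1})(d_x-1)-2}{d_x}$ is exactly the paper's $B(d_y,\alpha_y)+A(d_x,\alpha_x)$.

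The gap is in the step that carries the content of the bound, namely the choice of $t$, and the concrete choice you propose fails. With $t(d)=\sqrt{d-1}$ the head term $(\sqrt{d_y-1}-1)/d_y$ is half the sun function of Corollary~\ref{cor:flat}. It therefore peaks at $d_y=7$, not at $\mathcal D$, while the tail term $(d_x-3+\sqrt{d_x-1})/d_x$ is still increasing at $d_x=19$. Already for $\mathcal D=8$, consider a pendant-tree edge from a vertex of degree $8$ to a child of degree $7$; its orientation is forced. It receives coefficient $\frac{5+\sqrt7}{8}+\frac{\sqrt6-1}{7}\approx1.1628$, above the target $\frac12+\frac{\sqrt7}{4}\approx1.1614$. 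Taking tail degree $\min(\mathcal D,19)$ and head degree $7$ gives a violation for every $\mathcal D\ge8$. So the mixed-degree maximization you postpone is not a monotonicity check that happens to go through; for this $t$ it is false. Capping $t$ at $t(19)$ does not rescue it: for large tail degree the tail term alone then tends to $1+1/\sqrt{18}\approx1.2357$, leaving essentially no room below $(15+6\sqrt2)/19\approx1.2361$ for any positive head term.

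The missing idea is to make the head term independent of the vertex. Take $t_v=Kd_v+1$ with one constant $K$ depending only on $\mathcal D$. Then $(t_y-1)/d_y=K$ at every head $y$, leaves included, so leaves need no separate check. The coefficient of $z_e^2$ then depends only on the tail degree,
\[
C(d_x,K)=K+\frac{\bigl(1+\frac1{Kd_x+1}\bigr)(d_x-1)-2}{d_x},
\]
and the two-variable problem collapses to $\lmax(\RG)\le\max_{d\le\mathcal D}C(d,K)$. Take $K=K_m=(\sqrt{m-1}-1)/m$ with $m=\min(\mathcal D,19)$. The one-variable optimisation of \cite{BCH} then gives $\max_{d\le\mathcal D}C(d,K_m)=C(m,K_m)=1-\frac4m+\frac{2\sqrt{m-1}}m$.

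Since $K_m m+1=\sqrt{m-1}$, your equal-degree computation is recovered at the top degree. The two parameter choices differ only at lower degrees, which is exactly where yours breaks. Likewise the cap at $19$ is a cap on $K$, not on $t(d)$. For $\mathcal D\ge19$ the parameter $K_{19}d+1$ keeps growing linearly in $d$, so the tail term tends to $1$ and $C(d,K_{19})\to1+K_{19}<(15+6\sqrt2)/19$.
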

\begin{proof}
Collecting the diagonal and off-diagonal contributions of $\RG$ at each vertex gives, for
any $f\in\R^{E(G)}$, the vertex decomposition, valid for every graph,
\begin{equation}\label{eq:vdecomp}
\langle f,\RG f\rangle=\sum_{v\in V}\frac{\Sigma_v^2-2\Theta_v}{d_v},
\qquad \Sigma_v=\sum_{e\ni v}f_e,\quad \Theta_v=\sum_{e\ni v}f_e^2 .
\end{equation}
Since $|E(G)|=|V(G)|$, we may orient the edges so that every vertex has exactly one
in-edge: direct the unique cycle as a directed cycle, and orient each pendant tree away
from the cycle (Figure~\ref{fig:orient}). Then every vertex $v$ has a unique parent
edge $e_p(v)$ (its in-edge), and its remaining incident edges are children
$E_c(v)$; dually, each edge is the parent of exactly one vertex and a child of exactly one.
On a tree there is one fewer edge than vertices, so some vertex is a root with no in-edge, and
its edges carry a different, smaller coefficient \cite{BCH}; here the extra edge of the cycle
removes the root, every vertex has exactly one in-edge, and the argument below applies uniformly.

For any $\alpha_v>0$, the weighted Cauchy--Schwarz inequality
$\Sigma_v^2\le(1+\alpha_v)f_{e_p(v)}^2+(1+\tfrac1{\alpha_v})(d_v-1)\sum_{e\in E_c(v)}f_e^2$ turns
\eqref{eq:vdecomp} into
\[
\frac{\Sigma_v^2-2\Theta_v}{d_v}\ \le\ B(d_v,\alpha_v)\,f_{e_p(v)}^2+A(d_v,\alpha_v)\!\!\sum_{e\in E_c(v)}\!\!f_e^2,
\quad B(d,\alpha)=\frac{\alpha-1}{d},\ \ A(d,\alpha)=\frac{(1+\tfrac1\alpha)(d-1)-2}{d}.
\]
Summing over $v$ and regrouping by edge --- each $e=(u\!\to\!v)$ is a child of its tail $u$
and the parent of its head $v$ ---
\[
\langle f,\RG f\rangle\ \le\ \sum_{e=(u\to v)}\bigl(A(d_u,\alpha_u)+B(d_v,\alpha_v)\bigr)f_e^2 .
\]
With the linear choice $\alpha_d=Kd+1$ ($K>0$) we have $B(d,\alpha_d)=K$, so the coefficient
of $f_e^2$ equals $A(d_u,Kd_u+1)+K=C(d_u,K)$, where
$C(d,K)=\big((1+\tfrac1{Kd+1})(d-1)-2\big)/d+K$ depends only on the tail degree. Hence
\[
\lmax(\RG)=\max_{\|f\|=1}\langle f,\RG f\rangle\ \le\ \max_{v\in V}C(d_v,K).
\]
By the optimization of $C(d,K)$ in \cite{BCH}, taking $K=K_m:=(\sqrt{m-1}-1)/m$ with
$m=\min(\mathcal D,19)$ gives $\max_{1\le d\le\mathcal D}C(d,K_m)=C(m,K_m)=1-\tfrac4m+\tfrac{2\sqrt{m-1}}{m}$,
which is the asserted bound.

For each fixed $d$ the best constant this method can produce is
$B(d):=\min_K C(d,K)=1-\tfrac4d+\tfrac{2\sqrt{d-1}}{d}$. As a function of $d$, $B$ increases up to
its real maximum at $d=10+4\sqrt5\approx18.94$ (substituting $t=\sqrt{d-1}$, $B'=0$ reads
$t^2-4t-1=0$) and decreases towards $1$ beyond it, so over the integers its peak is $B(19)$.
Consequently, if $\mathcal D\le19$ the largest degree present, $\mathcal D$ itself, is the worst;
while if $\mathcal D\ge19$ the bound is already saturated at degree $19$ and larger degrees only
make $B$ smaller, not larger. Either way the effective degree is $m=\min(\mathcal D,19)$.
\end{proof}

\begin{proposition}\label{prop:sharp}
For every integer $d\ge3$ the bound of Theorem~\ref{thm:degbound} is asymptotically
attained on unicyclic graphs: there are unicyclic graphs $G_L$ of maximum degree $d$ with
\[
\lmax(R_{G_L})\ \xrightarrow[\ L\to\infty\ ]{}\ 1-\frac4d+\frac{2\sqrt{d-1}}{d}.
\]
\end{proposition}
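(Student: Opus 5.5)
We will build the graphs $G_L$ from a cycle by attaching to every cycle vertex a deep, nearly $(d-1)$-ary branch, and then feed an explicit test vector into the Rayleigh quotient. The upper bound needs no new work: Theorem~\ref{thm:degbound}, with maximum degree $d$, gives $\lmax(R_{G_L})\le 1-\tfrac4d+\tfrac{2\sqrt{d-1}}{d}=:B(d)$ for $d\le18$. For $d\ge19$ the statement as written asks for $B(d)$, which lies below $B(19)$, so the upper bound is still just the general one. It therefore suffices to show $\liminf_L\lmax(R_{G_L})\ge B(d)$.

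\textbf{Construction.} Take the cycle $C_g$ with $g\ge6$ fixed. At every cycle vertex attach $d-2$ copies of a complete $(d-1)$-ary tree of depth $L$, hung by an edge. Every internal vertex then has degree $d$, every cycle vertex has degree $d$, and every leaf has degree $1$. This graph is a regular decorated cycle, so it is balanced by Corollary~\ref{cor:sym}. That is not needed for the eigenvalue statement, but it makes the limit an Einstein curvature.

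\textbf{Lower bound.} Use the vertex decomposition \eqref{eq:vdecomp} with a radial test vector $f$. Set $f_e=x_k$ for an edge at depth $k$, counted from the cycle, with $f\equiv x_0$ on the cycle. At a degree-$d$ vertex of depth $k$ the local term is
\[
\frac{(x_k+(d-1)x_{k+1})^2-2(x_k^2+(d-1)x_{k+1}^2)}{d}.
\]
Weighting by the number $N_k\asymp(d-1)^k$ of vertices at depth $k$ and substituting $x_k=(d-1)^{-k/2}y_k$ turns the Rayleigh quotient into that of a path (Jacobi) operator on $y_0,\dots,y_L$. Its diagonal is the constant $(d-1-2)/d+((d-1)-2)\cdot\frac{1}{d}\cdot\frac1{d-1}\cdot(d-1)$; these constants have to be checked carefully. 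Its off-diagonal is $2\sqrt{d-1}/d$ between consecutive levels. The whole computation should reproduce exactly the choice $\alpha=\sqrt{d-1}$ that is optimal in Cauchy--Schwarz. Adding the diagonal constant to the sup of the off-diagonal symbol gives $\text{diag}+2\sqrt{d-1}/d=1-\tfrac4d+\tfrac{2\sqrt{d-1}}{d}$ in the bulk. Choose $y_k=\sin(\pi k/L)$, which vanishes at both the cycle end and the leaf end. Then the boundary terms (the leaves of degree $1$ and the cycle layer) carry weight $O(1/L^2)$ relative to the bulk. The Rayleigh quotient is therefore $B(d)-O(1/L^2)$, which gives the liminf.

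\textbf{Main obstacle.} The hardest step is the bookkeeping of the bulk constant. I must verify that the radial ansatz gives diagonal $1-\tfrac4d$ and hopping $\tfrac{2\sqrt{d-1}}{d}$ per level, and that the degree-$1$ leaf layer and the cycle layer contribute only boundary errors. My first approach is to mirror the tree sharpness construction of \cite{BCH} verbatim: the cycle replaces the root there, and the sine profile kills the root end in the same way.
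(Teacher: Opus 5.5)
\textbf{Gap in the upper bound for $d\ge20$.} Theorem~\ref{thm:degbound} with $\mathcal D=d\ge19$ only gives $\lmax(R_{G_L})\le\frac{15+6\sqrt2}{19}=B(19)$. For $d\ge20$ this is strictly larger than $B(d)=1-\frac4d+\frac{2\sqrt{d-1}}{d}$, because $B$ decreases beyond $d=10+4\sqrt5\approx18.94$. So for those $d$ your argument only yields $B(d)\le\liminf_L\lmax\le\limsup_L\lmax\le B(19)$, and the displayed limit is not established. Your sentence ``it therefore suffices to show $\liminf_L\lmax(R_{G_L})\ge B(d)$'' is false there.

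The missing observation is that the cap at $19$ is an artifact of the theorem having to cover every degree $1,\dots,\mathcal D$. Its proof actually shows $\lmax(R_G)\le\max_v C(d_v,K)$ for every $K>0$, where the maximum runs only over degrees that occur in $G$. In fact it runs only over tails of the orientation, so leaves never enter. Your $G_L$ has degrees $1$ and $d$ only. Take $K=K_d=(\sqrt{d-1}-1)/d$, so that $K_d d+1=\sqrt{d-1}$. This gives $\max\{C(1,K_d),C(d,K_d)\}=\max\{K_d-2,\,B(d)\}=B(d)$ for every $d\ge3$. This is exactly the step in the paper's proof, whose graph ($C_6$ with one $d$-regular branch, degrees in $\{1,2,d\}$) is handled the same way.

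\textbf{Lower bound.} Your lower bound is sound and takes a genuinely different route from the paper. The paper hangs a single $d$-regular tree of depth $L$ on $C_6$ and imports the radial test vector of \cite{BCH}, supported inside that branch. You instead decorate every cycle vertex identically, so $G_L$ is balanced by Corollary~\ref{cor:sym} and the limit is an actual Einstein curvature. You also carry out the reduction to a Jacobi path yourself, and the sine profile gives the sharper error $O(1/L^2)$.

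Do fix the bookkeeping, though: the diagonal expression you wrote evaluates to $2(d-3)/d$, not $1-\frac4d$. Count per bulk edge at level $k$. Its endpoint nearer the cycle, where it is one of $d-1$ child edges, contributes $\frac{d-3}{d}x_k^2$. Its farther endpoint, where it is the parent edge, contributes $-\frac1d x_k^2$. The total is $\frac{d-4}{d}=1-\frac4d$. Summed over a level, the cross terms $\frac{2(d-1)}{d}x_{k-1}x_k$ become $\frac{2\sqrt{d-1}}{d}y_{k-1}y_k$ after your rescaling, up to the common factor that also multiplies $\|f\|^2$. With these constants your conclusion $B(d)-O(1/L^2)$ stands.
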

\begin{proof}
Let $G_L$ be a cycle $C_6$ carrying, at one vertex, a $d$-regular tree of depth $L$ --- the
attachment vertex and every internal tree vertex have degree $d$, and the depth-$L$ vertices
are leaves. The vertex degrees of $G_L$ all lie in $\{1,2,d\}$, so running the proof of
Theorem~\ref{thm:degbound} with $K=K_d$ (rather than the capped $K_{\min(d,19)}$ of its
statement) bounds $\lmax(R_{G_L})$ by $\max\{C(1,K_d),C(2,K_d),C(d,K_d)\}=C(d,K_d)
=1-\tfrac4d+\tfrac{2\sqrt{d-1}}{d}$ for every $d\ge3$: the cap is needed only when all
intermediate degrees can occur. For the reverse inequality, take the
radial Rayleigh test vector of \cite{BCH} that yields
$\lmax(R_{T_{d,L}})\ge 1-\tfrac4d+\tfrac{2\sqrt{d-1}}{d}-O(1/L)$ on the $d$-regular tree ball
$T_{d,L}$; supported inside the branch, where every incident vertex of $G_L$ has degree $d$,
and extended by zero, it gives the same Rayleigh quotient $\langle x,R_{G_L}x\rangle/\|x\|^2$
in $G_L$, so $\lmax(R_{G_L})\ge 1-\tfrac4d+\tfrac{2\sqrt{d-1}}{d}-O(1/L)$. The two bounds
squeeze $\lmax(R_{G_L})$ to the stated limit. Depth is essential: the shallow regular sun
$S_{g,d}$ does not attain the bound, as its $\lmax=\tfrac{2(\sqrt{d-1}-1)}{d}$ is
strictly smaller (Theorem~\ref{thm:sun}).
\end{proof}

The degree bound is matched by a companion lower bound: $\lmax(\RG)$ is always
nonnegative, and only the bare cycle sits at the bottom.

\begin{proposition}\label{prop:lower}
For every unicyclic graph $G$,
\[
\lmax(\RG)\ \ge\ 0,
\]
with equality if and only if $G$ is a bare cycle $C_g$ (of any girth $g\ge3$). Together with
Theorem~\ref{thm:degbound} this brackets the spectrum,
\[
0\ \le\ \lmax(\RG)\ \le\ 1-\frac4m+\frac{2\sqrt{m-1}}{m},\qquad m=\min(\mathcal D,19),
\]
the lower end attained only by the bare cycle. Consequently, in the balanced regime
(Theorem~\ref{thm:einstein}) the Einstein curvature $\kappa=-\lmax(\RG)$ satisfies
$\kappa\le0$, with $\kappa=0$ if and only if $G=C_g$ with $g\ge6$: every non-bare balanced
unicyclic graph has strictly negative Einstein curvature, and the regular suns of
Corollary~\ref{cor:flat} approach flatness from below. (The short cycles $C_3,C_4,C_5$,
where $\RG$ is not the curvature operator, are the sole positively curved case;
Section~\ref{sec:short} and Figure~\ref{fig:phase}.)
\end{proposition}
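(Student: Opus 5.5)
We use the Rayleigh quotient and the vertex decomposition \eqref{eq:vdecomp}. For the lower bound, take the test vector $f=\mathbf 1_C$, equal to $1$ on the $g$ cycle edges and $0$ on every pendant edge. A non-cycle vertex has $\Sigma_v=\Theta_v=0$. A cycle vertex $v$ has $\Sigma_v=\Theta_v=2$, so its term in \eqref{eq:vdecomp} is $(4-4)/d_v=0$. Hence $\langle f,\RG f\rangle=0$, and therefore $\lmax(\RG)\ge 0$. For a bare cycle, Proposition~\ref{prop:bare} already gives $\lmax=0$, so equality holds there.

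For the converse, assume $G$ is not a bare cycle and suppose $\lmax(\RG)=0$. Then $\mathbf 1_C$ attains the maximum of the Rayleigh quotient, so it must be an eigenvector with eigenvalue $0$. By Lemma~\ref{lem:pf}, the only eigenvalue with a nonnegative nonzero eigenvector is $\lmax$, and that eigenvector is strictly positive. But $\mathbf 1_C$ vanishes on the pendant edges, which is a contradiction.

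A more direct check also works. Let $e=\{v,u\}$ be a pendant edge at a cycle vertex $v$, which exists because $G\neq C_g$. Then $(\RG\mathbf 1_C)_e=2/d_v\neq 0=\lmax\cdot 0$. So $\mathbf 1_C$ is not an eigenvector, and strict inequality $\lmax>0$ follows because a Rayleigh maximiser must be an eigenvector. One could also perturb explicitly: with $f=\mathbf 1_C+t\,\mathbf 1_e$, we get $\langle f,\RG f\rangle=2t\cdot\tfrac{2}{d_v}+O(t^2)>0$ for small $t>0$.

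The two-sided bracket is then the combination of this bound with Theorem~\ref{thm:degbound}, taking $m=\min(\mathcal D,19)$. For the curvature statement, in the balanced regime Theorem~\ref{thm:einstein} gives $\kappa=-\lmax\le 0$. Equality forces $G=C_g$. Conversely, $C_g$ is balanced only for $g\ge 6$, since Corollary~\ref{cor:sym} gives the condition $3\le g/2$. For $g\le5$, $C_g$ lies outside the balanced regime, as handled in Section~\ref{sec:short}. The statement about the suns is Corollary~\ref{cor:flat}.

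The only delicate point is the strictness argument. Lemma~\ref{lem:pf} (Perron–Frobenius, using irreducibility of $\RG+2I$) makes it immediate, so I expect no real obstacle. The rest is bookkeeping: checking that the vertex decomposition indeed gives zero for $\mathbf 1_C$, and computing the pendant-row entry $2/d_v$.
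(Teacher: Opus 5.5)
Your proof is correct, but it uses a different test vector from the paper, and this changes how both halves are argued.

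The paper plugs the all-ones vector $\mathbf 1$ on all edges into \eqref{eq:vdecomp}. There each vertex contributes $d_v-2$, and the sum vanishes only through the global count $\sum_v(d_v-2)=2|E|-2|V|=0$. For equality, the paper reads $\RG\mathbf 1=0$ row by row and obtains $1/d_x+1/d_y=1$ on every edge. This rules out leaves, and the handshake identity then forces $2$-regularity.

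Your cycle indicator $\mathbf 1_C$ makes every vertex contribute zero on its own, so $|E|=|V|$ is never used. Because $\mathbf 1_C$ already vanishes on the pendant edges, strictness follows directly in either of two ways. One is the positivity of the Perron vector (Lemma~\ref{lem:pf}); there you only need simplicity and positivity of the top eigenvector, not the stronger claim about nonnegative eigenvectors. The other, which avoids Perron--Frobenius, is the single entry $(\RG\mathbf 1_C)_e=2/d_v\neq0$ at one pendant edge, instead of an analysis of every row.

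What your route buys is locality. Run verbatim, the Perron--Frobenius version shows $\lmax(\RG)\ge0$ for any connected graph containing a cycle, with strict inequality unless the graph is that cycle. The paper's version keeps the identity $|E|=|V|$ in view, in parallel with the orientation argument of Theorem~\ref{thm:degbound}. It also never needs to single out the cycle.
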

\begin{proof}
Apply the vertex decomposition \eqref{eq:vdecomp} to the all-ones vector
$f\equiv\mathbf1$: then $\Sigma_v=\Theta_v=d_v$, each summand is $(d_v^2-2d_v)/d_v=d_v-2$, and
\[
\langle\mathbf1,\RG\mathbf1\rangle=\sum_{v\in V}(d_v-2)=2|E|-2|V|=0,
\]
since $|E|=|V|$. Hence $\lmax(\RG)=\max_{\|f\|=1}\langle f,\RG f\rangle\ge
\langle\mathbf1,\RG\mathbf1\rangle/\|\mathbf1\|^2=0$. If $\lmax(\RG)=0$, the maximum of the
Rayleigh quotient is attained at $\mathbf1$, so $\mathbf1$ is a top eigenvector and
$\RG\mathbf1=0$. The entry of $\RG\mathbf1$ at an edge $e=\{x,y\}$ is
\[
-\Big(\tfrac1{\dx}+\tfrac1{\dy}\Big)
+\Big(\tfrac{\dx-1}{\dx}+\tfrac{\dy-1}{\dy}\Big)=2-\tfrac2{\dx}-\tfrac2{\dy},
\]
so $\RG\mathbf1=0$ forces $\tfrac1{\dx}+\tfrac1{\dy}=1$ at every edge. A pendant vertex
($\dx=1$) would force $\tfrac1{\dy}=0$, which is impossible; hence $\delta(G)\ge2$, and
since $\sum_{v}d_v=2|E|=2|V|$ every vertex has degree exactly $2$, so $G$ is $2$-regular
and connected, a bare cycle. Conversely $R_{C_g}\mathbf1=0$ by
Proposition~\ref{prop:bare}. The curvature statement is Theorem~\ref{thm:einstein}
($\kappa=-\lmax$ in the balanced regime) with the balanced-cycle threshold $g\ge6$ of
Proposition~\ref{prop:bare}.
\end{proof}

Proposition~\ref{prop:bare} and Theorem~\ref{thm:sun} are the periodic analogues of the
tree results $\lmax(P_n)=-1+\cos(\pi/(n-1))$ and the $d$-regular tree limit
$1-4/d+2\sqrt{d-1}/d$ of \cite{BCH}: the cycle replaces the Dirichlet chain by a periodic
(Bloch) chain. Two different ``peak degrees'' appear above and should not be confused. The
sun value $2(\sqrt{d-1}-1)/d$ --- the best a one-layer decoration of leaves can do
--- peaks at $d=7$ (Corollary~\ref{cor:flat}). The universal bound
$B(d)=1-4/d+2\sqrt{d-1}/d$ of Theorem~\ref{thm:degbound} --- approached only by deep
$d$-regular branches (Proposition~\ref{prop:sharp}) --- peaks at $d=19$, which is why the
cap $m=\min(\mathcal D,19)$ appears in its statement. Shallow decorations peak at $7$, the
supremum over all decorations at $19$; both numbers are inherited from the tree theory.

\begin{figure}[h]
\centering
\begin{tikzpicture}[scale=1]
 \draw[->,line width=1pt] (-4.9,0)--(4.9,0) node[right,font=\small]{$\lmax(\RG)$};
 \draw (0,0.13)--(0,-0.13);
 \draw[line width=2.4pt,blue!55] (-4.6,0.02)--(-0.06,0.02);
 \draw[line width=2.4pt,red!62] (0.06,0.02)--(4.6,0.02);
 \fill (0,0.02) circle (2.1pt);
 \node[font=\small] at (-2.8,0.4) {$\lmax<0$};
 \node[font=\scriptsize,blue!55!black] at (-2.8,-0.36) {$\kappa>0$ (positive)};
 \node[font=\scriptsize,blue!55!black] at (-2.8,-0.74) {$C_3,C_4,C_5$};
 \node[font=\small] at (0,0.5) {$\lmax=0$};
 \node[font=\scriptsize] at (0,-0.74) {flat: $C_g\ (g\ge6)$};
 \node[font=\small] at (2.8,0.4) {$\lmax>0$};
 \node[font=\scriptsize,red!62!black] at (2.8,-0.36) {$\kappa<0$ (negative)};
 \node[font=\scriptsize,red!62!black] at (2.8,-0.74) {suns $S_{g,d}$};
\end{tikzpicture}
\caption{The curvature phase $\kappa=-\lmax(\RG)$ for unicyclic graphs: short cycles carry
positive curvature ($\lmax<0$), the bare cycle $C_g$ with $g\ge6$ is flat ($\lmax=0$,
Proposition~\ref{prop:bare}), and the regular suns give negative curvature ($\lmax>0$),
tending to flatness as $d\to\infty$ (Corollary~\ref{cor:flat}).}
\label{fig:phase}
\end{figure}

We close the section with a sharp balance criterion for a single-vertex defect (a long bare
cycle with one pendant tree at one vertex, as in Remark~\ref{rem:rho-unbounded}), pinning the
exact threshold left implicit in Proposition~\ref{prop:apriori}. Away from the decoration every
cycle edge obeys the same equation, and as $g\to\infty$ the eigenvalues these homogeneous
equations produce fill the interval $[-2,0]$ (Proposition~\ref{prop:bare}); this is the
continuous part of the spectrum, determined by the homogeneous cycle and unchanged by any local
modification. Attaching one pendant tree alters only finitely many entries of the matrix, and
such a perturbation leaves this interval in place but produces one isolated eigenvalue above it,
at a height $\lambda_\infty>0$ that stabilises as $g\to\infty$. We call $\lambda_\infty$ the
\emph{defect strength}, and phrase the balance criterion entirely through it.

\begin{figure}[t]
\centering
\begin{tikzpicture}[
  cv/.style={circle,draw,fill=black,inner sep=1.4pt},
  lf/.style={circle,draw,fill=white,inner sep=1.2pt}]
 \begin{scope}
   \foreach \k in {0,...,9} {\coordinate (p\k) at ({90-36*\k}:1.25);}
   \draw[thick] (p0)--(p1)--(p2)--(p3)--(p4)--(p5)--(p6)--(p7)--(p8)--(p9)--cycle;
   \foreach \k in {0,...,9} {\node[cv] at (p\k) {};}
   \node[lf] (h) at ($(p0)+(90:0.66)$) {}; \draw (p0)--(h);
   \node[lf] (ha) at ($(h)+(55:0.5)$) {}; \node[lf] (hb) at ($(h)+(125:0.5)$) {};
   \draw (h)--(ha); \draw (h)--(hb);
   \node[font=\scriptsize,below right] at (p0) {$v_0$};
   \node[font=\small] at (0,-1.75) {(a)\ $G_g$ (one defect)};
 \end{scope}
 \begin{scope}[shift={(5.9,-0.1)}]
   \foreach \k in {0,...,8} {\node[cv] (q\k) at ({(\k-4)*0.6},0) {};}
   \draw[thick] (q0)--(q8);
   \node[lf] (r) at (0,0.6) {}; \draw (q4)--(r);
   \node[lf] at (-0.34,1.12) {}; \node[lf] at (0.34,1.12) {};
   \draw (r)--(-0.34,1.12); \draw (r)--(0.34,1.12);
   \node[font=\scriptsize] at (-2.78,0) {$\cdots$};
   \node[font=\scriptsize] at (2.78,0) {$\cdots$};
   \node[font=\scriptsize,below] at (q4) {origin};
   \draw[blue!55,densely dashed] plot[smooth] coordinates
     {(-2.4,-0.12)(-1.8,-0.2)(-1.2,-0.33)(-0.6,-0.52)(0,-0.7)(0.6,-0.52)(1.2,-0.33)(1.8,-0.2)(2.4,-0.12)};
   \node[font=\scriptsize,blue!55!black] at (1.9,-0.62) {$\varphi_g\propto\mu^{-|j|}$};
   \node[font=\small] at (0,-1.75) {(b)\ $R_\infty$ (defect at origin)};
 \end{scope}
\end{tikzpicture}
\caption{A localized single-vertex defect: one rooted pendant tree on an otherwise bare
cycle $C_g$ (a), all other cycle vertices having degree $2$. As $g\to\infty$ the cycle unrolls to
the bi-infinite $2$-regular line carrying the same defect at the origin (b). The bare line has
continuous spectrum $[-2,0]$; the defect produces one isolated eigenvalue above it, whose
eigenvector $\varphi_g\propto\mu^{-|j|}$ localizes at $v_0$ (dashed), with eigenvalue the
defect strength $\lambda_\infty$ (Lemma~\ref{lem:bound-state}).}
\label{fig:defect}
\end{figure}

\begin{lemma}\label{lem:bound-state}
For every localized single-vertex defect the limit
$\lambda_\infty:=\lim_{g\to\infty}\lmax(R_{G_g})$ exists and is strictly positive.
\end{lemma}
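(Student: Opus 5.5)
The plan is to use the periodic reduction (Theorem~\ref{thm:reduction}) with all feedbacks trivial except at $v_0$, and compare with a bi-infinite line. Let $\Psi=\Psi_{v_0}(\lambda)$ be the feedback of the single pendant tree $B$ (Lemma~\ref{lem:feedback}) and $d=d_{v_0}\ge3$. Every other cycle vertex has degree $2$ and $\Psi_j=0$, so away from the two cycle edges $c_0,c_1$ at $v_0$ the recurrence \eqref{eq:recur} is the bare one, $z_{j-1}+z_{j+1}=(2\lambda+2)z_j$. For $\lambda>0$ we write $2(1+\lambda)=\mu+\mu^{-1}$ with $\mu>1$.

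\textbf{Monotonicity in $g$.} First I would show that $\lambda_g:=\lmax(R_{G_g})$ is positive and nondecreasing in $g$ for $g\ge g_0$. For positivity, Proposition~\ref{prop:lower} gives $\lmax\ge0$ with equality only for a bare cycle, and $G_g$ is not bare. For monotonicity, take the Perron vector of $G_g$ and build a test vector on $G_{g+1}$ by duplicating the coordinate of the cycle edge antipodal to $v_0$. The Rayleigh quotient then changes only through degree-$2$ rows, and one needs to check it does not decrease. This check is not obvious. If it fails, I would instead rely on the explicit equation below, where monotonicity is unnecessary.

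\textbf{Upper bound.} Theorem~\ref{thm:degbound} bounds $\lambda_g$ uniformly in $g$. So the sequence is bounded, and any limit point is positive provided I have a uniform positive lower bound.

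\textbf{Lower bound and explicit limit equation.} The cleanest route is via the symmetric Perron mode. By reflection symmetry through $v_0$, the Perron vector on the cycle has the form $z_j=A(\mu^{j}+\mu^{g-j})$ between the two edges at $v_0$. The gluing condition at $v_0$, namely the row of $c_1$ involving $P_0=(1+\Psi)/d$ and $Q$ with the $\Psi$ correction, becomes a scalar equation
\[
F_g(\lambda)=0 .
\]
This equation has the form $F_\infty(\lambda)+O(\mu^{-g})=0$, where $F_\infty$ is obtained by letting the decaying mode $\mu^{-|j|}$ satisfy the defect row alone. Concretely, I would equate the ratio $z_2/z_1=\mu^{-1}$ with the relation forced at $v_0$ by $P_0,Q_1$, which gives an explicit equation in $\lambda$ and $\Psi(\lambda)$.

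The step I expect to be the main obstacle is showing that $F_\infty$ has a root $\lambda_\infty>0$ which is simple, lies above the branch spectrum $\sigma(R_B)$, and is the largest root. Then the implicit function theorem or a sign-change argument gives $\lambda_g\to\lambda_\infty$.

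To get existence of the root, I would argue via a variational bound on the infinite line $R_\infty$. A truncated test vector $\mu_0^{-|j|}$ on the cycle, plus the branch coordinates, has Rayleigh quotient strictly above $0$ for suitable $\mu_0$ close to $1$. This is because the defect row gains $(d-2)/d>0$ in the quadratic form via \eqref{eq:vdecomp}, while the bare part costs only $O(\log\mu_0)^2$ times the mass. This gives $\liminf\lambda_g\ge c>0$ and shows the top of the spectrum is an isolated eigenvalue above $[-2,0]$, hence a root of $F_\infty$.

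To finish, I would take any limit point of $\lambda_g$: it solves $F_\infty=0$ and lies above $0$. Simplicity of the Perron eigenvalue of $R_\infty$, which follows by Perron--Frobenius on the $\ell^2$ bound state, identifies this limit point uniquely. Hence the limit $\lambda_\infty$ exists and is strictly positive.
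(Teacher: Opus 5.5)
Your route (a scalar secular equation $F_g(\lambda)=0$ from the Schur/transfer reduction, tracked to $F_\infty$) differs from the paper's purely variational proof, and as written its decisive step, identifying the limit, does not go through. You know that every limit point $\lambda^\ast$ of $\lambda_g$ is a positive root of $F_\infty$, and that the top eigenvalue of $R_\infty$ is simple. Together these do not give $\lambda^\ast=\sup\sigma(R_\infty)$. Simplicity concerns that one eigenvalue only, while $F_\infty$ can have several positive roots: one for each reflection-symmetric eigenvalue of $R_\infty$ above $0$ whose eigenvector does not vanish on the cycle. A deep regular branch already produces many of these through its radial modes. Nothing in your argument stops $\lambda_g$ from accumulating at a lower root. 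Likewise, a bound $\liminf\lambda_g\ge c$ with some unspecified $c>0$ does not keep limit points away from the poles of $\Psi$ in $\sigma(R_B)$, where your reduction breaks down; this is the obstacle you flagged and left open.

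The missing ingredient is a lower bound against the \emph{top} of the spectrum. Transplant finitely supported near-maximisers of the Rayleigh quotient of $R_\infty$ into $G_g$; the form \eqref{eq:vdecomp} is local, so nothing changes for large $g$, and you get $\liminf\lambda_g\ge\sup\sigma(R_\infty)$. Moreover $\sup\sigma(R_\infty)>\lmax(R_B)$, because the zero-extended Perron vector of $R_B$ is not an eigenvector of $R_\infty$ (its row at a cycle edge at $v_0$ is positive). Hence limit points avoid the poles. For the reverse inequality, either use your observation that a root of $F_\infty$ there is an $\ell^2$ eigenvalue, or do as the paper does. Its route is to extend the Perron vector $\varphi_g$ by zero to the line and apply min--max. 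This works because its reflection-symmetric $\cosh$ profile is $O(\mu_g^{-g/2})$ at the antipode, uniformly once $\lambda_g$ is bounded below. This two-sided squeeze is the paper's entire proof. It makes the monotonicity, the degree bound and the implicit function theorem unnecessary; your secular equation then becomes only a bonus that characterises $\lambda_\infty$ explicitly. If you want Perron--Frobenius to do the identification instead, the relevant fact is that only the top eigenvalue admits a nonnegative $\ell^2$ eigenvector, applied to a limit of the normalised positive $\varphi_g$. That is positivity, not simplicity.

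Second, the positivity of your test vector is misattributed. The number $(d-2)/d$ is the row $(R\mathbf 1)_c$ at a cycle edge $c$ at $v_0$, not a net gain of the quadratic form. The pendant-edge rows (e.g.\ $-2/d_u$ at a leaf edge) compensate it exactly: with all branch coordinates equal to $1$, the defect contributes $\sum_{\mathrm{defect}}(d_v-2)=0$ to \eqref{eq:vdecomp}, the identity behind Proposition~\ref{prop:lower}. The negative bare tails then make your Rayleigh quotient negative. You have to lower some branch coordinate, using that the gradient of the form at a leaf edge is $-4/d_u\neq0$. The paper puts $(d_u-1)/(d_u+1)$ on one leaf edge. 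For a pure leaf decoration, putting $t$ on each of the $d-2$ leaf edges (cycle edges at $v_0$ equal to $1$) yields $4(d-2)t(1-t)/d$, which equals your $(d-2)/d$ only at $t=\tfrac12$. Finally, calling the top of $\sigma(R_\infty)$ an isolated eigenvalue presupposes $\sigma_{\mathrm{ess}}(R_\infty)=[-2,0]$. That needs Weyl's theorem for the finite-rank defect, which you use without saying so.
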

\begin{proof}
First we build the $g\to\infty$ limit object. Unroll the cycle into the two-sided infinite
path $\cdots,v_{-1},v_0,v_1,\cdots$ (every vertex of degree $2$) and attach the same pendant
tree at $v_0$: this infinite graph is what the defect graphs $G_g$ look like near $v_0$ once
$g$ is large (Figure~\ref{fig:defect}(b)). Let $R_\infty$ be the matrix \eqref{eq:RT} on its
edge set: the entries only involve degrees and incidences, so the definition makes sense
verbatim; as every row has uniformly bounded entries and finitely many of them, $R_\infty$ is
a bounded self-adjoint operator on $\ell^2(E)$, with
quadratic form $\langle f,R_\infty f\rangle=\sum_v(\Sigma_v^2-2\Theta_v)/d_v$ by \eqref{eq:vdecomp}.

The proof has three parts.

First, the essential spectrum of $R_\infty$ is $[-2,0]$. Away from the defect every edge has
both endpoints of degree $2$, so $R_\infty$ there coincides with the bare operator
$(R_0z)_j=-z_j+\tfrac12(z_{j-1}+z_{j+1})$. That operator has symbol $-1+\cos\theta$ and
spectrum $[-2,0]$. The defect changes only finitely many entries and adds only finitely many
coordinates, hence it is a finite-rank perturbation. By Weyl's theorem for self-adjoint
operators \cite{Teschl}, the essential spectrum is invariant under finite-rank perturbations.
Therefore
\[
\sigma_{\mathrm{ess}}(R_\infty)=\sigma_{\mathrm{ess}}(R_0)=[-2,0],
\]
and any spectral value above $0$ must be an isolated eigenvalue of finite multiplicity.

Second, such an eigenvalue exists. We construct a test vector $\psi\in\ell^2(E)$ with
positive Rayleigh quotient. Fix a leaf edge $\{u,\ell\}$ of the pendant tree. Set
$\psi=1$ on every cycle edge within distance $L$ of $v_0$, let $\psi$ decay linearly to $0$
over the next $L$ edges on each side, and on the defect set $\psi=1$ on every edge except
$\psi=(d_u-1)/(d_u+1)$ on $\{u,\ell\}$. Using \eqref{eq:vdecomp}, every degree-$2$ vertex off the
decay zones contributes $0$, while each decay zone contributes only $-O(1/L)$. The defect
contribution is strictly positive: at the all-ones configuration it is
$\sum_{\mathrm{defect}}(d_v-2)=0$, but its derivative with respect to the weight of
$\{u,\ell\}$ is $-4/d_u\neq0$. Hence choosing $\psi=(d_u-1)/(d_u+1)$ makes the defect term
positive (for a single leaf the weight is $\tfrac12$, giving defect contribution $\tfrac13$).
For large $L$, the negative tail terms are
negligible, so $\langle\psi,R_\infty\psi\rangle>0$. It follows that
\[
\lambda^{\mathrm{op}}_\infty:=\sup\sigma(R_\infty)\ge \frac{\langle\psi,R_\infty\psi\rangle}{\|\psi\|^2} > 0,
\]
and this positive value is an isolated eigenvalue above the essential spectrum.

Third, the finite graphs $G_g$ converge to this limit. Let $\psi_\infty$ be the positive
$\ell^2$ eigenvector of $R_\infty$ at $\lambda^{\mathrm{op}}_\infty$, and write $\mu>1$ for
the larger root of $\mu+\mu^{-1}=2(1+\lambda^{\mathrm{op}}_\infty)$. Off the defect
$\psi_\infty$ satisfies the homogeneous recurrence
$z_{j-1}+z_{j+1}=2(1+\lambda^{\mathrm{op}}_\infty)z_j$, so it is the decaying mode and
satisfies $|\psi_\infty(j)|\le C\mu^{-|j|}$. Truncate $\psi_\infty$ at distance
$\lfloor g/2\rfloor$ and transplant it to $G_g$. This test function gives
\[
\lmax(R_{G_g})\ge \lambda^{\mathrm{op}}_\infty - O(\mu^{-g}),
\]
since the only error comes from the truncation, which is quadratic in a tail of order
$\mu^{-g/2}$.

For the reverse inequality, let $\lambda_g=\lmax(R_{G_g})$ and let $\varphi_g$ be its
Perron eigenvector. For large $g$ the top eigenvalue is simple (Lemma~\ref{lem:pf}), so
$\varphi_g$ is positive and invariant under the reflection fixing $v_0$ and the
antipode. Away from the defect it therefore satisfies the same homogeneous recurrence, forcing the
profile
\[
\varphi_g(c_j)\propto\cosh\!\big(\kappa_g(j-\tfrac{g+1}2)\big),\qquad \mu_g=e^{\kappa_g}=\mu(\lambda_g)>1.
\]
In particular, the value at the antipode is $O(\mu_g^{-g/2})\|\varphi_g\|_\infty$. Extend
$\varphi_g$ by zero to the infinite line; then $R_\infty$ and $R_{G_g}$ differ only at the
antipode, and the resulting quadratic discrepancy is $O(\mu_g^{-g})$. Thus
\[
\langle\varphi_g,R_\infty\varphi_g\rangle=
\lambda_g\|\varphi_g\|^2+O(\mu_g^{-g}).
\]
By the min--max principle,
\[
\lambda_g\le \lambda^{\mathrm{op}}_\infty + O(\mu_g^{-g}).
\]
Combining the two bounds and using the uniform fact
$\mu_g\ge\mu(\lambda^{\mathrm{op}}_\infty/2)>1$ for large $g$, we conclude
$\lambda_g\to\lambda^{\mathrm{op}}_\infty=:\lambda_\infty>0$.
\end{proof}

\begin{proposition}\label{prop:threshold}
Let $G_g$ be a localized single-vertex defect: the cycle $C_g$ carrying one fixed
rooted pendant tree at a single vertex $v_0$, every other cycle vertex having degree
$2$. By Lemma~\ref{lem:bound-state} the limit $\lambda_\infty:=\lim_{g\to\infty}\lmax(R_{G_g})$ exists and is positive;
let $\mu>1$ be the larger root of $\mu+\mu^{-1}=2(1+\lambda_\infty)$. Then the limiting
balance ratio is the closed form
\[
r_\infty:=\lim_{g\to\infty}\ \max_j\frac{w^\ast(c_{j-1})+w^\ast(c_j)+w^\ast(c_{j+1})}{L}
\ =\ g_{\!*}(\mu),\qquad
g_{\!*}(\mu):=\frac{(2\mu+1)(\mu-1)}{2\mu^2},
\]
and $g_{\!*}$ is strictly increasing with $g_{\!*}(\mu)=\tfrac12\iff\mu^2-\mu-1=0\iff
\mu=\varphi$ (the golden ratio). Equivalently, writing $\lambda_\infty=(\mu-1)^2/(2\mu)$,
\[
r_\infty\ \lessgtr\ \tfrac12
\qquad\Longleftrightarrow\qquad
\lambda_\infty\ \lessgtr\ \varphi-\tfrac32=\tfrac{\sqrt5-2}{2}\approx0.11803 .
\]
Consequently, if $\lambda_\infty<\varphi-\tfrac32$ then for all sufficiently large $g$ the
Perron metric $w^\ast$ is balanced \eqref{eq:3sum}, hence a discrete Einstein metric with
$\kappa=-\lmax(R_{G_g})$ (Theorem~\ref{thm:einstein}). The operative quantity is thus the
local defect strength $\lambda_\infty$, not the global Perron spread $\rho_G$ of
Proposition~\ref{prop:apriori}.
\end{proposition}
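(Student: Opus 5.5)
The plan is to compute the balance ratio exactly for each finite $g$ from an explicit profile of the Perron vector on the cycle, and then let $g\to\infty$ using Lemma~\ref{lem:bound-state}.

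\emph{Exact profile.} Label the cycle edges so that $c_1=\{v_0,v_1\}$ and $c_g=\{v_{g-1},v_0\}$ are the two edges at the decorated vertex. For $2\le j\le g-1$ the edge $c_j$ has both endpoints of degree $2$, and it touches no pendant edge. So the row of $R_{G_g}w^\ast=\lambda_g w^\ast$ at $c_j$ is exactly the homogeneous recurrence
\[
w_{j-1}+w_{j+1}=2(1+\lambda_g)\,w_j=(\mu_g+\mu_g^{-1})\,w_j ,
\]
with $\mu_g>1$ because $\lambda_g>0$ (Proposition~\ref{prop:lower}). The Perron vector is simple (Lemma~\ref{lem:pf}), so it is invariant under the reflection of $G_g$ fixing $v_0$, which maps $c_j\leftrightarrow c_{g+1-j}$. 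A symmetric solution of the recurrence on $j=1,\dots,g$ must therefore have the form
\[
w^\ast(c_j)=A\cosh\!\big(\kappa_g(j-\tfrac{g+1}{2})\big),\qquad e^{\kappa_g}=\mu_g .
\]
This profile is exact for all $j=1,\dots,g$, not just asymptotic: the recurrence is imposed only at $2\le j\le g-1$, and a second-order recurrence is determined by its values at $c_1,c_2$.

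\emph{Locating the maximal window.} Since $\cosh$ is even and increasing in $|\cdot|$, the weights decrease from the two edges at $v_0$ towards the antipode. Any three-edge window is therefore dominated by a window containing both $c_g$ and $c_1$, namely $\{c_g,c_1,c_2\}$ or its mirror image $\{c_{g-1},c_g,c_1\}$. A short rearrangement argument checks this: compare each window with the one obtained by shifting it toward $v_0$. Hence
\[
r_g=\frac{2w^\ast(c_1)+w^\ast(c_2)}{\sum_j w^\ast(c_j)} .
\]
Both numerator and denominator are finite geometric sums of $\mu_g^{\pm j}$, so $r_g$ is explicit.

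\emph{Limit and threshold.} By Lemma~\ref{lem:bound-state}, $\lambda_g\to\lambda_\infty$, so $\mu_g\to\mu$. After normalising $w^\ast(c_1)=1$ we have $w^\ast(c_{1+k})/w^\ast(c_1)\to\mu^{-k}$ uniformly with summable tails, since the cosh profile is dominated by $C\mu_g^{-k}$ with $\mu_g$ bounded away from $1$. This gives
\[
L\to\frac{2\mu}{\mu-1},\qquad 2+\mu^{-1}=\frac{2\mu+1}{\mu},\qquad r_\infty=\frac{(2\mu+1)(\mu-1)}{2\mu^2}=g_{\!*}(\mu).
\]
Writing $g_{\!*}(\mu)=1-\tfrac1{2\mu}-\tfrac1{2\mu^2}$ shows that $g_{\!*}$ is strictly increasing, and $g_{\!*}=\tfrac12$ exactly when $\mu^2=\mu+1$, i.e.\ $\mu=\varphi$. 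The relation $\mu+\mu^{-1}=2(1+\lambda_\infty)$ gives $\lambda_\infty=(\mu-1)^2/(2\mu)$, which is increasing in $\mu>1$. At $\mu=\varphi$ it equals $\varphi^{-2}/(2\varphi)=1/(2\varphi^3)=1/(2(2+\sqrt5))=(\sqrt5-2)/2=\varphi-\tfrac32$, which yields the stated equivalence.

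\emph{Conclusion.} If $\lambda_\infty<\varphi-\tfrac32$, then $r_\infty<\tfrac12$, so $r_g<\tfrac12$ for all large $g$. Then \eqref{eq:3sum} holds at every cycle edge, and Theorem~\ref{thm:einstein} applies.

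The main delicacy is the middle step: justifying the exact cosh profile needs simplicity plus the reflection symmetry. One must also confirm that the maximising window is the one centred next to $v_0$ rather than some other window. Everything else is explicit algebra.
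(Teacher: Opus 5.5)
Your proposal is correct and follows essentially the same route as the paper: simplicity of the Perron vector plus the reflection fixing $v_0$ forces the exact $\cosh$ profile on the cycle, the maximal window is $\{c_g,c_1,c_2\}$, and the geometric-tail limit gives $g_{\!*}(\mu)$ and the golden-ratio threshold. You are slightly more careful than the paper in two places. You distinguish the finite-$g$ root $\mu_g$ from its limit $\mu$. You also justify passing to the infinite tail sum by dominating the profile with $C\mu_g^{-k}$, using that $\mu_g$ stays bounded away from $1$.
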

\begin{proof}
Let the cycle edges be $c_1,\dots,c_g$ with $c_1,c_g$ the two edges at $v_0$. The
reflection $v_k\mapsto v_{g-k}$ fixes the pendant tree pointwise and reverses the cycle, so
it is an automorphism of $G_g$; as $\RG$ depends only on degrees and incidence, its
permutation matrix commutes with $\RG$. The Perron vector $w^\ast$ is unique and positive
(Lemma~\ref{lem:pf}), hence fixed by the reflection: $w^\ast(c_j)=w^\ast(c_{g+1-j})$, and in
particular $w^\ast(c_1)=w^\ast(c_g)$, a symmetric doublet at $v_0$. Index the
cycle edges $j=1,2,\dots$ outward from $c_1$. At any cycle edge with both endpoints of
degree $2$ the eigen-equation is $-w_j+\tfrac12(w_{j-1}+w_{j+1})=\lmax\,w_j$, i.e.\ the
homogeneous recurrence $w_{j-1}+w_{j+1}=2(1+\lmax)w_j$ (the $d=2,\ \Psi=0$ case of \eqref{eq:recur}),
with characteristic roots $\mu^{\pm1}$, $\mu+\mu^{-1}=2(1+\lmax)$. By the reflection
symmetry the solution is the even combination
$w_j\propto\cosh\!\big(\kappa\,(j-\tfrac{g+1}2)\big)$, $\mu=e^{\kappa}$, symmetric about the
$v_0$ and the antipode; letting $g\to\infty$ with $j$ fixed gives
$w_j/w_1\to\mu^{-(j-1)}$ (the growing mode is suppressed by $\mu^{-(g+1)}\to0$, using
$\mu>1$). Since $\cosh$ decreases monotonically from $v_0$ to the antipode, the
maximal three-edge window is $\{c_g,c_1,c_2\}$, of weight
$w_{\mathrm p}(2+\mu^{-1})$ with $w_{\mathrm p}:=w^\ast(c_1)$, while the perimeter is the
doublet plus two geometric tails,
\[
L=2w_{\mathrm p}+2w_{\mathrm p}\sum_{k\ge1}\mu^{-k}
=2w_{\mathrm p}\,\frac{\mu}{\mu-1}
\qquad\Big(\textstyle\sum_{k\ge1}\mu^{-k}=\tfrac1{\mu-1}\Big).
\]
Hence $r_\infty=(2+\mu^{-1})(\mu-1)/(2\mu)=(2\mu+1)(\mu-1)/(2\mu^2)=g_{\!*}(\mu)$. Finally
$g_{\!*}(\mu)=1-\tfrac1{2\mu}-\tfrac1{2\mu^2}$ has $g_{\!*}'(\mu)=\tfrac1{2\mu^2}+\tfrac1{\mu^3}>0$,
and $g_{\!*}(\mu)=\tfrac12$ reads $\mu^2-\mu-1=0$, i.e.\ $\mu=\varphi$; from
$\lambda=(\mu-1)^2/(2\mu)$ this is $\lambda=\varphi-\tfrac32$. The balance criterion
\eqref{eq:3sum} is $r\le\tfrac12$, so $\lambda_\infty<\varphi-\tfrac32$ gives $r_\infty<\tfrac12$
and hence $r(g)<\tfrac12$ for all large $g$; Theorem~\ref{thm:einstein} makes $w^\ast$
Einstein.
\end{proof}

The golden ratio here is not a numerical coincidence. Normalise the weight at $v_0$ to $1$, so
that along each side of the cycle the Perron weights are (in the limit) the geometric
sequence $1,\mu^{-1},\mu^{-2},\dots$. In the balance condition \eqref{eq:3sum} at $v_0$,
the three-edge window and half the perimeter share their first terms, and cancelling them
leaves exactly
\[
1\ \lessgtr\ \mu^{-2}+\mu^{-3}+\cdots=\frac{1}{\mu(\mu-1)}:
\]
the weight at $v_0$ against the total weight sitting two or more steps away on one side.
Equality is $\mu(\mu-1)=1$, i.e.\ $\mu^2=\mu+1$, the defining identity of $\varphi$. So
the threshold expresses the standard self-similarity of the golden ratio (the first term of
a geometric sequence equals the sum of its tail from two steps on), surfacing here because
the Perron profile of a defect is geometric.

\begin{remark}\label{rem:threshold-exist}
Proposition~\ref{prop:threshold} concerns balance --- whether $\RG$ computes the
curvature --- not existence. Above the threshold ($\lambda_\infty>\varphi-\tfrac32$) the
Perron metric is unbalanced and $\RG$ no longer computes the Einstein metric; the metric may
nonetheless exist, in a strongly anisotropic form that the locally constrained Perron vector
cannot realise, and with no edge curvature-pinned as in Example~\ref{ex:nonexist}. So the
golden-ratio threshold separates ``$\RG$ computes the metric'' from ``the metric exists but
$\RG$ does not'', not existence from non-existence (cf.\ Remark~\ref{rem:exist}).
\end{remark}

\begin{conjecture}\label{conj:general-threshold}
The sufficient direction of Proposition~\ref{prop:threshold} holds for every
decoration: if $\lambda_\infty:=\lim_{g\to\infty}\lmax(\RG)<\varphi-\tfrac32$, then the graph
is balanced --- and $w^\ast$ is Einstein --- for all sufficiently large $g$.
\end{conjecture}
For a general decoration $r_\infty$ is no longer a function of $\lambda_\infty$ alone, but the
defects interact weakly: well-separated equal defects roughly split the balance ratio (so
spreading a decoration helps balance), unequal defects are governed by the strongest one, and
only mutually adjacent defects raise $r_\infty$ above the single-vertex value. The extremal
configuration is two adjacent leaves, which still stays balanced. A proof would bound the
(closed-form) adjacent coupling boost below the balance margin $\tfrac12-g_{\!*}(\mu)$
throughout $\lambda_\infty<\varphi-\tfrac32$.

\section{Short cycles}\label{sec:short}

For $g\le5$ the balanced criterion \eqref{eq:3sum} cannot hold: the three consecutive edges
$c_{j-1},c_j,c_{j+1}$ centred at $c_j$ form an arc of three edges, while the complementary arc
avoiding $c_j$ has only $g-3\le2$ edges, so near the uniform weighting the three-edge arc is the
longer one and exceeds half of the perimeter. By
Proposition~\ref{prop:balanced} no cycle edge then obeys the tree formula. Because the avoiding
arc is the shorter of the two, it is the geodesic between the outer neighbours $j{-}1$ and
$j{+}2$, and $d(j{-}1,j{+}2)$ is its length. We compute the
curvature directly. Fix $C_g$ ($g\le5$) with edges $c_0,\dots,c_{g-1}$, $c_j=\{j,j{+}1\}$
of weight $w_j$ (indices mod $g$); the two outer cycle neighbours of $c_j$ are the
vertices $j{-}1$ and $j{+}2$.

\begin{proposition}\label{prop:short}
Suppose the  edge $c_j$ and the arc from $j{-}1$ to $j{+}2$ avoiding $c_j$ are
geodesics between their endpoints (this holds on a neighbourhood of the uniform
weighting). Then
\[
\kappa_{c_j}=\frac12\Big(3-\frac{d(j{-}1,\,j{+}2)}{w_j}\Big),
\]
$d(j{-}1,j{+}2)$ being the length of that avoiding arc. Explicitly,
\[
C_3:\ \kappa_{c_j}\equiv\tfrac32;\qquad
C_4:\ \kappa_{c_j}=\tfrac12\Big(3-\tfrac{w_{j+2}}{w_j}\Big);\qquad
C_5:\ \kappa_{c_j}=\tfrac12\Big(3-\tfrac{w_{j+2}+w_{j+3}}{w_j}\Big).
\]
In particular $\kappa_{c_j}w_j=\tfrac12\big(3w_j-d(j{-}1,j{+}2)\big)$ is linear in $w$, so
in this regime $\RG$ is the symmetric matrix
\[
C_3:\ -\tfrac32 I;\qquad
C_4:\ -\tfrac32 I+\tfrac12 P_{\mathrm{opp}};\qquad
C_5:\ -\tfrac32 I+\tfrac12 P_{2},
\]
where $P_{\mathrm{opp}}$ couples opposite edges and $P_2$ couples edges at line-graph
distance $2$: explicitly, in the edge bases $(c_0,c_1,c_2,c_3)$ and $(c_0,\dots,c_4)$,
\[
P_{\mathrm{opp}}=\begin{pmatrix}0&0&1&0\\0&0&0&1\\1&0&0&0\\0&1&0&0\end{pmatrix},
\qquad
P_{2}=\begin{pmatrix}0&0&1&1&0\\0&0&0&1&1\\1&0&0&0&1\\1&1&0&0&0\\0&1&1&0&0\end{pmatrix},
\]
i.e.\ $(P_{\mathrm{opp}})_{jk}=1$ iff $k=j+2\ (\mathrm{mod}\ 4)$ and $(P_2)_{jk}=1$ iff
$k=j\pm2\ (\mathrm{mod}\ 5)$. The uniform metric is Einstein with
$\kappa=\tfrac32,\,1,\,\tfrac12$ respectively (Figure~\ref{fig:short}).
\end{proposition}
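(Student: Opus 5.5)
The plan is to compute the Lin--Lu--Yau curvature of $c_j$ directly from \eqref{eq:wasserstein}. I will exhibit a transport plan and a $1$-Lipschitz Kantorovich potential with equal values. All degrees are $2$.

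Write $x=j$, $y=j{+}1$, $\varepsilon=1-\alpha$, $w=w_j$ and $D=d(j{-}1,j{+}2)$, so that $D=0$ for $C_3$, where $j{-}1=j{+}2$. By \eqref{eq:measure},
\[
\mu_x^\alpha-\mu_y^\alpha=\big(\alpha-\tfrac\varepsilon2\big)(\delta_x-\delta_y)+\tfrac\varepsilon2\,(\delta_{j-1}-\delta_{j+2}).
\]
For $\alpha$ close to $1$ we have $\alpha-\varepsilon/2>0$.

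\emph{Upper bound.} Use the plan that moves mass $\alpha-\varepsilon/2$ from $x$ to $y$ at cost $d(x,y)=w$ (the edge is a geodesic by hypothesis). It also moves mass $\varepsilon/2$ from $j{-}1$ to $j{+}2$ at cost $D$ along the avoiding arc, and leaves the remaining common mass in place. This gives
\[
W\le w-\tfrac32\varepsilon w+\tfrac12\varepsilon D .
\]

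\emph{Lower bound.} Find a $1$-Lipschitz $f$ with $f(x)-f(y)=w$ and $f(j{-}1)-f(j{+}2)=D$; then the dual side of \eqref{eq:wasserstein} gives the same value. For $g=4,5$ take $f=d(\cdot,j{+}2)$. It is automatically $1$-Lipschitz, and $f(j{-}1)-f(j{+}2)=D$ by definition. The identity $f(x)-f(y)=w$ amounts to $d(j,j{+}2)=w_j+w_{j+1}$, i.e.\ the path $j\to j{+}1\to j{+}2$ being a geodesic. This is where the geodesic hypothesis, valid near the uniform weighting, is used; I will state this requirement explicitly as part of the hypothesis if it does not already follow from the geodesicity of $c_j$ and the avoiding arc. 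For $g=3$ the second pair is the diagonal, so only $f(x)-f(y)=w$ is needed, and $f=d(\cdot,y)$ works.

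Equality then gives $\kappa_\alpha=1-W/w=\varepsilon\,\tfrac12\,(3-D/w)$ exactly for $\alpha$ near $1$. Dividing by $1-\alpha$ as in \eqref{eq:lly} yields
\[
\kappa_{c_j}=\tfrac12\Big(3-\frac{D}{w_j}\Big).
\]

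The explicit cases follow by substituting the avoiding-arc length $D=0$, $w_{j+2}$, and $w_{j+2}+w_{j+3}$ for $g=3,4,5$ respectively. Multiplying by $w_j$ gives $\kappa_{c_j}w_j=\tfrac12(3w_j-D)$, which is linear in $w$ with coefficient $\tfrac32$ on the diagonal and $-\tfrac12$ on each edge of the avoiding arc. Reading off the matrix of $w\mapsto-(\kappa_e w_e)_e$, the sign convention of Theorem~\ref{thm:einstein}, gives $-\tfrac32 I$, $-\tfrac32I+\tfrac12P_{\mathrm{opp}}$ and $-\tfrac32I+\tfrac12P_2$. For $C_5$ the avoiding arc of $c_j$ consists of $c_{j+2},c_{j+3}=c_{j-2}$, which are the edges at index distance $\pm2$. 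Symmetry of the matrices is then visible from these entries.

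For the uniform weighting, $D=(g-3)w$, so $\kappa=(6-g)/2$, which gives $\tfrac32,1,\tfrac12$. The hypotheses hold there because each arc of at most $\lfloor g/2\rfloor$ edges is a geodesic. By continuity the hypotheses persist on a neighbourhood, since all competing route lengths differ by at least one edge weight.

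The main obstacle is the lower bound: the potential $f$ must attain both target differences simultaneously while staying $1$-Lipschitz on all pairs among the (up to four) support vertices. This is where the precise geodesic hypotheses enter, and where the degenerate coincidence $j{-}1=j{+}2$ for $C_3$ must be handled separately.
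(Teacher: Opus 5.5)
Your skeleton matches the paper's proof: the same signed measure, the same plan (mass $\alpha-\varepsilon/2$ along $c_j$, mass $\varepsilon/2$ from $j{-}1$ to $j{+}2$), and the same algebra for $\kappa_{c_j}$, the matrices and the uniform values. The paper simply asserts that this plan is optimal; you try to certify it with a potential, and that certificate is where the argument breaks.

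The potential $f=d(\cdot,j{+}2)$ needs $d(j,j{+}2)=w_j+w_{j+1}$. The stated hypotheses do not give this. Your continuity remark is also wrong for $g=4$: the routes $j\to j{+}1\to j{+}2$ and $j\to j{-}1\to j{+}2$ both have two edges and tie at the uniform weighting, so your extra condition fails in every neighbourhood of it. Concretely, on $C_4$ take $w_j=1+\tau$, $w_{j+2}=1-\tau$, $w_{j-1}=w_{j+1}=1$ with small $\tau>0$. Both hypotheses hold, yet $d(j,j{+}2)=2-\tau$, so $f(x)-f(y)=1-\tau\neq w_j$. The mirror choice $-d(\cdot,j{-}1)$ fails in the same way. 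If you add your condition as a hypothesis, the $C_4$ formula is proved only on a half-space for each edge. With your potential imposed on all four edges, only the weightings $(a,b,a,b)$ survive, so the linear regime $-\tfrac32I+\tfrac12P_{\mathrm{opp}}$ on a neighbourhood of the uniform metric is not established.

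The stated hypotheses do suffice. The missing ingredient is the four-point inequality $d(x,y)+d(x',y')\le d(x,y')+d(x',y)$, with $x'=j{-}1$ and $y'=j{+}2$.

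Put $a=w_{j-1}$, $b=w_j$, $c=w_{j+1}$, and let $D$ be the avoiding-arc length. The hypotheses then read $b\le a+c+D$ and $D\le a+b+c$. For $g=4,5$ one has $d(x,y')=\min(b+c,\,a+D)$ and $d(x',y)=\min(a+b,\,c+D)$. The four choices of minima give $d(x,y')+d(x',y)-b-D\in\{a+b+c-D,\ 2c,\ 2a,\ a+c+D-b\}$, all nonnegative.

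Now, $W$ depends only on $\mu_x^\alpha-\mu_y^\alpha$, whose positive and negative parts are $A\delta_x+B\delta_{x'}$ and $A\delta_y+B\delta_{y'}$, with $A=\alpha-\varepsilon/2>B=\varepsilon/2$. Every coupling of these sends some mass $t\in[0,B]$ crosswise, at cost $Ab+BD+t\bigl(d(x,y')+d(x',y)-b-D\bigr)$. Hence $t=0$ is optimal and $W=Ab+BD$ exactly.

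Equivalently, a potential $f(x)=b$, $f(y)=0$, $f(x')=s$, $f(y')=s-D$ exists on the support. The four interval constraints on $s$ intersect pairwise: by the triangle inequality, and for the constraints from $\{x,y'\}$ and $\{x',y\}$ by exactly the four-point inequality. So they share a common point. With this replacement the rest of your proposal goes through unchanged.
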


\begin{figure}[t]
\centering
\begin{tikzpicture}[scale=1.0, cv/.style={circle,draw,fill=black,inner sep=1.3pt},
   wl/.style={font=\scriptsize}, vl/.style={font=\scriptsize}]
 \begin{scope}[shift={(0,0)}]
   \node[cv,label={[vl]below:$j$}] (a0) at (210:1) {};
   \node[cv,label={[vl]below:$j{+}1$}] (a1) at (330:1) {};
   \node[cv,label={[vl]above:$j{-}1{=}j{+}2$}] (a2) at (90:1) {};
   \draw[line width=2pt] (a0)--(a1) node[wl,midway,below]{$c_j,\,w_j$};
   \draw (a1)--(a2); \draw (a2)--(a0);
   \node[font=\scriptsize] at (0,-1.75) {$C_3:\ -\tfrac32 I$};
 \end{scope}
 \begin{scope}[shift={(4.0,0)}]
   \node[cv,label={[vl]below left:$j$}]   (b0) at (135:1.2) {};
   \node[cv,label={[vl]below right:$j{+}1$}] (b1) at (45:1.2) {};
   \node[cv,label={[vl]above right:$j{+}2$}] (b2) at (315:1.2) {};
   \node[cv,label={[vl]above left:$j{-}1$}]  (b3) at (225:1.2) {};
   \draw[line width=2pt] (b0)--(b1) node[wl,midway,above]{$c_j,w_j$};
   \draw (b1)--(b2);
   \draw[line width=2pt,red!70] (b2)--(b3) node[wl,midway,below,red!70!black]{$w_{j+2}$};
   \draw (b3)--(b0);
   \draw[red!70,dashed,line width=0.7pt] ($(b0)!0.5!(b1)$)--($(b2)!0.5!(b3)$);
   \node[font=\scriptsize] at (0,-1.75) {$C_4:\ +\tfrac12\,$opposite};
 \end{scope}
 \begin{scope}[shift={(8.3,0)}]
   \node[cv,label={[vl]left:$j$}]      (c0) at (162:1.2) {};
   \node[cv,label={[vl]right:$j{+}1$}] (c1) at (90:1.2) {};
   \node[cv,label={[vl]right:$j{+}2$}] (c2) at (18:1.2) {};
   \node[cv,label={[vl]below:$j{+}3$}] (c3) at (306:1.2) {};
   \node[cv,label={[vl]below:$j{-}1$}] (c4) at (234:1.2) {};
   \draw[line width=2pt] (c0)--(c1) node[wl,midway,above left]{$c_j,w_j$};
   \draw (c1)--(c2);
   \draw[line width=2pt,red!70] (c2)--(c3) node[wl,midway,right,red!70!black]{$w_{j+2}$};
   \draw[line width=2pt,red!70] (c3)--(c4) node[wl,midway,below,red!70!black]{$w_{j+3}$};
   \draw (c4)--(c0);
   \node[font=\scriptsize] at (0,-1.75) {$C_5:\ +\tfrac12\,$dist-$2$};
 \end{scope}
\end{tikzpicture}
\caption{The short-cycle Ricci matrices (Proposition~\ref{prop:short}), vertices indexed as in the
text. The curvature of the direct edge $c_j=\{j,j{+}1\}$ of weight $w_j$ (thick) is set by the
avoiding arc from $j{-}1$ to $j{+}2$, and couples --- through the $+\tfrac12$
off-diagonal --- to the edges drawn in red: none for $C_3$ (matrix $-\tfrac32 I$; there
$j{-}1=j{+}2$ and the arc has length $0$), the opposite edge $w_{j+2}$ for $C_4$, and the two
line-graph-distance-$2$ edges $w_{j+2},w_{j+3}$ for $C_5$.}
\label{fig:short}
\end{figure}

\begin{proof}
Both endpoints of $c_j$ have degree $2$, so $\mu_x^\alpha$ places $\alpha$ on $x=j$ and
$\tfrac{1-\alpha}2$ on each of $j{-}1,j{+}1$, and similarly for $y=j{+}1$. The signed
measure $\mu_x^\alpha-\mu_y^\alpha$ has surplus $A=\tfrac{3\alpha-1}2$ at $x$ and
$B=\tfrac{1-\alpha}2$ at the outer neighbour $x'=j{-}1$, and matching deficits $A$ at
$y=j{+}1$ and $B$ at $y'=j{+}2$; the mass on the shared neighbourhood cancels, and for
$C_3$ the points $x',y'$ coincide so $B$ cancels entirely. In the stated geodesic regime
the optimal plan moves $A$ along $c_j$ (distance $w_j$) and $B$ from $x'$ to $y'$
(distance $d(x',y')$), so $W(\mu_x^\alpha,\mu_y^\alpha)=A\,w_j+B\,d(x',y')$; with
$d(x,y)=w_j$,
\[
\kappa_\alpha=1-\frac{A\,w_j+B\,d(x',y')}{w_j}.
\]
Setting $\alpha=1-\varepsilon$ gives $A=1-\tfrac{3\varepsilon}2$, $B=\tfrac\varepsilon2$,
whence $\kappa_\alpha=\tfrac{\varepsilon}2\big(3-d(x',y')/w_j\big)+o(\varepsilon)$ and
$\kappa_{c_j}=\lim_{\varepsilon\to0}\kappa_\alpha/\varepsilon=\tfrac12(3-d(x',y')/w_j)$.
The three cases substitute $d(x',y')=0,\ w_{j+2},\ w_{j+2}+w_{j+3}$, and multiplying by
$w_j$ gives the linear expressions and hence the matrices.
\end{proof}

The $C_4$ matrix $-\tfrac32 I+\tfrac12 P_{\mathrm{opp}}$ is reducible:
$P_{\mathrm{opp}}$ splits the four edges into two opposite pairs, so the top eigenvalue $-1$ of
$R_{C_4}$ (giving $\kappa=1$) has a two-dimensional eigenspace, and its positive eigenvector is
not unique, the one-parameter family $(a,b,a,b)$, $a,b>0$. Thus Perron uniqueness fails for
$C_4$, which cannot happen on a
tree, where $\RG+2I$ is always irreducible. As the girth grows
the coupling migrates outward along the line graph (distances $0,2,2,\dots$) and, by
$g\ge6$, becomes the adjacent-edge (tree-formula) coupling of
Proposition~\ref{prop:bare}, re-entering the balanced regime. A description of the full
piecewise-linear structure away from the near-uniform regime is left to future work.

\section{Existence of Einstein metrics}\label{sec:exist}

In the balanced regime the Einstein metric is the Perron vector of $\RG$
(Theorem~\ref{thm:einstein}). Outside it the map $w\mapsto(\kappa_e w_e)_e$ is only
piecewise-linear, and finding an Einstein metric becomes a self-consistent Perron
problem: a positive $w$ and a transport regime $\sigma$ with $w$ in the cone of $\sigma$
and $R^{(\sigma)}w=-\kappa\,w$. Unlike for trees --- each of which carries a
unique Einstein metric \cite{BCH} --- existence on a unicyclic graph is not
automatic.

Being balanced guarantees an Einstein metric and identifies it as the Perron vector of
$\RG$ with $\kappa=-\lmax(\RG)$ (Theorem~\ref{thm:einstein}); it is not necessary.
A short cycle is Einstein but never balanced: for any weights, three consecutive
edges of $C_g$ ($g\le5$) exceed half the perimeter. In particular the bare cycles
$C_4$ and $C_5$ carry the uniform Einstein metric with $\kappa=1$ and $\kappa=\tfrac12$;
moreover every regular sun $S_{g,d}$ with $g\in\{3,4,5\}$ is Einstein by symmetry yet
still unbalanced, and the same conclusion holds for any decorated cycle with two edge orbits
(one identical ring of pendant leaves) by the argument of
Proposition~\ref{prop:symexist} (cf.\ Remark~\ref{rem:symexist}). These are
concrete families where an Einstein metric exists but the balance condition fails.

The balanced regime is therefore a criterion for when $\RG$ computes the Einstein metric,
not a criterion for existence itself. In the non-balanced case the metric may still exist,
but it generally lies in a different transport regime $\sigma$ with $R^{(\sigma)}w=-\kappa w$;
for $C_4$ the relevant regime is the opposite-edge matrix
$-\tfrac32I+\tfrac12P_{\mathrm{opp}}$ of Section~\ref{sec:short}, whose top eigenvalue is
$\kappa=1$.

\begin{example}\label{ex:nonexist}
The unicyclic graph $C_3^{+}$ obtained from a triangle by attaching a single pendant leaf
admits no discrete Einstein metric.
\end{example}
\begin{proof}
Label the triangle $\{0,1,2\}$ and let $3$ be the leaf, attached to $0$; write
$w_{01},w_{12},w_{02},w_{03}$ for the edge weights and $d$ for the graph metric. We bound
two curvatures unconditionally --- with no assumption on the weights --- and show
they can never coincide.

\emph{The cycle edge: $\kappa_{12}=\tfrac32$ always.} The endpoints $1,2$ have degree
$2$ and share the common neighbour $0$, so the mass at $0$ cancels in
$\rho=\mu_1^\alpha-\mu_2^\alpha$: $\rho(0)=\tfrac{1-\alpha}2-\tfrac{1-\alpha}2=0$. Thus
$\rho$ is a single surplus $\tfrac{3\alpha-1}2$ at $1$ and an equal deficit at $2$, and its
optimal transport simply moves that mass from $1$ to $2$, giving
$W(\mu_1^\alpha,\mu_2^\alpha)=\tfrac{3\alpha-1}2\,d(1,2)$ and
\[
\kappa_\alpha(1,2)=1-\frac{W}{d(1,2)}=1-\frac{3\alpha-1}2 ,
\]
in which $d(1,2)$ cancels. Hence $\kappa_{12}=\lim_{\alpha\to1}\kappa_\alpha/(1-\alpha)=\tfrac32$
for every weighting --- whether or not $\{1,2\}$ is a geodesic, and independently of the
leaf weight.

\emph{The leaf edge: $\kappa_{03}<\tfrac43$ always.} Set $f(0)=0$, $f(3)=-w_{03}$,
$f(1)=d(0,1)$, $f(2)=d(0,2)$. This $f$ is $1$-Lipschitz for $d$: indeed
$|f(1)-f(2)|=|d(0,1)-d(0,2)|\le d(1,2)$, and $|f(i)-f(3)|=d(0,i)+w_{03}=d(i,3)$ since $3$ is
a leaf at $0$. A direct evaluation of $\langle f,\mu_0^\alpha-\mu_3^\alpha\rangle$ yields
\[
\kappa_{03}\ \le\ \frac43-\frac{d(0,1)+d(0,2)}{3\,w_{03}}\ <\ \frac43 ,
\]
since $d(0,1),d(0,2)>0$ (the value $\tfrac43$ is approached only as $w_{03}\to\infty$; a
small leaf weight makes $\kappa_{03}$ more negative, not larger).

Therefore $\kappa_{12}-\kappa_{03}>\tfrac32-\tfrac43=\tfrac16>0$ for every positive
weighting, so the curvature is never constant, and $C_3^{+}$ has no Einstein metric.
\end{proof}

Example~\ref{ex:nonexist} contrasts sharply with trees, each of which carries a
unique Einstein metric \cite{BCH}. The obstruction is the short-cycle rigidity of
Section~\ref{sec:short}: an edge whose $2$-ball is a triangle has its curvature pinned (here
at $\tfrac32$) by Lemma~\ref{lem:local}, and an adjacent branch cannot match it. With five
edges, $C_3^{+}$ is the smallest unicyclic graph with no Einstein metric.

Symmetry, by contrast, forces existence, independently of the balanced condition. The
simplest instance is an example rather than a theorem.

\begin{example}\label{ex:barecycles}
If $\mathrm{Aut}(G)$ acts transitively on $E(G)$, then the curvature of the uniform weight
$w\equiv1$ takes one value on the single edge orbit, so $w\equiv1$ is Einstein. Among
unicyclic graphs the edge-transitive ones are exactly the bare cycles. Every bare cycle is therefore Einstein, with
curvature $\tfrac32,\,1,\,\tfrac12$ for $g=3,4,5$ (Section~\ref{sec:short}) and $0$ for
$g\ge6$ (Proposition~\ref{prop:bare}).
\end{example}

For decorated cycles uniformity fails across the two edge orbits, and existence becomes a
one-variable intermediate-value problem (Figure~\ref{fig:ivt}).

\begin{figure}[t]
\centering
\begin{tikzpicture}[scale=1.05]
 \draw[->] (0,0)--(4.7,0) node[right,font=\small]{$a$};
 \draw[->] (0,-1.95)--(0,1.6) node[above,font=\small]{$\kappa$};
 \draw[blue!70,thick] plot[domain=0.12:4.35,samples=60] (\x,{(4-2*\x)/3});
 \node[blue!70!black,font=\scriptsize] at (3.55,-1.35) {$\kappa_{\mathrm{leaf}}(a)$};
 \draw[red!70,thick] plot[domain=0.36:4.35,samples=90] (\x,{-2/(3*\x)});
 \node[red!70!black,font=\scriptsize] at (3.75,0.05) {$\kappa_{\mathrm{cyc}}(a)$};
 \fill (2.414,-0.276) circle (1.7pt);
 \draw[dashed] (2.414,-0.276)--(2.414,0);
 \node[font=\scriptsize] at (2.414,0.22) {$a^\ast$};
 \node[font=\scriptsize] at (0.95,-1.6) {$\Delta<0$};
 \node[font=\scriptsize] at (4.05,0.62) {$\Delta>0$};
\end{tikzpicture}
\caption{Existence on a sun by the intermediate value theorem
(Proposition~\ref{prop:symexist}; drawn for $d=3$, $g\ge6$). On the symmetric slice ---
cycle weight $a$, leaf weight $1$ --- the leaf curvature $\kappa_{\mathrm{leaf}}(a)=(4-2a)/d$
falls linearly while the cycle curvature $\kappa_{\mathrm{cyc}}(a)$ climbs from $-\infty$, so
$\Delta=\kappa_{\mathrm{cyc}}-\kappa_{\mathrm{leaf}}$ changes sign exactly once, at
$a^\ast$ ($=1+\sqrt{d-1}$ when $g\ge6$): the crossing is the Einstein metric. For $g\le5$
the same picture holds with the wrap-corrected $\kappa_{\mathrm{cyc}}$ (the extra
$(6-g)^{+}/d$ term).}
\label{fig:ivt}
\end{figure}

\begin{proposition}\label{prop:sununiq}\label{prop:symexist}
Normalise the leaf weight of the regular sun $S_{g,d}$ $(g\ge3,\ d\ge3)$ to $1$ and write
$a>0$ for the common cycle-edge weight. On this symmetric slice the two curvatures are, exactly,
\[
\kappa_{\mathrm{leaf}}(a)=\frac{4-2a}{d},\qquad
\kappa_{\mathrm{cyc}}(a)=-\frac{2(d-2)}{a\,d}+\frac{(6-g)^{+}}{d},
\qquad (6-g)^{+}:=\max(6-g,0).
\]
Consequently $\Delta(a):=\kappa_{\mathrm{cyc}}(a)-\kappa_{\mathrm{leaf}}(a)$ is strictly
increasing, with $\Delta(0^{+})=-\infty$ and $\Delta(+\infty)=+\infty$, so it has a unique
zero $a^{\ast}$. Hence every regular sun $S_{g,d}$ carries a symmetric discrete Einstein
metric, unique up to scaling: for $g\ge6$ the balanced Perron metric of
Theorem~\ref{thm:sun}, and for $g\le5$ a non-balanced one. Uniqueness beyond the symmetric
class is Proposition~\ref{prop:sunrigid}.
\end{proposition}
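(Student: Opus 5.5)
The plan is to compute both curvatures on the symmetric slice (leaf weight $1$, cycle weight $a$) and then run a one-variable monotonicity argument.

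\emph{Leaf edge.} For a leaf edge $\{v,\ell\}$ with $d_v=d$ and $d_\ell=1$, we have $S_v=2a+(d-2)$ and $S_\ell=1$. Proposition~\ref{prop:balanced} says every pendant edge obeys the tree formula \eqref{eq:tree-kappa} whenever the weighting is geodesic. On the slice each cycle edge has weight $a\le L/2=ga/2$, so the weighting is geodesic for $g\ge3$. The tree formula then gives
\[
\kappa_{\mathrm{leaf}}=-\Big(\tfrac{2a+d-2-2}{d}+\tfrac{1-2}{1}\Big)=\tfrac{4-2a}{d}.
\]
I would double-check that the leaf's neighbourhood distances $d(a',v)$ are the cycle-edge weights even for $g=3$, which holds since a single edge of length $a$ is never beaten by the route of length $2a$.

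\emph{Cycle edge.} For $g\ge6$, Corollary~\ref{cor:sym} puts the slice in the balanced regime. The tree formula with $S_x=S_y=2a+d-2$ gives
\[
\kappa_{\mathrm{cyc}}=-\tfrac{2(d-2)}{ad}.
\]
For $g\le5$ I would redo the transport computation of Proposition~\ref{prop:short}, now with degree-$d$ endpoints. The signed measure $\mu_x^\alpha-\mu_y^\alpha$ is split into three parts:
\begin{itemize}
\item the bulk $\approx1$ moved along $c_j$;
\item the leaf masses $\tfrac{\varepsilon}{d}$ each, moved leaf-to-leaf at cost $a+2$;
\item the outer cycle neighbour mass $\tfrac{\varepsilon}{d}$ moved from $x'$ to $y'$ at cost $d(x',y')=\min(3,g-3)a$ (zero for $g=3$, where $x'=y'$).
\end{itemize}
Compared with the tree plan, which pays $3a$ on the $x'\to y'$ pair, this saves $\tfrac{\varepsilon}{d}(3a-(g-3)a)=\tfrac{\varepsilon}{d}(6-g)a$. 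Dividing by $\varepsilon a$ gives exactly the additive $(6-g)^+/d$.

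The main obstacle is certifying optimality of this plan for $g\le5$: the leaf-to-leaf transports might instead be rerouted, for instance leaf $\to x\to y\to$ leaf versus alternatives. I would exhibit a Kantorovich potential $f$, built from signed distance to the edge, with $f$ at the leaves offset by $\pm1$ and $f(x'),f(y')$ chosen to match $d(x',y')$. I would then verify that it is $1$-Lipschitz and that its value equals the plan's cost, as in the appendix certificates. If uniform validity in $a$ fails, I would restrict to the range where $d(x',y')=(g-3)a$ holds, which is all $a$ on the symmetric slice.

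\emph{Monotonicity.} With the two formulas in hand,
\[
\Delta(a)=-\tfrac{2(d-2)}{ad}+\tfrac{(6-g)^+}{d}-\tfrac{4-2a}{d},
\]
which is a sum of strictly increasing terms since $d\ge3$ and $-1/a$ is increasing. It tends to $-\infty$ as $a\to0^+$ and to $+\infty$ as $a\to\infty$. The intermediate value theorem plus strict monotonicity gives a unique zero $a^\ast$. By edge-orbit symmetry, the weighting at $a^\ast$ has constant curvature on both orbits, hence is Einstein. For $g\ge6$, solving $\Delta=0$ gives $a^2-2a-(d-2)=0$, so $a^\ast=1+\sqrt{d-1}$. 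One should check that the ratio $w_{\mathrm{cyc}}/w_{\mathrm{leaf}}$ of the Perron vector in Theorem~\ref{thm:sun} is the same $1+\sqrt{d-1}$, identifying $a^\ast$ with that balanced Perron metric. For $g\le5$ the metric is non-balanced by Section~\ref{sec:short}.
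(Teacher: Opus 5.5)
Your proposal is correct and follows the paper's proof essentially step for step. The leaf curvature comes from the tree formula via Proposition~\ref{prop:balanced}, and the cycle curvature is the tree value raised by the saving $\tfrac{\varepsilon}{d}(6-g)a$ on the outer pair $(x',y')$, certified by a Kantorovich potential; strict monotonicity of $\Delta$ plus the intermediate value theorem then finish. For the certificate you defer, one uniform choice works for all $a>0$ and all $g\in\{3,4,5\}$: take $f(x)=f(x')=0$, $f(y)=-a$, $f(y')=-d(x',y')$, with $f=1$ on the leaves of $x$ and $f=-a-1$ on the leaves of $y$. This $f$ is $1$-Lipschitz on the support and its pairing with $\mu_x^\alpha-\mu_y^\alpha$ equals your plan's cost.
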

\begin{proof}
By the symmetry of $S_{g,d}$ we may restrict to weightings with every cycle edge equal to
$a>0$ and every leaf edge equal to $1$; within each of these two orbits the curvature is
constant, so $S_{g,d}$ is Einstein exactly when $\Delta(a)=0$. It remains to compute the two
curvatures and locate that zero.

\emph{Closed forms.} The leaf edge is pendant, so by Proposition~\ref{prop:balanced} it obeys
the tree formula; with the weighted degree $S_x=\sum_{u\sim x}w_{xu}=2a+(d-2)$ (two cycle
edges of weight $a$ and $d-2$ unit leaves) this gives $\kappa_{\mathrm{leaf}}(a)=(4-2a)/d$.
For the cycle edge $c=\{x,y\}$ both endpoints have degree $d$; with $\varepsilon=1-\alpha$ the
signed measure $\rho=\mu_x^\alpha-\mu_y^\alpha$ carries a core surplus $\alpha-\varepsilon/d$
at $x$, a surplus $\varepsilon/d$ at the outer cycle neighbour $x'$ and at each of the $d-2$
leaves of $x$, and the mirror deficits on the $y$-side; the direct edge is the geodesic, so
$d(x,y)=a$. Take the potential
\[
f(x)=0,\ f(y)=-a,\quad f(\ell)=1,\ f(\ell')=-a-1\ (\ell,\ell'\text{ leaves of }x,y),\quad
f(x')=a,\ f(y')=-2a .
\]
When the through-arc $x'xyy'$ of length $3a$ is a geodesic, $f$ is $1$-Lipschitz and optimal,
and $\langle f,\rho\rangle$ evaluates, after cancellation of the common mass, to
$W=a+2(d-2)\varepsilon/d$; then $\kappa_\alpha=1-W/a$ gives
$\kappa_{\mathrm{cyc}}=\lim_{\alpha\to1}\kappa_\alpha/\varepsilon=-2(d-2)/(a d)$, the tree
value --- valid for $g\ge6$, where $3a\le L/2$ for every $a$. For $g\le5$ the wrap-arc is
strictly shorter, $d(x',y')=(g-3)a<3a$, so the sole binding Lipschitz constraint is on the
pair $(x',y')$; the optimal potential there has gap $d(x',y')=(g-3)a$ rather than $3a$, and
since $\rho$ places $\pm\varepsilon/d$ on this pair (for $g=3$, $x'=y'$ and it cancels, the
same computation with $6-g=3$), $W$ drops by $\varepsilon(6-g)a/d$ and $\kappa_{\mathrm{cyc}}$
rises by $(6-g)/d$, giving the stated formula; the remaining Lipschitz inequalities are checked
directly.

\emph{Existence and uniqueness.} From the closed forms
$\Delta'(a)=\tfrac{2}{d}+\tfrac{2(d-2)}{a^{2}d}>0$, so $\Delta$ is strictly increasing.
Moreover $\kappa_{\mathrm{leaf}}(a)\to-\infty$ as $a\to\infty$ while $\kappa_{\mathrm{cyc}}(a)$
stays bounded (it tends to $(6-g)^{+}/d$), so $\Delta(+\infty)=+\infty$; and
$\kappa_{\mathrm{cyc}}(a)\to-\infty$ as $a\to0^{+}$ while $\kappa_{\mathrm{leaf}}(0^{+})=4/d$ is
finite, so $\Delta(0^{+})=-\infty$. Being continuous and strictly increasing on $(0,\infty)$,
$\Delta$ has exactly one zero $a^{\ast}$, which is thus the symmetric Einstein metric, unique up
to scaling. For $g\ge6$ the sun is balanced and $a^{\ast}$ is the Perron vector of $\RG$
(Lemma~\ref{lem:pf}, Theorem~\ref{thm:sun}); for $g\le5$ the metric is Einstein but not
balanced.
\end{proof}

\begin{proposition}\label{prop:sunrigid}
Let $w$ be any discrete Einstein metric of curvature $\kappa$ on a sun $S_{g,d}$.
\begin{enumerate}
\item[\rm(i)] The leaves at each cycle vertex $v$ carry equal weight $b_v$, and the two cycle
edges $a,a'$ at $v$ satisfy $a+a'=(4-\kappa d)\,b_v$.
\item[\rm(ii)] $w$ lies in the interior of a transport regime, where $\kappa_e w_e=-(R^{(\sigma)}w)_e$
for a fixed symmetric matrix $R^{(\sigma)}$, and $R^{(\sigma)}+cI$ is nonnegative and
irreducible; hence $-\kappa=\lmax(R^{(\sigma)})$ is simple and $w$ is its unique positive
eigenvector. Thus $w$ is the unique Einstein metric of its transport regime, up to scaling.
\end{enumerate}
The interior hypothesis holds for $g\le5$, where the wrap arcs are strict geodesics. For
$g\ge6$ the regime is the (closed) balanced cone, on which $R^{(\sigma)}=\RG$ is the curvature
operator --- the symmetric metric sits on the boundary $3a=\tfrac12 L$ exactly when $g=6$ ---
and the conclusion is Theorem~\ref{thm:einstein}.
\end{proposition}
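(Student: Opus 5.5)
For part (i), I will work only with the leaf edges, since they obey the tree formula \eqref{eq:tree-kappa} unconditionally (Proposition~\ref{prop:balanced}, in the geodesic setting). Take a leaf edge $\{v,\ell\}$ with weight $b$. Here $d_\ell=1$ and $S_\ell=b$, so the $\ell$-term of \eqref{eq:tree-kappa} equals $-(b-2b)/b=1$. The formula becomes
\[
\kappa=1-\frac{S_v-2b}{b\,d}=1+\frac2d-\frac{S_v}{b\,d},\qquad\text{i.e.}\qquad S_v=(d+2-\kappa d)\,b .
\]
The quantity $S_v$ and the constant $\kappa$ are the same for every leaf at $v$. Hence all leaves at $v$ share a common weight $b_v=S_v/(d+2-\kappa d)$; the denominator is positive because $S_v>0$.

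Next I substitute $S_v=a+a'+(d-2)b_v$. This gives $a+a'=(4-\kappa d)b_v$, which is the identity claimed in (i). It also yields $\kappa<4/d$ for free.

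For part (ii), I will use (i) to remove the leaf coordinates. In every regime the cycle-edge curvature times its weight is a linear form. That form collects the tree-formula contributions, in which the leaf sums enter only through $(d-2)b_v$, plus the wrap correction $-\tfrac1{2d}\,d(x',y')$-type terms computed as in Propositions~\ref{prop:short} and \ref{prop:symexist}. On a fixed regime $\sigma$, meaning a fixed choice of which of the two arcs is geodesic for each outer pair $(x',y')$, the wrap distances are fixed linear combinations of the cycle weights. So $w\mapsto(\kappa_ew_e)_e$ equals $-R^{(\sigma)}w$ for an explicit matrix.

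I then check three properties of $R^{(\sigma)}$:
\begin{enumerate}
\item[(a)] Symmetry. The leaf--cycle couplings are both $1/d$. The wrap coupling between $c_j$ and an edge on its avoiding arc is the same coefficient $\tfrac1{2d}$-type entry in both directions, because the avoiding-arc relation is symmetric when $g\le5$; this is the pattern already seen in $P_{\mathrm{opp}}$ and $P_2$.
\item[(b)] Nonnegativity of off-diagonal entries. Every wrap term enters with a plus sign, since shortening a distance raises curvature (Lemma~\ref{lem:mono}).
\item[(c)] Irreducibility. Leaves couple to their cycle edges, and consecutive cycle edges couple through the shared vertex.
\end{enumerate}
Perron--Frobenius then applies as in Lemma~\ref{lem:pf}. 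Since $w>0$ satisfies $R^{(\sigma)}w=-\kappa w$, the value $-\kappa$ must equal $\lmax(R^{(\sigma)})$ and $w$ is its Perron vector, unique up to scaling.

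For $g\ge6$ I will use the balanced closed cone, where $R^{(\sigma)}=\RG$, and apply Theorem~\ref{thm:einstein}. I will note separately that for $g=6$ the symmetric point satisfies $3a=L/2$ and so lies on the boundary of the cone. For $g\le5$, I will show the wrap inequalities are strict at any Einstein metric using (i), so $w$ lies in the interior of its regime.

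The main obstacle is the linearity claim in (ii) away from the symmetric slice. I need to show that the optimal Kantorovich potential keeps a fixed combinatorial structure across the whole open regime, so that $W$ really is linear there. This requires extending the explicit potential used in Proposition~\ref{prop:symexist} to unequal cycle weights and verifying all of its Lipschitz constraints. I would first try to build the potential from geodesic distances to $x$ and $y$, truncated at the outer pair, and certify its optimality with a matching transport plan.
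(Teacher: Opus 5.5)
Part (i) is exactly the paper's argument. The skeleton of (ii) — one linear piece per regime, nonnegative off-diagonal part, irreducibility, Perron--Frobenius — is also the paper's. However, your checks (a)--(c) rest on a wrong picture of a wrap row. In every regime
\[
\kappa_{c_j}a_j=\tfrac1d\bigl[3a_j-d(x',y')-(d-2)(b_j+b_{j+1})\bigr]
\]
(Lemma~\ref{lem:fmax}), and the consecutive couplings live entirely inside $d(x',y')=T_j=a_{j-1}+a_j+a_{j+1}$. In a wrap row $d(x',y')=L-T_j$ contains neither $a_{j-1}$ nor $a_{j+1}$, so those entries of $R^{(\sigma)}$ are exactly $0$. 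This is why the bare $C_3,C_4$ give $-\tfrac32I$ and $-\tfrac32I+\tfrac12P_{\mathrm{opp}}$.

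\begin{itemize}
\item[(c)] The clause that consecutive cycle edges couple through their shared vertex is false in every wrap row, i.e.\ in exactly the short-cycle case where (ii) has new content. Irreducibility must come from the leaves alone: each of the $d-2\ge1$ leaf edges at $v$ has entry $1/d$ with \emph{both} cycle edges at $v$, and these stars chain around the cycle. That is the paper's argument (cf.\ Remark~\ref{rem:sunrigid}). If you really eliminate the leaves via (i), as the first sentence of your (ii) says, consecutive couplings do reappear. But their coefficient $\tfrac{d-2}{d(4-\kappa d)}$ depends on $\kappa$, so there is no fixed matrix; keep the leaves as coordinates.
\item[(b)] This does not follow from Lemma~\ref{lem:mono}, which controls the value of the switch $T_j\to L-T_j$, not the signs of coefficients. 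The switch deletes the $+1/d$ at $c_{j\pm1}$ and creates $+1/d$ at the far edges. Nonnegativity holds only because the deleted entries land exactly on $0$, and you must read this off the explicit row. Also, the outer-pair coefficient is $1/d$, not $1/(2d)$.
\item[(a)] Symmetry is false in mixed regimes, which occur for $g=5$. Take cycle weights $(1,1,1,3,3)$: the middle unit edge has $T_j=3<L/2$, and all other edges wrap. A wrap row then has entry $0$ at a balanced neighbour whose own row has $1/d$ back. Drop (a); Perron--Frobenius for a nonnegative irreducible matrix needs no symmetry.
\end{itemize}

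Your plan to prove the wrap inequalities strict for $g\le5$ ``using (i)'' cannot work either. Part (i) expresses the leaf weights through the cycle weights and constrains the latter not at all. So it cannot exclude mixed or boundary configurations such as the one above; ruling those out is the separate Collatz--Wielandt argument of Proposition~\ref{prop:sunglobal}. Nor is strictness needed. Where $T_j=\tfrac12L$ one has $F^B_j(w)=F^W_j(w)$, so $w$ is a positive eigenvector of the matrix of every closed regime containing it. Perron--Frobenius then gives uniqueness within that closed regime, and this is also how the boundary case $g=6$ works.

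Finally, the linearity you flag as the main obstacle is exactly Lemma~\ref{lem:fmax}. Its plan is $x\to y$; $x'\to y'$ along the shorter arc; each leaf of $x$ to a leaf of $y$. Its potential is $f(x)=a_j$, $f(y)=0$, $f(\ell)=a_j+b_j$, $f(\ell')=-b_{j+1}$, $f(x')-f(y')=d(x',y')$. Plan and potential have matching value throughout each regime of geodesic weightings. Hence each $\kappa_{c_j}a_j$ is linear on the whole closed regime, and no stability argument for optimal plans is required.
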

\begin{proof}
(i) The edge $\{v,\ell\}$ to a leaf is pendant, so obeys the tree formula
$\kappa_{v\ell}=1-(S_v-2w_\ell)/(w_\ell d)$ (Proposition~\ref{prop:balanced}, which applies
because the two cycle edges at $v$ are geodesics, i.e.\ each has weight $\le L/2$; a cycle edge
exceeding $L/2$ is the degenerate weighting excluded throughout). For two leaves
$\ell,\ell'$ of $v$ the equality $\kappa_{v\ell}=\kappa_{v\ell'}$ reads
$S_v\bigl(1/w_\ell-1/w_{\ell'}\bigr)=0$, so $w_\ell=w_{\ell'}=:b_v$; and $\kappa_{v\ell}=\kappa$
gives $S_v=b_v\bigl(2+(1-\kappa)d\bigr)$, i.e.\ $a+a'=(4-\kappa d)\,b_v$ since
$S_v=a+a'+(d-2)b_v$.

(ii) At $w$ the direct cycle edges and, for $g\le5$, the wrap arcs are strict geodesics, so the
optimal transport plans are constant on a neighbourhood and each $\kappa_e w_e$ is a fixed
linear function of $w$; this defines the symmetric $R^{(\sigma)}$ by $\kappa_e w_e=-(R^{(\sigma)}w)_e$,
and the Einstein condition is $R^{(\sigma)}w=-\kappa w$. A leaf-edge row is the unmodified tree
formula, so each leaf edge at $v$ is coupled to the two cycle edges at $v$ with entry $1/d>0$;
the cycle-to-cycle couplings are $0$ or $1/d\ge0$ (the short-cycle correction touches only cycle
weights). Hence $R^{(\sigma)}+cI\ge0$, and its off-diagonal support contains, at every cycle
vertex $v$ --- which carries a leaf since $d\ge3$ --- a leaf edge joined to both cycle edges at
$v$; these stars overlap along the cycle, so the support is connected and $R^{(\sigma)}+cI$ is
irreducible. By Perron--Frobenius $\lmax(R^{(\sigma)})$ is simple with a unique positive
eigenvector, which must be $w$.
\end{proof}

\begin{proposition}\label{prop:sunbalsym}
Any balanced discrete Einstein metric on a sun $S_{g,d}$ is symmetric --- all cycle
edges carry one weight and all leaves another --- and hence equals the metric of
Proposition~\ref{prop:sununiq}, unique up to scaling.
\end{proposition}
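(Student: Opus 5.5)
Since $w$ is balanced, Proposition~\ref{prop:balanced} makes the tree formula valid on every edge. The identity $\kappa_e w_e=-(\RG w)_e$ then holds for all $e$, so the Einstein condition $\kappa_e\equiv\kappa$ becomes $\RG w=-\kappa w$ with $w>0$.

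By Lemma~\ref{lem:pf}, $\RG+2I$ is nonnegative and irreducible. Its Perron eigenvalue is therefore the only eigenvalue with a positive eigenvector, which gives $-\kappa=\lmax(\RG)$ and $w=w^\ast$ up to scaling.

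Next use symmetry. The dihedral group acting on the cycle, together with permutations of the leaves at each vertex, acts by graph automorphisms. $\RG$ depends only on degrees and incidences, so each automorphism's permutation matrix commutes with $\RG$ and maps $w^\ast$ to a positive eigenvector at $\lmax$. Simplicity forces $w^\ast$ to be invariant. The automorphism group is transitive on cycle edges (rotations) and on leaf edges (rotations combined with leaf permutations), so $w^\ast$ takes one value $a$ on cycle edges and one value $b$ on leaves. Normalising $b=1$ places $w$ on the symmetric slice of Proposition~\ref{prop:sununiq}. There $\Delta(a)=0$ has the unique zero $a^\ast$, so $w$ is that metric.

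One could also avoid Perron--Frobenius. Proposition~\ref{prop:sunrigid}(i) already gives equal leaves $b_v$ at each vertex and $a+a'=(4-\kappa d)b_v$. The balanced cycle-edge equations would then be a linear recurrence, and one would have to show its only positive periodic solution is constant. The Perron route is cleaner, so I will use it.

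The main obstacle is a technical one: the claim that $\kappa_e w_e=-(\RG w)_e$ on every edge needs the tree formula on pendant edges. Proposition~\ref{prop:balanced} requires no cycle edge to exceed $L/2$, and I must check that balance implies this. For $g\ge4$, each cycle edge lies in a three-edge window of weight at most $L/2$, so it has weight at most $L/2$. For $g\le5$ the balanced condition is essentially impossible (Section~\ref{sec:short}). In particular, for $g=3$ the window is the whole perimeter, so the inequality $L\le L/2$ fails and the statement holds vacuously. For $g=4,5$, the three-edge sum is at most $L/2$ at every $j$. Summing over $j$ gives $3L\le gL/2$, forcing $g\ge6$ and a contradiction, so the statement is again vacuous for short girth. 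After this check, the argument above applies for $g\ge6$.
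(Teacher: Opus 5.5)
Your argument is correct, but it takes a different route from the paper's. The paper uses neither Perron--Frobenius nor automorphisms here. It takes the leaf relation $a_j+a_{j+1}=(4-\kappa d)\,b_{v_j}$ of Proposition~\ref{prop:sunrigid}(i) and substitutes it into the balanced cycle-edge formula. This gives $a_j=\rho\,(a_{j-1}+a_{j+1})$ with $\rho$ independent of $j$. Summing over the cycle forces $\rho=\tfrac12$, and then vanishing second differences plus periodicity make $(a_j)$ constant. That is exactly the alternative you set aside: the summation trick settles the ``only positive periodic solution is constant'' step in one line.

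Your route instead identifies any balanced Einstein metric with the Perron vector of $\RG$, which is the uniqueness half of Theorem~\ref{thm:einstein} and holds on every unicyclic graph. You then get symmetry from simplicity plus transitivity of $\mathrm{Aut}(S_{g,d})$ on each edge orbit. This yields $\kappa=-\lmax(\RG)$ for free and extends verbatim to every regular decorated cycle (Corollary~\ref{cor:sym}).

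The paper's elimination, by contrast, is elementary and works directly with the edge equations of the sun. It is also the same reduction the paper reuses outside the balanced regime, in the resonance analysis of Remark~\ref{rem:sunrigid} and in Proposition~\ref{prop:g3global}, where $\RG$ no longer computes the curvature.

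Your preliminary check is correct and worth keeping. From $\sum_j T_j=3L$, balance forces $g\ge6$, and it forces every cycle edge to be at most $\tfrac12L$, so the pendant-edge tree formula applies. This makes explicit a point the paper's proof leaves tacit.
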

\begin{proof}
Let $w$ be a balanced Einstein metric of curvature $\kappa$, with cycle weights
$a_1,\dots,a_g$ ($a_j=w(e_j)$) and, by Proposition~\ref{prop:sunrigid}(i), leaf weight $b_{v_j}$
at $v_j$ with $a_j+a_{j+1}=C\,b_{v_j}$, $C=4-\kappa d$. Balancedness means every cycle edge is
governed by the through-arc geodesic, and the potential computation of
Proposition~\ref{prop:sununiq}, carried out with unequal weights, gives
\[
\kappa_{e_j}=1-\frac{a_{j-1}+a_{j+1}+(d-2)a_j+(d-2)\bigl(b_{v_{j-1}}+b_{v_j}\bigr)}{a_j\,d}.
\]
Setting $\kappa_{e_j}=\kappa$ and substituting $b_{v_{j-1}}=(a_{j-1}+a_j)/C$ and
$b_{v_j}=(a_j+a_{j+1})/C$ reduces this to
\[
a_j\Bigl[(1-\kappa)d-(d-2)-\tfrac{2(d-2)}{C}\Bigr]=\bigl(a_{j-1}+a_{j+1}\bigr)\Bigl[1+\tfrac{d-2}{C}\Bigr],
\]
that is, $a_j=\rho\,(a_{j-1}+a_{j+1})$ with $\rho$ independent of $j$. Summing over $j$ and
using $\sum_j(a_{j-1}+a_{j+1})=2\sum_j a_j$ (and $\sum_j a_j>0$) forces $\tfrac1\rho=2$, so
$a_{j-1}+a_{j+1}=2a_j$ for every $j$; the second differences of $(a_j)$ vanish, and periodicity
around the cycle makes $a_j$ constant. Proposition~\ref{prop:sunrigid}(i) then makes the leaves
equal, so $w$ is symmetric and is the metric of Proposition~\ref{prop:sununiq}.
\end{proof}

Beyond the balanced case, global uniqueness --- ruling out Einstein metrics in every
transport regime --- can be settled completely for the triangle sun, the smallest short cycle.

\begin{proposition}\label{prop:g3global}
For every $d\ge3$ the triangle sun $S_{3,d}$ admits a unique discrete Einstein metric up to
scaling: the symmetric one.
\end{proposition}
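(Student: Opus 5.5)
The plan is to combine the leaf rigidity of Proposition~\ref{prop:sunrigid}(i) with an explicit cycle-edge curvature formula for the triangle. Unlike the case $g\ge4$, this formula has no balanced/unbalanced dichotomy. On $C_3$ the two outer cycle neighbours $x',y'$ of a cycle edge coincide with the third triangle vertex, so the $\pm\varepsilon/d$ mass on the pair $(x',y')$ cancels, exactly as in Example~\ref{ex:nonexist} and the $g=3$ case of Proposition~\ref{prop:sununiq}. Consequently the only transport choices are those between the leaves of $x$ and the leaves of $y$, and the core mass along $c=\{x,y\}$. The whole triangle sun should therefore carry essentially one transport regime. That regime is indexed only by which triangle edges are geodesics.

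\emph{Step 1 (geodesic edges).} Label the cycle weights $a_0,a_1,a_2$, where $a_j$ is the edge opposite $v_j$. Assume first that every triangle edge is a geodesic, i.e.\ satisfies the triangle inequality for the three weights. Let $b_{v_x},b_{v_y}$ be the leaf weights at $x,y$ (equal within each vertex by Proposition~\ref{prop:sunrigid}(i)). I would build the Kantorovich potential as in Proposition~\ref{prop:sununiq}:
\[
f(x)=0,\quad f(y)=-a,\quad f(\ell)=b_{v_x},\quad f(\ell')=-a-b_{v_y},
\]
with $f$ at the common vertex chosen in $[-a,0]$. I would then evaluate $\langle f,\mu_x^\alpha-\mu_y^\alpha\rangle$ and match it with the obvious plan, to get a closed form
\[
\kappa_{c}=\frac{3}{d}-\frac{(d-2)(a+b_{v_x}+b_{v_y})-(\text{core term})}{a\,d}.
\]
This should be linear in $(a,b)$ after multiplying by $a$, and must reduce to $-2(d-2)/(ad)+3/d$ on the symmetric slice; that check fixes the constants.

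\emph{Step 2 (reduction to a cyclic system).} Substitute $b_{v_j}=(a_{j-1}+a_{j+1})/C$ with $C=4-\kappa d$. Positivity of the leaves forces $C>0$. Each Einstein equation $\kappa_{c}=\kappa$ then becomes a relation of the form
\[
\theta\,a_j=\rho\,(a_{j+1}+a_{j+2}),
\]
with $\theta,\rho$ independent of $j$, since on the triangle the "outer" edges of $c_j$ are exactly the two other edges. Summing over $j$ gives $\theta=2\rho$ whenever the sum of the weights is positive. Subtracting two of the equations gives $(\theta+\rho)(a_j-a_k)=0$, hence all $a_j$ are equal unless $\theta=-\rho$. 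That case is incompatible with $\theta=2\rho$ and $\rho\ne0$; the case $\rho=0$ I would exclude from the sign of the coefficients, since $\rho$ contains $1+(d-2)/C>0$. Equal cycle weights then force equal leaves, giving the symmetric metric of Proposition~\ref{prop:sununiq}.

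\emph{Step 3 (non-geodesic edges).} The main obstacle is the weightings in which some triangle edge, say $a_0>a_1+a_2$, is not a geodesic; these lie outside every regime treated so far. Here $d(v_1,v_2)=a_1+a_2$ and Proposition~\ref{prop:sunrigid} is not directly available at that edge. I would first try a pinning argument in the spirit of Example~\ref{ex:nonexist}. For the long edge, the normaliser is $d(v_1,v_2)=a_1+a_2$ while the measures are unchanged. I would recompute $\kappa$ for this edge with the same potential, using that the leaves still sit at distance $b$ from their roots. I expect this to show either that $\kappa_{c_0}$ exceeds the leaf curvature bound $\kappa_{\ell}<1$ coming from the tree formula (Proposition~\ref{prop:balanced}), or that its Einstein equation forces $a_0\le a_1+a_2$, a contradiction. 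If that comparison fails, the fallback is to treat the non-geodesic edge as an extra regime. I would write its own linear equations, again using the leaf relations $a+a'=Cb_v$ at the two short-edge vertices, which remain valid because the short edges are geodesics. I would then show directly that the resulting $3\times3$ system has no positive solution with $a_0>a_1+a_2$.
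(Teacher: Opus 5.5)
Your Steps 1 and 2 are sound and coincide with the paper's treatment of the triangle-inequality cone. Because the outer neighbours coincide, $\kappa_c\,a=\tfrac1d\bigl[3a-(d-2)(b_x+b_y)\bigr]$; your ``core term'' is $(d-2)a$. With $K=3-\kappa d$ and $C=4-\kappa d$, the cycle equations become $KC\,a_j=(d-2)(2a_j+a_{j+1}+a_{j+2})$. Your sum-and-difference argument reduces these to $a_0=a_1=a_2$, equivalently to the paper's differencing, which discards the branch $KC=d-2$ by positivity. (Here your $\rho$ is $(d-2)/C$; the factor $1+(d-2)/C$ belongs to the $g\ge4$ formula of Proposition~\ref{prop:sunbalsym}, but the sign conclusion is unaffected.)

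The genuine gap is Step 3. It is a list of things to try rather than an argument, and its first option cannot work. There is no pinning on $S_{3,d}$: unlike the edge $\{1,2\}$ of $C_3^{+}$, every triangle vertex carries leaves. The long edge therefore has $\kappa_{c_0}=\tfrac3d-\tfrac{(d-2)(b_{v_1}+b_{v_2})}{d(a_1+a_2)}$, which takes every value in $(-\infty,\tfrac3d)$ as the leaf weights vary. A leaf at $v$ has $\kappa_\ell=\tfrac4d-\tfrac{\delta_v}{d\,b_v}$, so the bound $\kappa_\ell<1$ is false for $d=3$. Consequently a two-edge comparison of the $C_3^{+}$ type cannot exclude a solution.

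Nor can an Einstein equation ``force $a_0\le a_1+a_2$''. In this regime all distances are those of the cut tree, so $a_0$ enters no curvature at all. Your fallback is the right route, but as stated it misapplies Proposition~\ref{prop:sunrigid}(i). The relation $a+a'=Cb_v$ there is derived only when both cycle edges at $v$ are geodesics, which on the triangle holds only at $v_0$.

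What is missing is the short computation that closes this case. Your Step 1 certificate holds verbatim in the degenerate cone with $a$ replaced by the geodesic length $D_{ij}$, since it uses only $d(x,y)$ and the cancellation at the common vertex. So every cycle equation reads $K\,D_{ij}=(d-2)(b_i+b_j)$. The leaf equations read $C\,b_v=\delta_v$ with geodesic distances: $Cb_{v_0}=a_1+a_2$, $Cb_{v_1}=a_1+2a_2$, and $Cb_{v_2}=2a_1+a_2$.

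The long edge, with $D=a_1+a_2$, then gives $KC=3(d-2)$. The short edge $\{v_0,v_1\}$ gives $KC\,a_2=(d-2)(2a_1+3a_2)$, i.e.\ $2(d-2)a_1=0$, and symmetrically $a_2=0$. So there is no positive solution. Combined with the fact that at most one triangle edge can fail the triangle inequality, this is how the paper finishes. Without it, your proposal establishes uniqueness only among geodesic weightings.
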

\begin{proof}
Let $p,q,r$ be the three cycle weights and, by Proposition~\ref{prop:sunrigid}(i), let $b_i$ be
the (equal) leaf weight at vertex $i$; write $K:=3-\kappa d$, $C:=4-\kappa d=K+1$. At most one
cycle edge can fail to be a geodesic ($p>q+r$ and $q>r+p$ together give $0>2r$), so the positive
orthant is the triangle-inequality cone together with three degenerate cones permuted by the
$S_3$ symmetry. In each regime the cycle edge $\{i,j\}$ has curvature
$\kappa_{\{i,j\}}=\tfrac3d-\tfrac{(d-2)(b_i+b_j)}{d\,D_{ij}}$, where $D_{ij}$ is its geodesic
length, and the leaves obey the tree formula; the leaf equation gives $g_i=C\,b_i$ (with $g_i$
the sum of the two geodesic cycle-distances at $i$) and the cycle equation
$K\,D_{ij}=(d-2)(b_i+b_j)$.

In the triangle-inequality cone $D_{ij}$ is the edge weight; eliminating the $b_i$ and
differencing the three cycle equations gives $(p-q)\bigl(KC-(d-2)\bigr)=0$ and
$(q-r)\bigl(KC-(d-2)\bigr)=0$. The branch $KC=d-2$ forces $p+q+r=0$, impossible for positive
weights, so $p=q=r$: the symmetric metric, with $K^2+K=4(d-2)$ (a unique positive root). In a
degenerate cone, say $p>q+r$, the weight $p$ drops out of every equation and the cycle equations
force $q=r=0$: no positive solution. Hence the symmetric metric is the only one. (The excluded
branch $KC=d-2$ is exactly the resonance $\theta=2\cos\tfrac{2\pi}{3}=-1$ of
Remark~\ref{rem:sunrigid}, here ruled out by positivity.)
\end{proof}

\begin{remark}\label{rem:sunrigid}
The leaves are what force the irreducibility in (ii). For $g=3,4$ the cycle-edge block of
$R^{(\sigma)}$ is degenerate: scalar $-\tfrac32I$ for $g=3$, opposite-coupling
$-\tfrac32I+\tfrac12P_{\mathrm{opp}}$ (reducible) for $g=4$, exactly the bare matrices of
Section~\ref{sec:short} --- and it is each leaf edge, joined to the two cycle edges at its
foot, that reconnects the block. For $g=5$ the cycle block already couples the distance-two
edges into an irreducible pentagram (the bare $C_5$ is itself uniquely Einstein), and the
leaves only reinforce connectivity. This is the mechanism by
which decoration removes the short-cycle degeneracy. Proposition~\ref{prop:sunrigid} pins the
metric uniquely inside its regime, an open cone of asymmetric weightings, and
Propositions~\ref{prop:sunbalsym} and~\ref{prop:g3global} settle the balanced case and the whole
triangle $(g=3)$. In an unbalanced regime the same reduction leaves a mean-zero cycle
deviation $(d_j)$ with $d_{j-1}+d_{j+1}=\theta\,d_j$, nonzero only at a resonance
$\theta=2\cos(2\pi k/g)\in[-2,2]$. In the pure all-wrap regime the curvature is pinned to
its symmetric value, forcing the single value
$\theta^{\ast}=\dfrac{2(2a^{\ast}+g-2)}{2a^{\ast}-g-2}$, where $a^{\ast}$ is the symmetric cycle
weight (the positive root of $2a^2-(g-2)a-2(d-2)=0$); here $|\theta^{\ast}|>2$ strictly for
every $g\in\{3,4,5\}$ and $d\ge3$, so the resonance is never met and no non-symmetric all-wrap
Einstein metric exists. The remaining case is the mixed regimes (some cycle edges
balanced, others wrap, no longer circulant); for $g=6$, where $\overline{m}_3\equiv L/2$, the
balanced region is not open but the period-$3$ family $\{a_{j+3}=a_j\}$ (on which
Proposition~\ref{prop:sunbalsym} still forces the symmetric metric), so every non-symmetric
$g=6$ metric is mixed. The mixed regimes are excluded in Proposition~\ref{prop:sunglobal} below,
which closes global uniqueness among geodesic weightings.
\end{remark}

\begin{lemma}\label{lem:fmax}
On a sun $S_{g,d}$ $(g\ge4)$ a geodesic cycle edge $c_j$ (weight $a_j\le\tfrac12L$)
satisfies
\[
\kappa_{c_j}\,a_j=\max\bigl(F^B_j,\,F^W_j\bigr),\qquad
F^B_j=\tfrac1d\bigl[2a_j-s_j-(d-2)(b_j+b_{j+1})\bigr],\quad
F^W_j=F^B_j+\tfrac{2T_j-L}{d},
\]
$s_j=a_{j-1}+a_{j+1}$, $T_j=a_{j-1}+a_j+a_{j+1}$; the balanced value $F^B_j$ is taken when
$T_j\le\tfrac12L$ and the wrap value $F^W_j$ when $T_j>\tfrac12L$.
\end{lemma}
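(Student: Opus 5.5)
The plan is to compute $W(\mu_x^\alpha,\mu_y^\alpha)$ for $c_j=\{x,y\}$, $x=v_{j-1}$, $y=v_j$, exactly for $\alpha$ near $1$. I will exhibit a transport plan and a Kantorovich potential whose costs agree, as in Propositions~\ref{prop:short} and~\ref{prop:sununiq}. Write $\varepsilon=1-\alpha$.

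\emph{The signed measure.} The measure $\rho=\mu_x^\alpha-\mu_y^\alpha$ has a core surplus $\alpha-\varepsilon/d$ at $x$. It has surplus $\varepsilon/d$ at the outer neighbour $x'=v_{j-2}$ and at each of the $d-2$ leaves of $x$ (weight $b_j$). The mirror deficits sit at $y$, at $y'=v_{j+1}$, and at the leaves of $y$ (weight $b_{j+1}$).

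\emph{The plan.} Move the core along $c_j$ at cost $a_j$. Move the leaf masses of $x$ to the leaves of $y$ at cost $b_j+a_j+b_{j+1}$. Move $\varepsilon/d$ from $x'$ to $y'$ at cost $d(x',y')=\min(T_j,L-T_j)$. This is the tree plan of Proposition~\ref{prop:balanced}, except that the $(x',y')$ leg uses the true geodesic.

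Dividing $a_j-W$ by $\varepsilon$ and letting $\varepsilon\to0$ gives the following. With cost $T_j$ on the $(x',y')$ leg one recovers $F^B_j$, the tree formula in unequal weights as in Proposition~\ref{prop:sunbalsym}. Shortening that leg to $L-T_j$ lowers $W$ by $\varepsilon(2T_j-L)/d$, which gives $F^W_j$. Since $2T_j-L\le0$ exactly when $T_j\le L/2$, the correct value is in both cases $\max(F^B_j,F^W_j)$. So the lemma reduces to showing that this plan is optimal in $G$, not only an upper bound for $W$.

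\emph{Optimality by a potential.} I take the potential of Proposition~\ref{prop:sununiq} with unequal weights:
\[
f(x)=0,\quad f(y)=-a_j,\quad f(\ell)=b_j,\quad f(\ell')=-a_j-b_{j+1},\quad f(x')=a_{j-1},\quad f(y')=-a_j-a_{j+1}.
\]
In the wrap case I lower the gap across $(x',y')$ by $\delta=2T_j-L>0$. The first choice to try is the symmetric split $f(x')=a_{j-1}-\delta/2$ and $f(y')=-a_j-a_{j+1}+\delta/2$, which makes $f(x')-f(y')=L-T_j$.

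Next I check that $\langle f,\rho\rangle$ equals the plan cost; this is a cancellation identical to the symmetric computation. Then I verify the $1$-Lipschitz condition on $\Nb x\times\Nb y$ and on the pairs inside $\Nb x$ and inside $\Nb y$ that matter. Leaf pairs are harmless, because leaf distances go through their feet.

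\emph{Main obstacle.} The main obstacle is the Lipschitz check on the mixed pairs $(x',y)$, $(x',x)$, $(x,y')$ and $(y',y)$ in the wrap case. Here $d(x',y)=\min(a_{j-1}+a_j,\,L-a_{j-1}-a_j)$ can itself wrap, and the symmetric split may then violate the constraint. I would handle this with an asymmetric split $\delta=\delta_1+\delta_2$, shifting the correction toward the side whose two-edge arc is nearer to half the perimeter. The conditions $g\ge4$ and $a_j\le L/2$ are what should guarantee that a feasible split exists. If no split works in some configuration, the fallback is to argue directly through the linear program. For $\varepsilon$ small, every pair other than $(x',y')$ keeps its tree distance or a distance at least as large relative to the plan, so by Lemma~\ref{lem:mono} together with a reverse comparison (restricting to the plan support) the only change from the tree value is the $(x',y')$ term.
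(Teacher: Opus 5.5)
Your coupling, the evaluation of its cost giving $F^B_j$ and $F^W_j$, and the reduction to exhibiting a matching Kantorovich potential are also the paper's route. The balanced case is fine, since your tree potential is feasible once $T_j\le\tfrac{L}{2}$.

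The gap is the wrap case. There the potential is the whole content of the lemma, and you never produce one. Your first candidate does fail. Take $a_{j-1}=0.01L$ and $a_j=a_{j+1}=0.45L$, so all cycle edges are geodesic and $T_j=0.91L$. The symmetric split gives $f(x')=\tfrac{L}{2}-a_j-a_{j+1}=-0.4L$, which violates $|f(x')-f(x)|\le d(x,x')=0.01L$. It also gives $f(y')=-0.49L$, which violates $|f(y')-f(x)|\le d(x,y')=0.1L$.

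The asymmetric split is only conjectured, and the fallback cannot replace it. In $G$ the distances are at most, not at least, the tree distances; here $d(x,y')$ wraps. Both Lemma~\ref{lem:mono} and restricting plans to a prescribed support only bound $W$ from above. What is missing is the lower bound $W\ge V$, i.e.\ a dual certificate. Note also that the operative hypotheses are $a_j\le\tfrac{L}{2}$ and $T_j\ge\tfrac{L}{2}$, not $g\ge4$; the paper's proof asserts feasibility precisely from these two.

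Here is a certificate that always works. Let $p$ be the point of the edge $c_{j+1}$, regarded as a segment of the cycle, at distance $s$ from $y$. Put $f(z)=d(z,p)-d(x,p)$ for $z\in\{x',x,y,y'\}$, and $f(\ell)=f(x)+b_j$, $f(\ell')=f(y)-b_{j+1}$ on the leaves.

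\begin{itemize}
\item The core values are $1$-Lipschitz by the triangle inequality.
\item The leaf constraints follow because leaves are pendant, with $f(\ell)-f(\ell')=a_j+b_j+b_{j+1}=d(\ell,\ell')$.
\item To get $f(x)-f(y)=a_j$ and $f(x')-f(y')=L-T_j$, the distances $d(\cdot,p)$ at $x,y,y',x'$ must be realised, respectively, by the route through $y$, the direct route, the direct route, and the route around the wrap arc through $y'$.
\item Comparing each route with the other way round the cycle, this holds iff
\[
\max\bigl(0,\ \tfrac{L}{2}-a_{j-1}-a_j,\ a_{j+1}-\tfrac{L}{2}\bigr)\ \le\ s\ \le\ \min\bigl(a_{j+1},\ \tfrac{L}{2}-a_j\bigr).
\]
This interval is nonempty exactly when $a_j\le\tfrac{L}{2}$ and $T_j\ge\tfrac{L}{2}$.
\end{itemize}

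Then $\langle f,\rho\rangle$ equals your plan cost, so $W=V$. In your normalisation $f(x)=0$ this reads $f(y')=a_{j+1}-a_j-2s$ and $f(x')=f(y')+L-T_j$. When $a_{j-1}+a_j\le\tfrac{L}{2}$, the choice $s=\tfrac{L}{2}-a_{j-1}-a_j$ keeps the tree value $f(x')=a_{j-1}$ and puts the whole correction $\delta$ on $y'$. In the example above this gives $f(x')=0.01L$ and $f(y')=-0.08L$.
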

\begin{proof}
The balanced case ($T_j\le\tfrac12L$) is Proposition~\ref{prop:balanced}(i). Let $T_j>\tfrac12L$;
write $x,y$ for the endpoints of $c_j$, $x',y'$ for their outer cycle neighbours, and
$R:=L-T_j$, so $d(x',y')=\min(T_j,L-T_j)=R$. With $\varepsilon=1-\alpha$ and $e=\varepsilon/d$,
$\rho=\mu_x^\alpha-\mu_y^\alpha$ carries $\alpha-e$ at $x$, $-(\alpha-e)$ at $y$, $+e$ at $x'$ and
at each leaf of $x$, $-e$ at $y'$ and at each leaf of $y$. The coupling that moves $\alpha-e$ from
$x$ to $y$ (cost $a_j$), $e$ from $x'$ to $y'$ along the wrap (cost $R$), and $e$ from each leaf of
$x$ to the matching leaf of $y$ (cost $a_j+b_j+b_{j+1}$) has total cost
$V=(\alpha-e)a_j+eR+(d-2)e(a_j+b_j+b_{j+1})$. The potential $f(x)=a_j$, $f(y)=0$,
$f(\ell)=a_j+b_j$ ($\ell$ a leaf of $x$), $f(\ell')=-b_{j+1}$ ($\ell'$ a leaf of $y$),
$f(x')=u$, $f(y')=v$ with $u-v=R$ gives $\langle f,\rho\rangle=V$ for every admissible $(u,v)$; a
$1$-Lipschitz choice exists exactly because $a_j\le\tfrac12L$ and $T_j>\tfrac12L$: the two
hypotheses are precisely the binding constraints on the gap $u-v=R$, and the leaf constraints are
implied by the core ones as the leaves are pendant. Kantorovich duality then squeezes
$V=\langle f,\rho\rangle\le W(\mu_x^\alpha,\mu_y^\alpha)\le\mathrm{cost}=V$, so $W=V$ and
$\kappa_\alpha=1-V/a_j$; as $V$ is affine in $\varepsilon$ this is independent of $\alpha$, giving
$\kappa_{c_j}a_j=(a_j-V)/\varepsilon=F^B_j+(2T_j-L)/d=F^W_j$.
\end{proof}

\begin{proposition}\label{prop:sunglobal}
For every sun $S_{g,d}$ the discrete Einstein metric is unique up to scaling among
geodesic weightings --- those in which no cycle edge exceeds $\tfrac12L$: it is the
symmetric metric of Proposition~\ref{prop:sununiq}.
\end{proposition}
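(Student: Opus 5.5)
The plan is to reduce everything to the cycle weights $a_1,\dots,a_g$ and then run a discrete maximum principle against a summation identity. The case $g=3$ is Proposition~\ref{prop:g3global}, so we assume $g\ge4$. Let $w$ be a geodesic Einstein metric of curvature $\kappa$. By Proposition~\ref{prop:sunrigid}(i) the leaves at each cycle vertex carry a common weight, and $a_j+a_{j+1}=C\,b_j$ with $C=4-\kappa d$ (indices as in Lemma~\ref{lem:fmax}). Substituting this into Lemma~\ref{lem:fmax} gives $(d-2)(b_j+b_{j+1})=(d-2)(s_j+2a_j)/C$. The Einstein condition $\kappa a_j=\max(F^B_j,F^W_j)$ then becomes, for every $j$,
\[
\gamma\,a_j-\beta\,s_j=m_j,\qquad m_j:=(2T_j-L)^{+}\ge0,
\]
with $\beta=1+\tfrac{d-2}{C}$ and $\gamma=\kappa d-2+\tfrac{2(d-2)}{C}$. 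Both constants depend only on $\kappa$, and $\beta>0$ because $C>0$ (forced by the leaf relation with positive weights). So the whole problem is one piecewise-linear cyclic recurrence in the unknowns $a_j$ and the single scalar $\kappa$.

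\emph{Summation identity.} Summing over $j$ and using $\sum_j s_j=2L$ gives $(\gamma-2\beta)L=\sum_j m_j\ge0$, so $\gamma\ge2\beta$.

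\emph{Minimum principle.} Let $j_0$ be an index minimising $a_j$, so that $s_{j_0}\ge 2a_{j_0}$. If the window at $j_0$ is balanced ($m_{j_0}=0$), then $\gamma a_{j_0}=\beta s_{j_0}\ge 2\beta a_{j_0}$. This only reproduces $\gamma\ge 2\beta$, so the minimum principle must be run at a maximum rather than a minimum. Let $j_1$ maximise $a_j$, so $s_{j_1}\le 2a_{j_1}$, and $\gamma a_{j_1}=\beta s_{j_1}+m_{j_1}$.

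\emph{Key step.} The key step is to show that all $m_j$ vanish, which lands $w$ in the balanced cone. Proposition~\ref{prop:sunbalsym} then makes $w$ symmetric, and Proposition~\ref{prop:sununiq} identifies it with $a^\ast$. I would attack this by bounding the wrap set $J=\{j:\ T_j>L/2\}$ combinatorially. Two disjoint three-edge windows cannot both exceed $L/2$, so every pair of windows in $J$ overlaps and $J$ lies in a run of at most five consecutive indices. Off $J$, the homogeneous recurrence $a_{j-1}+a_{j+1}=(\gamma/\beta)a_j$ holds with $\gamma/\beta\ge2$. On the long balanced stretch this forces a convex, $\cosh$-type profile. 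Its maximum must then sit inside or next to $J$, where the weights are large, which is exactly where $T_j$ is large. I would then close the argument with an explicit comparison on the short run $J$, which has at most five unknowns. Substituting the $\cosh$ boundary data, the goal is to show that $\sum_{j\in J}m_j$ cannot equal $(\gamma-2\beta)L$ unless both sides vanish. The expected mechanism is that the wrap terms $(2T_j-L)/d$ carry a negative coupling to far edges, which makes $\sum m_j$ grow strictly more slowly than $(\gamma-2\beta)L$ along the convex profile.

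\emph{Short cycles and the main obstacle.} For $g=4,5$ the all-wrap regime ($J$ everything) is already excluded by the resonance computation of Remark~\ref{rem:sunrigid}, since there $|\theta^\ast|>2$. What remains for $g\le5$ are the finitely many mixed sign patterns of $J$; on each of these the system is linear and small, and I would check directly, as in Proposition~\ref{prop:g3global}, that it admits no positive non-symmetric solution. The main obstacle is this mixed-regime analysis for general $g$: the off-diagonal signs of the wrap correction destroy the cooperative (Perron--Frobenius) structure, so a clean monotonicity argument is not available. I expect to need the explicit localisation of $J$ together with the convexity of the balanced profile to finish.
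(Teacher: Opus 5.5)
Your proposal does not prove the statement. The mixed-regime exclusion is never carried out, and the route you sketch for it rests on a sign error.

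\textbf{The sign error.} From Lemma~\ref{lem:fmax} and $b_j+b_{j+1}=(s_j+2a_j)/C$ one gets
\[
d\kappa a_j=2a_j-s_j-\tfrac{(d-2)(s_j+2a_j)}{C}+m_j ,
\]
that is, $\gamma a_j+\beta s_j=m_j$, not $\gamma a_j-\beta s_j=m_j$. As a check, at the symmetric balanced metric one has $m_j=0$ and $s_j=2a_j$, and indeed $\gamma=-2\beta<0$ there.

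Summing therefore gives $(\gamma+2\beta)L=\sum_j m_j$. Since $\gamma+2\beta=\psi(c)=c+4(d-2)/(4-c)$ with $c=\kappa d$, this is exactly the paper's identity \eqref{eq:sumT}. In a mixed regime it forces $\psi(c)>0$, i.e.\ $c>c_{\mathrm B}$, not $\gamma\ge2\beta$.

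Off $J$ the recurrence is $a_{j-1}+a_{j+1}=\theta a_j$ with $\theta=-\gamma/\beta=2-\psi(c)/\beta<2$. So each balanced stretch is strictly concave (sine-type), and its minima sit at the ends adjacent to $J$. It is not a convex $\cosh$ profile peaking next to $J$, so the heuristic on which your key step is built points the wrong way. With the correct sign, the minimum principle you abandoned does have force: a balanced minimiser $j_0$ gives $(\gamma+2\beta)a_{j_0}\le0$, contradicting the summation identity. Hence every minimiser of $(a_j)$ in a mixed regime must be a wrap edge.

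\textbf{The missing key step.} The actual content of the proposition is ruling out mixed regimes $0<|J|<g$, and this is not proved. The explicit comparison on $J$ is left unsolved, the mixed sign patterns for $g=4,5$ are left unchecked, and the boundary girth $g=6$ is not mentioned. At $g=6$ the symmetric metric has $T_j=L/2$ exactly and $c_{\mathrm B}=c_{\mathrm W}$, so it needs separate treatment.

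Your stated reason for avoiding Perron--Frobenius is also mistaken. The wrap term $(2T_j-L)/d$ has coefficient $+\tfrac1d$ on $a_{j-1},a_j,a_{j+1}$ and $-\tfrac1d$ on every other cycle edge. So it exactly cancels the $-\tfrac1d$ neighbour couplings of $F^B_j$. The far couplings it adds have the same sign as every other off-diagonal coefficient of $w\mapsto(\kappa_e w_e)_e$. The regime operator therefore stays cooperative, and it is irreducible through the leaves.

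That is the idea you are missing, and it is what the paper's proof uses. By Collatz--Wielandt, $c\le\max_i(N_\sigma y)_i/y_i$ for every $y>0$, strictly unless $y$ is the Perron vector. The paper tests this with the symmetric vectors:
\begin{itemize}
\item For $g\ge7$, take the all-balanced vector $x_{\mathrm B}$. Its wrap rows fall below $c_{\mathrm B}$ by $g-6>0$, giving $c<c_{\mathrm B}$.
\item For $g\le5$, take the all-wrap vector $x_{\mathrm W}$. Its balanced rows fall below $c_{\mathrm W}$ by $6-g>0$, giving $c<c_{\mathrm W}$.
\end{itemize}
Each contradicts the lower bound $c>c_{\mathrm B}$, respectively $c\ge c_{\mathrm W}$, from the summation identity. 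For $g=6$, $x_{\mathrm B}$ is an eigenvector of every regime operator and has empty wrap set, so no mixed regime is self-consistent.
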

\begin{proof}
By Proposition~\ref{prop:sunrigid}(i) an Einstein metric is determined by its cycle weights
$(a_j)$ and curvature $\kappa$; set $c=\kappa d$. On a geodesic weighting each cycle edge $c_j$
lies in one of two transport regimes according as $T_j:=a_{j-1}+a_j+a_{j+1}$ is $\le$ or
$>\tfrac12L$, and $\kappa_{c_j}w(c_j)=\max(F^B_j,F^W_j)$: the through-arc value $F^B_j$, raised
to the wrap value $F^W_j=F^B_j+(2T_j-L)/d$ exactly when $T_j>\tfrac12L$ (Lemma~\ref{lem:fmax}).
If every cycle edge is balanced the metric is symmetric (Proposition~\ref{prop:sunbalsym}); the
all-wrap case ($g\le5$) is Remark~\ref{rem:sunrigid}, and $g=3$ is
Proposition~\ref{prop:g3global}. It remains to exclude a \emph{mixed} regime, with wrap-set $S$
satisfying $0<|S|<g$.

The leaf equation of Proposition~\ref{prop:sunrigid}(i) reads $a_{v-1}+a_v=(4-c)b_v$, so
$c<4$. Summing the reduced Einstein equations over the cycle edges gives
\begin{equation}\label{eq:sumT}
\sum_{j\in S}T_j=\tfrac{L}{2}\bigl(\psi(c)+|S|\bigr),\qquad
\psi(c)=c+\frac{4(d-2)}{4-c},
\end{equation}
and on $c<4$ the function $\psi$ is strictly increasing, with $\psi(c_{\mathrm B})=0$ at the
symmetric all-balanced value $c_{\mathrm B}=2-2\sqrt{d-1}$ (Theorem~\ref{thm:sun}) and
$\psi(c_{\mathrm W})=6-g$ at the symmetric all-wrap value $c_{\mathrm W}$. The wrap edges have
$T_j>\tfrac12L$, so $\sum_{j\in S}T_j>\tfrac12|S|L$ and \eqref{eq:sumT} gives $\psi(c)>0$, hence
$c>c_{\mathrm B}$; the balanced edges have $T_j\le\tfrac12L$, so
$\sum_{j\notin S}T_j\le\tfrac12(g-|S|)L$ and \eqref{eq:sumT} gives $\psi(c)\ge6-g$, hence
$c\ge c_{\mathrm W}$.

The metric is at the same time the positive Perron vector of the reduced nonnegative
irreducible operator $N_\sigma$ of the regime $\sigma$, so by the Collatz--Wielandt formula
$c=\min_{y>0}\max_i(N_\sigma y)_i/y_i$; in particular $c\le\max_i(N_\sigma y)_i/y_i$ for every
$y>0$, with equality only when $y$ is proportional to the Perron vector. Take $y$ to be the
symmetric all-balanced vector $x_{\mathrm B}$ if $g\ge7$, or the all-wrap vector $x_{\mathrm W}$
if $g\le5$. A direct computation of the row ratios gives $\max_i(N_\sigma y)_i/y_i=c_{\mathrm B}$,
resp.\ $c_{\mathrm W}$ (the balanced and leaf rows attain it, the wrap rows lie below by
$6-g\le0$); and $y$ is not proportional to the Perron vector once the regime is non-uniform, so
the inequality is strict, $c<c_{\mathrm B}$, resp.\ $c<c_{\mathrm W}$. Either way this contradicts
the bound of the previous paragraph. Finally, for $g=6$ one has $6-g=0$ and
$c_{\mathrm B}=c_{\mathrm W}$, and $x_{\mathrm B}$ is the Perron vector of $N_\sigma$ at
$c=c_{\mathrm B}$ for \emph{every} regime; its wrap-set is empty (all $T_j=\tfrac12L$), so only
the all-balanced regime is self-consistent. Hence no mixed regime occurs, and the symmetric
metric is the only Einstein metric among geodesic weightings.
\end{proof}

It remains to treat the non-geodesic weightings, in which some cycle edge exceeds half
the perimeter --- the degenerate case set aside so far (cf.\
Proposition~\ref{prop:balanced}). They carry no Einstein metric at all.

\begin{proposition}\label{prop:nongeo}
No sun $S_{g,d}$ admits a discrete Einstein metric with a non-geodesic cycle edge.
\end{proposition}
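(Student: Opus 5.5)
Suppose $w$ is a discrete Einstein metric of curvature $\kappa$ on $S_{g,d}$ in which some cycle edge $c_j=\{x,y\}$ has weight $a_j>\tfrac12L$. At most one cycle edge can exceed half the perimeter, so every other cycle edge is geodesic and $d(x,y)=L-a_j=:R<a_j$. The plan is to pin two curvatures, one at a leaf and one at the long edge, and show they cannot coincide. This is in the spirit of Example~\ref{ex:nonexist}.

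First I will use the leaves. Every leaf attached to a vertex $v\neq x,y$ still obeys the tree formula, because the two cycle edges at $v$ are geodesic. At $x$ and $y$ the distances $d(\ell,z)=w_\ell+d(x,z)$ also make the leaf computation go through, with $S_x$ replaced by the \emph{geodesic} weighted degree $\tilde S_x=R+a_{j-1}+(d-2)b_x$. I will check this by rerunning the two-point potential in the proof of Proposition~\ref{prop:balanced}. The leaf equations then give, as in Proposition~\ref{prop:sunrigid}(i), equal leaf weights $b_v$ and $\tilde S_v=(2+(1-\kappa)d)b_v$. In particular $\kappa<4/d$ (so $c=\kappa d<4$), and the leaf weights at $x$ and $y$ are controlled by $R$ rather than by $a_j$.

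Next I will compute $\kappa_{c_j}$. Since $d(x,y)=R$ is realised by the complementary arc, the measures $\mu_x^\alpha,\mu_y^\alpha$ see the metric of the path through the arc $x'\!-\!x\!\leadsto\!y\!-\!y'$, with the long edge unused. I will repeat the potential/coupling squeeze of Lemma~\ref{lem:fmax} with $a_j$ replaced by $R$ and the outer neighbours re-identified. For $g\ge4$, along the arc $x$'s cycle neighbour $v_{j+1}$-side is nearer $y$, which can make $d(x',y')$ small or zero. The expected outcome is an explicit value $\kappa_{c_j}R=F(w)$ in which the term $2a_j$ is replaced by $2R$, plus a nonnegative correction from the shortened outer pair.

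Finally, the contradiction. I will look for a clean inequality rather than solving the full system. Comparing $\kappa_{c_j}$ with the curvature of the adjacent cycle edge $c_{j+1}$, the Einstein equations on the two edges sharing $y$ should force $a_j$ to satisfy the same linear relation as a geodesic edge, namely $\kappa a_j=\dots$ with the sum over neighbours. But $a_j$ enters no distance, since it is never geodesic. So $a_j$ can be varied freely without changing any curvature except through the degree-neutral term. Indeed, $\kappa_{c_j}$ depends on $a_j$ only through $d(x,y)$, which is $R$, independent of $a_j$. Likewise the leaf curvatures at $x,y$ depend only on geodesic distances. Hence all curvatures are unchanged if $a_j$ is lowered to $R$, i.e.\ made exactly $\tfrac12L'$ where $L'=2R$.

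This reduces the problem to a boundary geodesic weighting, which by Proposition~\ref{prop:sunglobal} must be the symmetric metric. For the symmetric metric, however, $a_j=R$ would mean $a_j$ equals the sum of the other $g-1$ equal cycle weights, impossible for $g\ge3$. The main obstacle is the middle step: verifying that the curvature formulas, especially $\kappa_{c_j}$ and the leaf rows at $x,y$, genuinely depend on $a_j$ only through $\min(a_j,R)$. This amounts to checking that the optimal Kantorovich potential never uses the long edge. I would first try to write explicit potentials and couplings as in Lemma~\ref{lem:fmax} and verify that they match.
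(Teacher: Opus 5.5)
Your argument is correct, and it takes a genuinely different route from the paper.

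\emph{The paper's route.} The paper proves the statement head-on. It writes explicit Kantorovich certificates for the five curvature types touched by the heavy edge (Lemma~\ref{lem:ngcert}) and substitutes the leaf equations. It then eliminates the Einstein system down to a single $g$-independent relation $\Phi=0$, which it rules out by sign estimates, treating $3\le d\le 6$ and $g=3$ separately.

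\emph{Your route.} You turn the $a_0$-independence into a symmetry of the Einstein system; the paper records this fact only to drop $a_0$ from its equations. Lowering $a_j$ to $R=\sum_{i\ne j}a_i$ changes no curvature, so it yields an Einstein metric on the face $a_j=\tfrac12L'$. That weighting is geodesic in the sense of Proposition~\ref{prop:sunglobal} but not symmetric, a contradiction. There is no circularity, since Proposition~\ref{prop:sunglobal} does not use the present statement.

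\emph{The step you flag as the main obstacle is not one.} It needs no potentials. By Lemma~\ref{lem:local}, each $\kappa_e$ is determined by the combinatorial degrees and the distance matrix. The degrees are unchanged, because the long edge is still an edge and the measures still charge its endpoints. For every $a_j\ge R$ the distance matrix is the metric of the cut tree $G-c_j$ (Lemma~\ref{lem:cuttree}); ties at $a_j=R$ are harmless. So the matrix does not involve $a_j$. Your leaf computation, the explicit formula for $\kappa_{c_j}$, and the comparison with $c_{j+1}$ can therefore be dropped.

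\emph{What your route buys.} It is short, and it explains why the non-geodesic case is empty: the non-geodesic cone is a cylinder over that boundary face and carries no new Einstein equations. As a consistency check, for $g\ge4$ formulas (A) and (B) of Lemma~\ref{lem:ngcert} coincide with $F^W_j$ of Lemma~\ref{lem:fmax} at $a_0=P$.

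\emph{What it costs.} You lose independence. The uniqueness in Theorem~\ref{thm:sununcond} then rests entirely on Proposition~\ref{prop:sunglobal}, applied at a non-symmetric boundary point. For $g\ge4$ that point lies in a mixed regime with wrap set $\{c_{j-1},c_j,c_{j+1}\}$; for $g=3$ it rests on Proposition~\ref{prop:g3global}. So you need the mixed-regime Collatz--Wielandt argument, and Lemma~\ref{lem:fmax} with its inclusive bound $a_j\le\tfrac12L$, to be sound exactly there. The paper's explicit elimination gives a check of the non-geodesic case that depends on neither.
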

\begin{proof}
At most one cycle edge can be non-geodesic (two would sum to more than $L$); call it $e_0$,
of weight $a_0>\tfrac12L$, and put $P:=L-a_0<a_0$. Every geodesic then avoids $e_0$
--- replacing an $e_0$-step by the complementary arc (length $P$) shortens any path --- so the
graph metric equals that of the cut tree $G-e_0$, with $d(0,1)=P$; in particular every
curvature is independent of $a_0$. Computing the five affected curvature types by
explicit Kantorovich certificates (Lemma~\ref{lem:ngcert} of Appendix~\ref{app:nongeo}: heavy
edge, its two cycle neighbours, the two leaf classes at its endpoints; note the coefficient of $a_{g\mp2}$
in the neighbour rows is negative,
the chord having re-routed the binding constraint) and substituting the leaf equations
$b_v=\delta_v/\rho$, $\rho=4-\kappa d>0$, the Einstein system reduces, with
$\mu=\rho+d-2$, $\eta=d-2-\rho$, $\nu=\rho^2-2\rho-2(d-2)$, $\xi=\nu+2\rho$, to
\[
P\nu=(a_1+a_{g-1})\,\eta,\qquad a_{g-1}\xi=P\mu+a_{g-2}\eta,\qquad
a_j\nu=\mu\,(a_{j-1}+a_{j+1})\ (2\le j\le g-2),
\]
together with the mirror of the middle equation. For $g\ge4$ the equations $a_j\nu=\mu(a_{j-1}+
a_{j+1})$ have a positive solution only if $\nu>0$, and the first equation then forces $\eta>0$;
since $P-a_1-a_{g-1}=\sum_{2\le j\le g-2}a_j>0$, the first equation moreover gives $\nu<\eta$
strictly. The interval $1+\sqrt{2d-3}<\rho<d-2$ on which $\nu>0$ and $\eta>0$ both hold is empty
unless $d\ge7$, so for $3\le d\le6$ there is no solution. For $d\ge7$, eliminating the weight
sums $a_1+a_{g-1}$, $a_2+a_{g-2}$ and $\sum_{2\le j\le g-2}a_j$ from the equations above leaves
the single $g$-independent relation
\[
\Phi:=\nu\eta(\eta-\nu)+\mu\nu(\eta+\xi)-2\mu\eta(\eta+\mu)=0 .
\]
But on $0<\nu\le\eta$ one has $\nu(\eta-\nu)\le\tfrac14\eta^2$ and
$\nu(\eta+\xi)\le2\eta(\eta+\rho)$, whence
$\Phi\le\eta\bigl(\tfrac14\eta^2-2\mu(d-2)\bigr)$; as $\eta<d-2<\mu$, this is at most
$-\tfrac74\eta(d-2)^2<0$, contradicting $\Phi=0$. For $g=3$, $P=a_1+a_2$ forces $\nu=\eta$, and
the two branches of the resulting equations each force $d=2$. Hence no positive solution exists.
\end{proof}

\begin{theorem}\label{thm:sununcond}
Every sun $S_{g,d}$ $(g\ge3,\ d\ge3)$ carries exactly one discrete Einstein metric up to
scaling: the symmetric metric of Proposition~\ref{prop:sununiq}.
\end{theorem}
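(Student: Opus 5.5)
The theorem is an assembly of results already established; the plan is to split every positive weighting of $S_{g,d}$ according to whether it is geodesic, and invoke the matching proposition in each case.

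\emph{Existence.} Proposition~\ref{prop:symexist} gives, for every $g\ge3$ and $d\ge3$, a symmetric weighting (cycle weight $a^\ast$, leaf weight $1$) on which $\Delta(a^\ast)=0$. By symmetry the curvature is constant on each of the two edge orbits, so this weighting is a discrete Einstein metric. Because the cycle weights are equal and $g\ge3$, each cycle edge has weight $a^\ast\le L/3<L/2$, so the weighting is geodesic. This is the metric named in the statement.

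\emph{Uniqueness.} Let $w$ be any discrete Einstein metric on $S_{g,d}$. At most one cycle edge can exceed $\tfrac12L$, so exactly one of two cases holds.
\begin{enumerate}
\item[(a)] Some cycle edge exceeds $\tfrac12L$. Proposition~\ref{prop:nongeo} excludes this case outright.
\item[(b)] Every cycle edge is at most $\tfrac12L$, so $w$ is geodesic. Proposition~\ref{prop:sunglobal} then forces $w$ to be a scalar multiple of the symmetric metric.
\end{enumerate}
Proposition~\ref{prop:sunglobal} itself combines several earlier results:
\begin{itemize}
\item Proposition~\ref{prop:sunrigid}(i) equalizes the leaves at each vertex and reduces the unknowns to the cycle weights and $\kappa$.
\item Proposition~\ref{prop:sunbalsym} handles the all-balanced regime.
\item Remark~\ref{rem:sunrigid} handles the all-wrap regime for $g\le5$, and Proposition~\ref{prop:g3global} handles $g=3$.
\item The sum identity \eqref{eq:sumT} together with the Collatz--Wielandt comparison excludes the mixed regimes.
\end{itemize}
So no separate argument is needed here, only a check that these cases cover all weightings. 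Within the symmetric class, uniqueness up to scaling is the strict monotonicity of $\Delta$ in Proposition~\ref{prop:symexist}.

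The step to verify most carefully is the completeness of the case split, in particular the boundary weightings.
\begin{itemize}
\item A cycle edge with weight exactly $\tfrac12L$ is still geodesic, so it falls under case (b). Lemma~\ref{lem:fmax} covers it because its hypothesis is $a_j\le\tfrac12L$.
\item A window with $T_j=\tfrac12L$ has $F^W_j=F^B_j$, so the choice between the balanced and wrap regimes is immaterial and the regime partition used in Proposition~\ref{prop:sunglobal} remains well defined. This matters for $g=6$, where the symmetric metric itself sits on that boundary.
\item For $g=3$, Lemma~\ref{lem:fmax} assumes $g\ge4$. The geodesic case is therefore taken from Proposition~\ref{prop:g3global}, which already covers every regime of the triangle.
\end{itemize}
Once these boundary cases are checked, existence and the two uniqueness cases together give exactly one Einstein metric up to scaling for every $g\ge3$ and $d\ge3$.
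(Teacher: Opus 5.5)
Your proposal is correct and follows the paper's proof: existence comes from Proposition~\ref{prop:symexist}, weightings with a non-geodesic cycle edge are excluded by Proposition~\ref{prop:nongeo}, and uniqueness among geodesic weightings is Proposition~\ref{prop:sunglobal}. Your boundary checks (a cycle edge of weight exactly $\tfrac12L$, windows with $T_j=\tfrac12L$ where $F^W_j=F^B_j$, and $g=3$ via Proposition~\ref{prop:g3global}) agree with how those results are stated and only make the case split explicit.
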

\begin{proof}
Existence is Proposition~\ref{prop:symexist}. A metric with a non-geodesic cycle edge is
excluded by Proposition~\ref{prop:nongeo}; among geodesic weightings uniqueness is
Proposition~\ref{prop:sunglobal}.
\end{proof}

\begin{remark}\label{rem:sunglobal-scope}
Every ingredient of this chain is established by an explicit Kantorovich certificate: the
harmonic reduction of Proposition~\ref{prop:sunbalsym}, the value and feasibility identities of
Lemma~\ref{lem:fmax}, and the five non-geodesic curvature formulas of
Proposition~\ref{prop:nongeo}, proved in Appendix~\ref{app:nongeo}. The remaining steps --- the
reduction to the relation $\Phi=0$ and its negativity --- are elementary algebra.
\end{remark}

\begin{remark}\label{rem:symexist}
The same one-variable argument produces an Einstein metric on any decorated cycle
with exactly two edge orbits (a cycle carrying one identical ring of pendant leaves); with
$r\ge3$ orbits it becomes an $(r-1)$-dimensional fixed-point problem, accessible by degree
theory rather than the intermediate-value theorem. The dividing line for existence is thus
not balanced versus unbalanced but symmetric versus rigid: non-existence
requires an asymmetric edge whose curvature is pinned by a short-cycle $2$-ball with no
symmetric partner to match it, as for $C_3^{+}$ in Example~\ref{ex:nonexist}. In
particular the small suns $S_{3,d},S_{4,d},S_{5,d}$ are Einstein yet never balanced,
enlarging the family of unbalanced Einstein graphs beyond the bare short cycles. Uniqueness
is likewise restored by decoration (Proposition~\ref{prop:sunrigid}): the bare $C_3$ and $C_4$
carry a two- and a one-parameter family of Einstein metrics (the whole triangle-inequality cone
at $\kappa=\tfrac32$, respectively $(a,b,a,b)$ at $\kappa=1$), while every sun is uniquely
Einstein in its transport regime.
\end{remark}

\begin{remark}\label{rem:exist}
Non-existence is the exception. The two obstructions isolated in this paper are a curvature
pinned by a short cycle with no symmetric partner, which forces non-existence (as on $C_3^{+}$,
Example~\ref{ex:nonexist}), and a reducible $\RG$, which allows non-uniqueness (as on the bare
$C_4$). Apart from these, an Einstein metric exists on every unbalanced unicyclic graph we have
examined; a full characterisation of the unicyclic graphs admitting one remains open.
\end{remark}

\begin{remark}[Discrete Ricci flow]\label{rem:flow}
The discrete Einstein metrics are, up to scaling, the fixed points of the normalized Ricci flow
$\dot w_e=-(\kappa_e-\bar\kappa)\,w_e$ on the edge lengths, with $\bar\kappa$ the weighted mean
curvature and $\sum_e w_e$ held fixed; related flows, under other conventions for the relation
between the weight and the metric length, are studied in \cite{BLLWY,BHLL}. In the balanced
regime the identity $\kappa_e w_e=-(\RG w)_e$ of Theorem~\ref{thm:einstein} makes the flow
linear, $\dot w=(\RG-\bar\kappa\,I)\,w$, and since $\RG$ has a simple largest eigenvalue $\lmax$
with positive eigenvector $w^\ast$ (Lemma~\ref{lem:pf}), a metric started in the balanced cone
converges to $w^\ast$ at the exponential rate $\lmax-\lambda_2$ fixed by the gap to the second
eigenvalue, provided $w^\ast$ is interior to the cone; for a regular sun this is the case at every
girth $g\ge7$. Both failures of the balanced picture are visible for the flow: on $C_3^{+}$ there
is no fixed point (Example~\ref{ex:nonexist}), and on $C_4$, where $\RG$ is reducible
(Section~\ref{sec:short}), the fixed points form a one-parameter family.

At the boundary girth $g=6$ the symmetric metric lies on $3a=\tfrac12L$, where the curvature
$\kappa_{c_j}w_{c_j}=\max(F^B_j,F^W_j)$ of a cycle edge is not differentiable
(Lemma~\ref{lem:fmax}), so the linearization is one-sided. On the balanced side it is $\RG$; on a
wrapping side the cycle block acquires, from the gradient of the wrap term $(2T_j-L)/d$, the
entries $\tfrac1d\bigl(2\cdot\mathbf 1[k\in\{j-1,j,j+1\}]-1\bigr)$ coupling $c_j$ to every cycle
edge $c_k$. This coupling is absent for $g\ge7$, where $w^\ast$ is interior and $\RG$ is the full
linearization.
\end{remark}

\appendix

\section{Kantorovich certificates for the non-geodesic curvature formulas}
\label{app:nongeo}

\subsection*{Setting}

We work throughout in the \emph{non-geodesic regime}
\[
a_0>\tfrac{L}{2},\qquad P:=L-a_0<a_0 ,
\]
so $e_0$ is the (necessarily unique --- two such edges would sum to more than
$L$) non-geodesic cycle edge of the sun $S_{g,d}$, with cycle vertices
$0,\dots,g-1$, cycle edges $e_j=\{j,j+1\bmod g\}$ of weight $a_j$, and $d-2$
leaves of weight $b_j$ at each vertex $j$. All other cycle edges satisfy
$a_j\le P<\tfrac L2<L-a_j$, hence are geodesic: $d(j,j+1)=a_j$ for $j\ne0$.
Curvature conventions are those of Section~\ref{sec:prelim}; note the
normaliser of $\kappa_\alpha$ is the graph \emph{distance} $d(x,y)$, not the
edge weight. We abbreviate
\[
\eps:=1-\alpha,\qquad e:=\eps/d .
\]
By Kantorovich duality \eqref{eq:wasserstein},
\begin{equation}\label{eq:duality}
\sum_z f(z)\,\rho(z)\ \le\ W(\mu_x^\alpha,\mu_y^\alpha)\ \le\
\sum_{(u,v)}\pi(u,v)\,d(u,v),\qquad \rho:=\mu_x^\alpha-\mu_y^\alpha,
\end{equation}
for every function $f$ that is $1$-Lipschitz for $d$ and every coupling $\pi$
of $(\mu_x^\alpha,\mu_y^\alpha)$. If a plan $\pi$ and a potential $f$ with
\emph{matching value} $V$ are exhibited, then $W=V$ exactly. Since $\rho$ is
supported on the finite set $S:=\{x,y\}\cup N(x)\cup N(y)$, it suffices that
$f$ be $1$-Lipschitz \emph{on pairs of $S$}: the McShane extension
$\hat f(v)=\min_{s\in S}\bigl(f(s)+d(v,s)\bigr)$ is $1$-Lipschitz on all of
$V$ and agrees with $f$ on $S$, and $\langle\hat f,\rho\rangle=\langle
f,\rho\rangle$.

\begin{lemma}\label{lem:cuttree}
In the non-geodesic regime the graph metric of $S_{g,d}$ coincides with the
metric of the cut tree $T:=S_{g,d}-e_0$: no geodesic uses $e_0$, and
$d(0,1)=P$.
\end{lemma}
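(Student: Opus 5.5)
The argument is a direct comparison of path lengths. In $S_{g,d}$ every path either uses $e_0$ or does not, and the paths avoiding $e_0$ are exactly the paths of the cut tree $T=S_{g,d}-e_0$. Hence $d_{S}(u,v)\le d_T(u,v)$ trivially, since $T$ is a spanning subgraph. For the reverse inequality I will show that any walk using $e_0$ can be replaced by a strictly shorter walk in $T$. Take a path $\gamma$ from $u$ to $v$ that traverses $e_0=\{0,1\}$. A shortest path traverses each edge at most once, so it uses $e_0$ exactly once. Replace that single traversal, of length $a_0$, by the complementary arc $1\to2\to\dots\to g-1\to0$ (or its reverse), of length $P=L-a_0$. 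The result is a walk in $S_{g,d}$, possibly revisiting cycle vertices, whose length is $\ell(\gamma)-a_0+P<\ell(\gamma)$, because $P<a_0$ in the non-geodesic regime. It may reuse cycle edges that were already on $\gamma$, but it never uses $e_0$. Shortcutting repeated vertices only decreases length and cannot introduce $e_0$, so we obtain a path in $T$ of length at most $\ell(\gamma)-(a_0-P)<\ell(\gamma)$. Thus no geodesic of $S_{g,d}$ uses $e_0$.

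It follows that the infimum defining $d_S$ may be restricted to paths in $T$, so $d_S=d_T$. In particular, $d(0,1)$ is the $T$-distance between $0$ and $1$. In the tree $T$ the unique path joining them is the complementary arc, of length $\sum_{j\ne0}a_j=L-a_0=P$.

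The only point needing care is that the modified walk may revisit the arc. This is handled by the shortcutting remark, or more simply by noting that walk length bounds path distance from above, so $d_T(u,v)\le\ell(\gamma)-a_0+P$ directly. None of this depends on the leaves, since pendant edges lie in $T$ and are unaffected by the replacement. I expect no real obstacle; the one inequality used is $P<a_0$, which is the defining hypothesis $a_0>\tfrac L2$.
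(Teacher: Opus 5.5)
Your argument is correct and is essentially the paper's own proof: each traversal of $e_0$ is replaced by the complementary arc of length $P<a_0$, which gives a strictly shorter walk in $T$, hence $d_{S_{g,d}}=d_T$ and $d(0,1)=P$. Your remark that a walk's length already bounds $d_T$ from above, so revisited cycle vertices cause no trouble, just makes explicit a step the paper leaves implicit.
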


\begin{proof}
Any walk traversing $e_0$ can replace each such step by the complementary arc
$0,g-1,g-2,\dots,2,1$ of length $P<a_0$; the modified walk is strictly
shorter. Hence no shortest path uses $e_0$, so $d_G=d_{G-e_0}=d_T$; in
particular $d(0,1)=P$.
\end{proof}

In $T$ the cycle opens into the spine $1-2-\cdots-(g-1)-0$, with edge weights
$a_1,\dots,a_{g-1}$ read left to right. Introduce the \emph{spine coordinate}
\begin{equation}\label{eq:coord}
t(1)=0,\qquad t(j)=a_1+\dots+a_{j-1}\quad(2\le j\le g-1),\qquad t(0)=P .
\end{equation}
Then for cycle vertices $d(i,j)=|t(i)-t(j)|$, a leaf $\ell$ at vertex $h$ has
$d(\ell,v)=b_h+d(h,v)$ for every $v\neq\ell$, and two leaves at vertices $h\ne h'$
are at distance $b_h+d(h,h')+b_{h'}$ (at the same vertex, $2b_h$).

The measures, however, are still those of the original graph: $0$ and
$1$ remain adjacent through the chord $e_0$, so $\mu_0^\alpha$ carries mass
$e$ at $1$ and $\mu_1^\alpha$ carries mass $e$ at $0$ --- but the chord enters
no distance. This makes all curvatures below $a_0$-independent, and produces
the sign flip in case (B).

\subsection*{The five certificates}

\begin{lemma}\label{lem:ngcert}
On $S_{g,d}$ in the non-geodesic regime ($a_0>\tfrac L2$, $P=L-a_0$), with
indices mod $g$, the Lin--Lu--Yau curvatures are, for every $g\ge3$:
\begin{enumerate}
\item[\textup{(A)}] the heavy edge $e_0=\{0,1\}$, normaliser $d(0,1)=P$:
\[
\kappa_{e_0}=1-\frac{\Psi_A}{P},\qquad
\Psi_A=\tfrac1d\bigl[(d-2)P-a_{g-1}-a_1+(d-2)(b_0+b_1)\bigr];
\]
\item[\textup{(B)}] its cycle neighbour $e_{g-1}=\{g-1,0\}$:
\[
\kappa_{e_{g-1}}=1-\frac{\Psi_B}{a_{g-1}},\qquad
\Psi_B=\tfrac1d\bigl[(d-4)a_{g-1}\,\boldsymbol{-}\,a_{g-2}+P+(d-2)(b_{g-1}+b_0)\bigr],
\]
and symmetrically \textup{(B$'$)} for $e_1=\{1,2\}$ with
$\Psi_{B'}=\tfrac1d[(d-4)a_1-a_2+P+(d-2)(b_1+b_2)]$;
\item[\textup{(C)}] a leaf edge at the vertex $0$:
\[
\kappa=1+\frac{-\delta_0+(4-d)b_0}{d\,b_0},\qquad \delta_0=a_{g-1}+P,
\]
and symmetrically \textup{(C$'$)} at the vertex $1$ with $\delta_1=P+a_1$;
\item[\textup{(D)}] every other edge obeys the tree formulas: for a cycle
edge $e_j$, $2\le j\le g-2$,
\[
\kappa_{e_j}a_j=\tfrac1d\bigl[2a_j-a_{j-1}-a_{j+1}-(d-2)(b_j+b_{j+1})\bigr]
\ (=F^B_j),
\]
and for a leaf edge at a vertex $v\in\{2,\dots,g-1\}$ the formula of
\textup{(C)} with $\delta_v=a_{v-1}+a_v$.
\end{enumerate}
Moreover in each case $W(\mu_x^\alpha,\mu_y^\alpha)$ is affine in $\eps$ and
the certificates below are valid for all $\alpha\in[\tfrac12,1)$, so
$\kappa_\alpha/\eps$ is constant there and equals the stated $\kappa$.
\end{lemma}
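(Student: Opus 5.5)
The approach is the primal--dual squeeze \eqref{eq:duality}: for each of the five edge types I will exhibit a coupling $\pi$ and a potential $f$ on $S=\{x,y\}\cup N(x)\cup N(y)$ with equal value $V(\eps)$, affine in $\eps$. Then $W=V$, and $\kappa_\alpha=1-V/d(x,y)$ is linear in $\eps$. By Lemma~\ref{lem:cuttree} every distance will be computed in the cut tree $T$ through the spine coordinate \eqref{eq:coord}. The one subtlety is that the measures still see the chord: $0$ and $1$ are mutual neighbours. For each edge I will first write out $\rho=\mu_x^\alpha-\mu_y^\alpha$, cancelling common neighbours. The core masses are $\alpha-e$ at $x$ and $-(\alpha-e)$ at $y$, each remaining neighbour carries $\pm e$, and $\alpha\ge\tfrac12$ keeps the core masses dominant so that the plans stay feasible.

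\emph{Case (A)} ($x=0$, $y=1$, normaliser $P$). The surplus sits at $g-1$ and at the leaves of $0$, and the deficit at $2$ and the leaves of $1$. In $T$ these lie on a line: $t(g-1)\le P=t(0)$ and $t(2)=a_1\ge0=t(1)$.

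The plan moves $\alpha-e$ from $0$ to $1$ at cost $P$. It moves $e$ from $g-1$ to $2$ at cost $P-a_{g-1}-a_1$, which is $\ge 0$ for $g\ge4$ and degenerate when $g=3$. Finally it pairs the leaves of $0$ with those of $1$ at cost $b_0+P+b_1$.

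The potential is $f=t$ on the spine, with $f(\ell)=P+b_0$ on leaves of $0$ and $f(\ell')=-b_1$ on leaves of $1$. It is $1$-Lipschitz in $T$ since $t$ is a geodesic coordinate along the spine and the leaves are pendant. The two values agree, giving $V=P-\eps\,\Psi_A$ and hence (A).

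\emph{Case (B)} ($x=g-1$, $y=0$). Here $0$ has neighbour $1$ through the chord. So $\rho$ has $+e$ at $g-2$ and at the leaves of $g-1$, and $-e$ at $1$ and at the leaves of $0$.

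In spine coordinates, $t(g-2)<t(g-1)<t(0)=P$, while $t(1)=0$ lies on the far side, before $g-2$. The natural plan sends $e$ from $g-2$ to $1$ at cost $t(g-2)=P-a_{g-1}-a_{g-2}$. This is where the negative coefficient of $a_{g-2}$ comes from.

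The potential must decrease from $x$ to $y$, so I take $f=-t$ on the spine. Its gap on the pair $(g-2,1)$ is exactly $t(g-2)$, so the binding constraint there holds with equality. Leaves are set to $f(x)\pm b$ as before. Matching the plan with this $f$ should yield $\Psi_B$, and (B$'$) follows by the reflection $j\mapsto 1-j$.

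\emph{Case (C)}, a leaf edge at $0$. $\mathrm{N}(0)$ still includes both $g-1$ and $1$, so this is the tree-formula computation with $\delta_0=a_{g-1}+d(0,1)=a_{g-1}+P$. I will reuse the pendant-edge argument of Proposition~\ref{prop:balanced}, replacing the weighted degree with geodesic distances to the neighbours. The potential is $f(v)=d(v,0)$ together with $f(\ell)=-b_0$.

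\emph{Case (D)}: for edges not incident to $0$ or $1$, none of the relevant pairs has a distance altered by deleting $e_0$ in a way that breaks the through-arc structure. The through arc is a spine geodesic in $T$, so Proposition~\ref{prop:balanced}(i)'s deleted-arc-tree comparison gives $F^B_j$ and the tree leaf formula. A small check is needed for $e_1$-adjacent indices, where $j=2$ or $g-2$ touches vertex $1$ or $g-1$. There $d(1,3)$ is still the spine distance, so nothing changes.

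The main obstacle is (B), and the edge cases $g=3,4$ of (A) and (B) where indices collide (e.g.\ $g-2=1$). There I must verify that the chosen $f$ remains $1$-Lipschitz on all of $S$ (leaf--leaf pairs across the chord in particular) and that no cheaper plan exists. I would handle these by listing all pairs of $S$ and checking each gap against the $T$-distance. For $g=3$ I would recompute (A) directly, since the masses at $g-1=2$ cancel.
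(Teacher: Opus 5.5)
Your framework is the paper's: a plan and a potential of equal value, all distances computed in the cut tree via the spine coordinate $t$, and the chord entering only the measures. Cases (A), (C), (C$'$) and (D) go through as you describe. For (D), your reduction to the tree formula on $T$ is a legitimate shortcut for the paper's explicit certificate: neither endpoint is $0$ or $1$, so the measures do not see the chord, and every support distance is a cut-tree distance, so locality applies. There is one slip in (A): the common value is $V=\alpha P+\eps\Psi_A=P-\eps(P-\Psi_A)$, not $P-\eps\Psi_A$, which would give $\kappa=\Psi_A/P$.

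The genuine gap is case (B), which is exactly the case carrying the sign flip; (B$'$) inherits it through the reflection. Your plan is the right one, but the potential $f=-t$ does not certify it for any $g\ge4$. It gives $f(g-2)-f(1)=-t(g-2)=-(P-a_{g-1}-a_{g-2})$. So the surplus $+e$ at $g-2$ and the deficit $-e$ at $1$ contribute $-e\,t(g-2)$ to $\langle f,\rho\rangle$, while the plan pays $+e\,t(g-2)$ on that pair. The dual value therefore falls short of the plan cost by $2e\,t(g-2)>0$ (already $2ea_1$ for $g=4$). The squeeze then yields only the one-sided bound $\kappa_{e_{g-1}}\ge 1-\Psi_B/a_{g-1}$, not the exact formula.

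Your check that the gap on $(g-2,1)$ is $t(g-2)$ holds only in absolute value. The core edge forces $f$ to decrease from $g-1$ to $0$, but only on that last segment. The outer pair forces $f$ to \emph{increase} along the spine from $1$ to $g-2$, so no monotone function of $t$ can be optimal. The missing idea is a tent peaking at $x=g-1$: on the spine take $f(v)=a_{g-1}-d(v,g-1)$, i.e.
\[
f(g-1)=a_{g-1},\qquad f(0)=0,\qquad f(g-2)=a_{g-1}-a_{g-2},\qquad f(1)=2a_{g-1}-P,
\]
with $f(\ell_{g-1}^i)=a_{g-1}+b_{g-1}$ and $f(\ell_0^i)=-b_0$.

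This $f$ is $1$-Lipschitz because it is a shifted distance function on the spine and the leaves are pendant; in particular $|f(0)-f(1)|=|2a_{g-1}-P|\le P$. It is also tight on every plan row, because $g-2$ lies on the tree geodesic from $1$ to $g-1$, so $f(g-2)-f(1)=d(1,g-2)$. Pairing with $\rho$ then reproduces your plan cost $\alpha a_{g-1}+\eps\Psi_B$, and hence the formula with $-a_{g-2}$. For $g=3$ the offending pair cancels ($\rho(1)=0$), which is why $f=-t$ happens to work there. Without the tent potential, the lemma's main content, the negative coefficient of $a_{g-2}$ as an exact curvature value, is not established.
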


Note the \emph{minus} sign on $a_{g-2}$ in $\Psi_B$ (and on $a_2$ in
$\Psi_{B'}$): the geodesic-regime analogue (Lemma~\ref{lem:fmax}, balanced
case) has $+a_{g-2}$. The mechanism is exhibited in the proof of case (B) and
isolated in Remark~\ref{rem:mechanism}.

\begin{proof}
Each case exhibits the pair $(\pi,f)$ of \eqref{eq:duality} with matching
value $V$ and reads off $\kappa_\alpha=1-V/d(x,y)$. All distances are those of
Lemma~\ref{lem:cuttree}/\eqref{eq:coord}. Throughout, $\ell_v^1,\dots,
\ell_v^{d-2}$ denote the leaves at vertex $v$, and ``diagonal'' moves $z\to z$
cost $0$. Plan masses are nonnegative iff $\alpha-e\ge0$ in cases (A), (B),
(D-cycle) and $\alpha-\eps\ge0$ in the leaf cases; both hold for
$\alpha\ge\tfrac12$. In every case $V=\alpha\,d(x,y)+\eps\Psi$ with $\Psi$
independent of $\alpha$, so
$\kappa_\alpha=1-V/d(x,y)=\eps\bigl(1-\Psi/d(x,y)\bigr)$ is exactly linear in
$\eps$ and $\kappa=\kappa_\alpha/\eps=1-\Psi/d(x,y)$, proving the claimed
$\alpha$-independence; it remains, in each case, to exhibit $(\pi,f)$.

\medskip
\noindent\textbf{Case A: the heavy edge $e_0=\{x,y\}=\{0,1\}$,
$d(0,1)=P$.}\ \
The difference $\rho=\mu_0^\alpha-\mu_1^\alpha$ carries (chord adjacency
included)
\[
\rho(0)=\alpha-e,\quad \rho(1)=-(\alpha-e),\quad \rho(g-1)=+e,\quad
\rho(2)=-e,\quad \rho(\ell_0^i)=+e,\quad \rho(\ell_1^i)=-e .
\]
\emph{Primal plan} (from $\to$ to, mass, cost $=$ tree distance):
\[
\begin{array}{llll}
0\to1, & \alpha-e, & P & \text{(the opened spine)}\\
g-1\to2, & e, & t(g-1)-t(2)=P-a_{g-1}-a_1 & (=a_2+\dots+a_{g-2}\ge0)\\
\ell_0^i\to\ell_1^i, & e\ \ (1\le i\le d-2), & b_0+P+b_1 &\\
0\to0,\ 1\to1, & e\ \text{each}, & 0 .&
\end{array}
\]
The marginals are exactly $\mu_0^\alpha$ and $\mu_1^\alpha$ (the diagonal
moves absorb the chord masses $\mu_1^\alpha(0)=\mu_0^\alpha(1)=e$). Its cost
is
\[
V_A=(\alpha-e)P+e\,(P-a_{g-1}-a_1)+(d-2)e\,(P+b_0+b_1)
   =\alpha P+\eps\,\Psi_A .
\]
\emph{Dual potential}: the spine coordinate itself, McShane-extended to the
leaves,
\[
f=t\ \text{on}\ \{0,1,2,g-1\},\qquad f(\ell_0^i)=P+b_0,\qquad
f(\ell_1^i)=-b_1 .
\]
On cycle vertices $|f(i)-f(j)|=|t(i)-t(j)|=d(i,j)$, so every such pair is
$1$-Lipschitz \emph{with equality}. For a leaf of $0$ against a cycle vertex
$v$, $|f(\ell_0)-f(v)|=P+b_0-t(v)=d(0,v)+b_0=d(\ell_0,v)$ (as $t\le P$);
similarly $|f(\ell_1)-f(v)|=t(v)+b_1=d(\ell_1,v)$; and
$f(\ell_0)-f(\ell_1)=P+b_0+b_1=d(\ell_0,\ell_1)$, while leaves at the same vertex
share their value. So $f$ is $1$-Lipschitz on the support, every constraint
tight except the same-vertex pairs. Pairing with $\rho$,
\[
\langle f,\rho\rangle=(\alpha-e)\bigl(f(0)-f(1)\bigr)
+e\bigl(f(g-1)-f(2)\bigr)+e\sum_i\bigl(f(\ell_0^i)-f(\ell_1^i)\bigr)=V_A ,
\]
each bracket being the corresponding plan cost. By \eqref{eq:duality},
$W=V_A$ and $\kappa_\alpha=1-V_A/P=\eps(1-\Psi_A/P)$: formula (A).

\medskip
\noindent\textbf{Case B: $e_{g-1}=\{x,y\}=\{g-1,0\}$, $d(g-1,0)=a_{g-1}$
(geodesic).}\ \
Here the chord acts through the \emph{measure}: the neighbours of $y=0$ are
$g-1$, the leaves $\ell_0^i$, \emph{and the chord neighbour} $1$, which sits
at tree distance $d(0,1)=P$ on the far side of the spine. Thus
\[
\rho(g-1)=\alpha-e,\quad \rho(0)=-(\alpha-e),\quad \rho(g-2)=+e,\quad
\rho(1)=-e,\quad \rho(\ell_{g-1}^i)=+e,\quad \rho(\ell_0^i)=-e .
\]
\emph{Primal plan}:
\[
\begin{array}{llll}
g-1\to0, & \alpha-e, & a_{g-1} &\\
g-2\to1, & e, & t(g-2)-t(1)=P-a_{g-1}-a_{g-2} & (=a_1+\dots+a_{g-3}\ge0)\\
\ell_{g-1}^i\to\ell_0^i, & e, & b_{g-1}+a_{g-1}+b_0 &\\
g-1\to g-1,\ 0\to0, & e\ \text{each}, & 0 .&
\end{array}
\]
The second row is the re-routing: the surplus at $g-2$ must fill the deficit
at the chord neighbour $1$, and the tree geodesic from $g-2$ to $1$ runs down
the spine \emph{away} from the edge $\{g-1,0\}$, at cost
$P-a_{g-1}-a_{g-2}$ --- against the through-arc cost
$a_{g-2}+a_{g-1}+a_0$ of the geodesic regime. Total cost:
\begin{align*}
V_B&=(\alpha-e)a_{g-1}+e(P-a_{g-1}-a_{g-2})+(d-2)e\,(a_{g-1}+b_{g-1}+b_0)\\
   &=\alpha a_{g-1}+\eps\,\Psi_B .
\end{align*}
\emph{Dual potential}:
\[
f(g-1)=a_{g-1},\quad f(0)=0,\quad f(1)=2a_{g-1}-P,\quad
f(g-2)=a_{g-1}-a_{g-2},
\]
\[
f(\ell_{g-1}^i)=a_{g-1}+b_{g-1},\qquad f(\ell_0^i)=-b_0 .
\]
The core Lipschitz constraints (tree distances from \eqref{eq:coord}):
\[
\begin{array}{lll}
\{g-1,0\}: & f(g-1)-f(0)=a_{g-1}=d & \text{tight};\\
\{g-1,g-2\}: & f(g-1)-f(g-2)=a_{g-2}=d & \text{tight};\\
\{g-1,1\}: & f(g-1)-f(1)=P-a_{g-1}=d & \text{tight (the pinning)};\\
\{g-2,1\}: & f(g-2)-f(1)=P-a_{g-1}-a_{g-2}=d & \text{tight (re-routed)};\\
\{0,1\}: & |f(0)-f(1)|=|2a_{g-1}-P|\le P=d & \Longleftrightarrow\ 0\le
a_{g-1}\le P;\\
\{0,g-2\}: & |f(g-2)|=|a_{g-1}-a_{g-2}|\le a_{g-1}+a_{g-2}=d . &
\end{array}
\]
The slack constraints hold strictly in the regime ($0<a_{g-1}<P$ since
$P=a_1+\dots+a_{g-1}$ with positive weights). Every leaf constraint is
implied by the vertex ones: for a leaf $\ell$ at vertex $h$ with
$f(\ell)=f(h)\pm b_h$ and any support vertex $v$,
$|f(\ell)-f(v)|\le b_h+|f(h)-f(v)|\le b_h+d(h,v)=d(\ell,v)$; and
$f(\ell_{g-1})-f(\ell_0)=a_{g-1}+b_{g-1}+b_0=d(\ell_{g-1},\ell_0)$ is tight.
Pairing with $\rho$ uses exactly the tight differences of the plan rows:
\[
\langle f,\rho\rangle=(\alpha-e)a_{g-1}+e\,(P-a_{g-1}-a_{g-2})
+(d-2)e\,(a_{g-1}+b_{g-1}+b_0)=V_B .
\]
Hence $W=V_B$ and
$\kappa_\alpha=1-V_B/a_{g-1}=\eps(1-\Psi_B/a_{g-1})$: formula (B).

\emph{The pinning, explicitly.} The deficit $\rho(1)=-e$ pushes $f(1)$ down,
the surplus $\rho(g-2)=+e$ pushes $f(g-2)$ up; the pair constraint
$\{g-2,1\}$ caps the gap at $d(g-2,1)=P-a_{g-1}-a_{g-2}$, and the pair
$\{g-1,1\}$ caps the drop of $f$ from $x=g-1$ to $1$ at
$d(g-1,1)=P-a_{g-1}$, i.e.\ $f(1)\ge2a_{g-1}-P$; our potential saturates
both. In fact \emph{every} optimal potential satisfies
$f(g-2)-f(1)=d(g-2,1)$: the plan above is optimal (its cost equals the dual
value), it puts mass $e>0$ on $(g-2,1)$, and complementary slackness in
\eqref{eq:duality} forces equality there for every optimal $f$. Since the
outer masses enter the objective only through
$e\,\bigl(f(g-2)-f(1)\bigr)$, the surplus at $g-2$ is thus paid the
re-routed distance $P-a_{g-1}-a_{g-2}$ at every optimum --- against the
through-arc $a_{g-2}+a_{g-1}+a_0$ of the geodesic regime. For the exhibited
potential the pinning is visible pointwise:
$f(g-2)-f(g-1)=-a_{g-2}$, the potential \emph{decreasing} from $g-1$ to its
own outer neighbour $g-2$, whereas the optimal potential of the geodesic
regime \emph{increases} by $a_{g-2}$ there. This
is the sign flip $+a_{g-2}\rightsquigarrow-a_{g-2}$ between $F^B$
and $\Psi_B$.

\smallskip
\noindent\textbf{Case B$'$: $e_1=\{2,1\}$.}\ The reflection
$\sigma:j\mapsto1-j\pmod g$ is an automorphism of the underlying graph fixing
$e_0$ (hence $P$) and carrying the weighting $(a,b)$ to its mirror image, with
$(a_{g-1},a_{g-2},b_{g-1},b_0)$ at $e_{g-1}$ corresponding to
$(a_1,a_2,b_2,b_1)$ at $\sigma(e_{g-1})=e_1$. Applying case (B) in the
mirrored weighting yields formula
(B$'$) with the same certificate; explicitly, $x=2$, $y=1$, plan rows
$2\to1$ ($\alpha-e$, $a_1$), $3\to0$ ($e$, $P-a_1-a_2$),
$\ell_2^i\to\ell_1^i$ ($e$, $a_1+b_1+b_2$), and potential $f(2)=a_1$,
$f(1)=0$, $f(0)=2a_1-P$, $f(3)=a_1-a_2$, $f(\ell_2^i)=a_1+b_2$,
$f(\ell_1^i)=-b_1$.

\medskip
\noindent\textbf{Cases C, C$'$ and the D-leaves: a leaf edge
$\{v,\ell\}$.}\ \
Let $\ell$ be a leaf at vertex $v$, $d(v,\ell)=b_v$, and let $\delta_v$ be the
sum of the distances from $v$ to its two cycle neighbours:
\[
\delta_0=a_{g-1}+P,\qquad \delta_1=P+a_1,\qquad
\delta_v=a_{v-1}+a_v\ \ (2\le v\le g-1),
\]
the chord contributing its \emph{tree} distance $P$ at the vertices $0$ and $1$.
Since $\deg\ell=1$, $\mu_\ell^\alpha$ has mass $\alpha$ at $\ell$ and $\eps$
at $v$, so with $x=v$, $y=\ell$,
\[
\rho(v)=\alpha-\eps,\quad \rho(\ell)=-(\alpha-e),\quad
\rho(v_-)=\rho(v_+)=+e,\quad \rho(\ell')=+e\ \ (\ell'\ne\ell\
\text{leaf at}\ v),
\]
where $v_\pm$ are the two cycle neighbours of $v$. \emph{Primal plan}: send
everything to $\ell$ along geodesics ---
\[
v\to\ell\ (\alpha-\eps,\ b_v),\qquad v_\pm\to\ell\ (e,\ d(v_\pm,v)+b_v),
\qquad \ell'\to\ell\ (e,\ 2b_v),
\]
plus diagonals $v\to v$ ($\eps$) and $\ell\to\ell$ ($e$); cost
\[
V_\ell=(\alpha-\eps)b_v+e\,(\delta_v+2b_v)+(d-3)e\cdot2b_v
=\alpha b_v+e\bigl(\delta_v+(d-4)b_v\bigr).
\]
\emph{Dual potential}: $f=d(\,\cdot\,,\ell)$ on the support, i.e.\
$f(\ell)=0$, $f(v)=b_v$, $f(v_\pm)=d(v_\pm,v)+b_v$, $f(\ell')=2b_v$. This is
$1$-Lipschitz by the triangle inequality, and every plan row is tight since
each source $u$ moves to $\ell$ at cost exactly $f(u)-f(\ell)$; hence
$\langle f,\rho\rangle=V_\ell$ term by term, and $W=V_\ell$,
\[
\kappa_\alpha=1-\frac{V_\ell}{b_v}
=\eps\Bigl(1+\frac{-\delta_v+(4-d)b_v}{d\,b_v}\Bigr).
\]
With $v=0$ this is (C), with $v=1$ it is (C$'$), and with $2\le v\le g-1$ it
is the tree leaf formula of (D) --- for these vertices no support distance
involves the chord at all.

\medskip
\noindent\textbf{Case D, cycle edges $e_j=\{x,y\}=\{j,j+1\}$, $2\le j\le
g-2$.}\ \
The support is $\{j-1,j,j+1,j+2,\ell_j^i,\ell_{j+1}^i\}$ with
$\rho(j)=\alpha-e=-\rho(j+1)$, $\rho(j-1)=+e=-\rho(j+2)$,
$\rho(\ell_j^i)=+e=-\rho(\ell_{j+1}^i)$. All support distances are the plain
tree sums; in particular the outer pair has
\[
d(j-1,j+2)=a_{j-1}+a_j+a_{j+1}=:T_j
\]
--- in the cut tree the through-arc is the \emph{only} route, and indeed
$T_j\le P<\tfrac L2$, so even in $G$ the wrap through $e_0$ (length
$L-T_j\ge a_0$) is never shorter. This holds verbatim for the extreme cases:
for $j=2$ the support vertex $j-1=1$ is a chord endpoint, but each of its
support distances ($d(1,2)=a_1$, $d(1,4)=T_2$, \dots) runs down the spine away
from $0$ --- the chord route would cost at least $a_0>\tfrac L2>P\ge T_2$; for
$j=g-2$ the vertex $j+2=0$ is the spine end with $d(g-3,0)=T_{g-2}$.
The certificate is then the balanced-regime one
(Proposition~\ref{prop:balanced}(i)): plan
\[
j\to j+1\ (\alpha-e,\ a_j),\qquad j-1\to j+2\ (e,\ T_j),\qquad
\ell_j^i\to\ell_{j+1}^i\ (e,\ a_j+b_j+b_{j+1}),
\]
plus diagonals, of cost
$V_D=\alpha a_j+e\,[(d-2)a_j+a_{j-1}+a_{j+1}+(d-2)(b_j+b_{j+1})]$; potential
\[
f(j)=a_j,\quad f(j+1)=0,\quad f(j-1)=a_j+a_{j-1},\quad f(j+2)=-a_{j+1},
\]
$f(\ell_j^i)=a_j+b_j$, $f(\ell_{j+1}^i)=-b_{j+1}$ (the optimal potential of
the tree problem, feasible here because $d(j-1,j+2)=T_j$ exactly: its one
cycle-sensitive constraint $f(j-1)-f(j+2)=T_j\le d(j-1,j+2)$ is tight, all
other core pairs are tight sums, and the leaf constraints are implied as in
case (B)). Then $\langle f,\rho\rangle=V_D=W$, so with
$\Psi_D:=\tfrac1d[(d-2)a_j+a_{j-1}+a_{j+1}+(d-2)(b_j+b_{j+1})]$,
\[
\kappa_{e_j}\,a_j=a_j-\Psi_D
=\tfrac1d\bigl[2a_j-a_{j-1}-a_{j+1}-(d-2)(b_j+b_{j+1})\bigr]=F^B_j,
\]
the tree value, exactly as in the balanced regime.
\end{proof}

\begin{remark}\label{rem:mechanism}
All five formulas differ from their geodesic-regime counterparts only through
the two distances that the cut changes on the support:
$d(0,1)=P$ (in place of $a_0$) and, in case (B), the outer-pair distance
$d(g-2,1)=P-a_{g-1}-a_{g-2}$ (in place of the through-arc
$a_{g-2}+a_{g-1}+a_0$). The first replaces $a_0$ by $P$ in (A), (C), (C$'$);
the second reverses the orientation of the transport of the outer mass $e$
--- in the geodesic regime it crosses the edge, here it runs down the spine
--- and this reversal is what turns $+a_{g-2}$ into $-a_{g-2}$ in $\Psi_B$.
In particular no curvature involves $a_0$, which is the $a_0$-independence
used in the reduction of Proposition~\ref{prop:nongeo}.
\end{remark}

\begin{remark}\label{rem:g3app}
For $g=3$ the complementary arc is the path $1-2-0$ ($P=a_1+a_2$) and the two
``outer'' vertices coincide. The certificates specialise verbatim, with two
degeneracies. In (A), $g-1=2$ is also the outer neighbour of $1$, so
$\rho(2)=e-e=0$ and the outer row becomes the zero-cost diagonal $2\to2$;
since $a_{g-1}+a_1=P$, formula (A) remains valid as stated. In (B), the outer
neighbour $g-2=1$ coincides with the chord neighbour $1$, so $\rho(1)=0$; the
two prescribed potential values agree identically,
$2a_{g-1}-P=a_2-a_1=a_{g-1}-a_{g-2}$, and $\Psi_B$ collapses to
$\tfrac1d[(d-3)a_2+(d-2)(b_2+b_0)]$ (the $-a_{g-2}=-a_1$ cancels against the
$a_1$ inside $P$); likewise (B$'$). Cases (C), (C$'$) and the D-leaf at $2$
are unchanged. There are no D-cycle edges.
\end{remark}

\begin{remark}\label{rem:numerics}
The value identities $\langle f,\rho\rangle=V=\operatorname{cost}(\pi)$, the
$1$-Lipschitz property of each potential on every support pair, and the
cut-tree identity $d_G=d_T$ were additionally checked against the exact
transport linear program.
\end{remark}

\bibliographystyle{plain}
\bibliography{references}

@article{BCH,
  author  = {Bai, S. and Cheng, H. and Hua, B.},
  title   = {Discrete {E}instein metrics on trees},
  note    = {arXiv:2604.22449},
  year    = {2026},
}

@article{BCHsub,
  author  = {Bai, S. and Cheng, H. and Hua, B.},
  title   = {Monotonicity and limiting behavior of {E}instein trees {II}: edge subdivision},
  note    = {arXiv:2605.30949},
  year    = {2026},
}

@article{BCHleaf,
  author  = {Bai, S. and Cheng, H. and Hua, B.},
  title   = {Monotonicity and limiting behavior of {E}instein trees {I}: leaf attachment},
  note    = {arXiv:2605.23379},
  year    = {2026},
}

@article{ChengHua,
  author  = {Cheng, H.},
  title   = {A classification of nonnegative-curvature discrete {E}instein metrics on trees},
  note    = {arXiv:2605.20862},
  year    = {2026},
}

@article{LLY,
  author  = {Lin, Y. and Lu, L. and Yau, S.-T.},
  title   = {Ricci curvature of graphs},
  journal = {Tohoku Math. J.},
  volume  = {63},
  year    = {2011},
  pages   = {605--627},
}

@article{Ollivier,
  author  = {Ollivier, Y.},
  title   = {Ricci curvature of {M}arkov chains on metric spaces},
  journal = {J. Funct. Anal.},
  volume  = {256},
  year    = {2009},
  pages   = {810--864},
}

@article{LinYau,
  author  = {Lin, Y. and Yau, S.-T.},
  title   = {Ricci curvature and eigenvalue estimate on locally finite graphs},
  journal = {Math. Res. Lett.},
  volume  = {17},
  year    = {2010},
  pages   = {343--356},
}

@article{LLYflat,
  author  = {Lin, Y. and Lu, L. and Yau, S.-T.},
  title   = {Ricci-flat graphs with girth at least five},
  journal = {Comm. Anal. Geom.},
  volume  = {22},
  year    = {2014},
  pages   = {671--687},
}

@article{BLYflat,
  author  = {Bai, S. and Lu, L. and Yau, S.-T.},
  title   = {Ricci-flat graphs with maximum degree at most four},
  JOURNAL = {Asian J. Math.},
  FJOURNAL = {Asian Journal of Mathematics},
    VOLUME = {25},
      YEAR = {2021},
    NUMBER = {6},
     PAGES = {757--813}
}

@article{CLP,
  author  = {Cushing, D. and Liu, S. and Peyerimhoff, N.},
  title   = {Bakry--{{\'E}}mery curvature functions on graphs},
  journal = {Canad. J. Math.},
  volume  = {72},
  year    = {2020},
  pages   = {89--143},
}

@article{DevriendtLambiotte,
  author  = {Devriendt, K. and Lambiotte, R.},
  title   = {Discrete curvature on graphs from the effective resistance},
  journal = {J. Phys. Complex.},
  volume  = {3},
  year    = {2022},
  pages   = {025008},
}

@article{MW,
  author  = {M\"unch, F. and Wojciechowski, R. K.},
  title   = {Ollivier {R}icci curvature for general graph {L}aplacians: heat equation, {L}aplacian comparison, non-explosion and diameter bounds},
  journal = {Adv. Math.},
  volume  = {356},
  year    = {2019},
  pages   = {106759},
}

@misc{LiLu,
  author  = {Li, Y. and Lu, L.},
  title   = {Ricci curvature formula: applications to {B}onnet--{M}yers sharp irregular graphs},
  note    = {preprint, arXiv:2409.15667},
  year    = {2024},
}

@article{JostLiu,
  author  = {Jost, J. and Liu, S.},
  title   = {Ollivier's {R}icci curvature, local clustering and curvature-dimension inequalities on graphs},
  journal = {Discrete Comput. Geom.},
  volume  = {51},
  year    = {2014},
  pages   = {300--322},
}

@article{BauerJostLiu,
  author  = {Bauer, F. and Jost, J. and Liu, S.},
  title   = {Ollivier-{R}icci curvature and the spectrum of the normalized graph {L}aplace operator},
  journal = {Math. Res. Lett.},
  volume  = {19},
  year    = {2012},
  pages   = {1185--1205},
}

@article{BCLMP,
  author  = {Bourne, D. P. and Cushing, D. and Liu, S. and M\"unch, F. and Peyerimhoff, N.},
  title   = {Ollivier--{R}icci idleness functions of graphs},
  journal = {SIAM J. Discrete Math.},
  volume  = {32},
  year    = {2018},
  pages   = {1408--1424},
}

@article{vanderHoorn,
  author  = {van der Hoorn, P. and Cunningham, W. J. and Lippner, G. and Trugenberger, C. and Krioukov, D.},
  title   = {Ollivier-{R}icci curvature convergence in random geometric graphs},
  journal = {Phys. Rev. Research},
  volume  = {3},
  year    = {2021},
  pages   = {013211},
}

@inproceedings{NiLGGS,
  author    = {Ni, C.-C. and Lin, Y.-Y. and Gao, J. and Gu, X. D. and Saucan, E.},
  title     = {Ricci curvature of the {I}nternet topology},
  booktitle = {2015 {IEEE} Conference on Computer Communications ({INFOCOM})},
  publisher = {IEEE},
  year      = {2015},
  pages     = {2758--2766},
}

@article{BLLWY,
  author  = {Bai, S. and Lin, Y. and Lu, L. and Wang, Z. and Yau, S.-T.},
  title   = {Ollivier {R}icci-flow on weighted graphs},
  journal = {Amer. J. Math.},
  year    = {2024},
}

@article{BHLL,
  author  = {Bai, S. and Hua, B. and Lin, Y. and Liu, S.},
  title   = {On the {R}icci flow on trees},
  note    = {arXiv:2509.22140},
  year    = {2025},
}

@article{bai2021ricci,
  author  = {Bai, S. and Huang, A. and Lu, L. and Yau, S.-T.},
  title   = {On the sum of {R}icci-curvatures for weighted graphs},
  journal = {Pure Appl. Math. Q.},
  volume  = {17},
  year    = {2021},
  pages   = {1599--1617},
}

@book{Teschl,
  author    = {Teschl, G.},
  title     = {Jacobi Operators and Completely Integrable Nonlinear Lattices},
  series    = {Math. Surveys Monogr.},
  volume    = {72},
  publisher = {Amer. Math. Soc.},
  address   = {Providence, RI},
  year      = {2000},
}

\end{document}